\def\namedlabel#1#2{\begingroup
  #2%
  \def\@currentlabel{#2}%
  \phantomsection\label{#1}\endgroup
}
\def\balpha{{\bs{\alpha}}}
\def\bbeta{{\bs{\beta}}}
\def\bgamma{{\bs{\gamma}}}
\def\snhx{s_{n,h}^{}(\x)}
\def\shx{s_h^{}(\x)}
\def\bsnhx{\mathfrak{s}_{n,h}^{}(\x)}
\def\bshx{\mathfrak{s}_h^{}(\x)}
\def\bSnhx{\S_{n,h}^{}(\x)}
\def\bShx{\S_h^{}(\x)}
\def\bqnhx{\hat{\bs{q}}_{n,h}^{}(\x)}
\def\bQnhx{\hat{\bs{Q}}_{n,h}^{}(\x)}
\def\munhx{\hat{\bmu}_{n,h}^{}(\x)}
\def\Sigmanhx{\hat{\bSigma}_{n,h}^{}(\x)}
\def\Rddsym{\R^{d\times d}_{\mathtt{sym}}}
\def\Bvec{\vec{\B}}
\def\Gvec{\vec{\G}}
\def\Hvec{\vec{\H}}
\def\Lvec{\vec{\bs{L}}}
\def\Wvec{\vec{\W}}
\def\Zvec{\vec{\Z}}
\def\Bvechx{\vec{\B}_h^{}(\x)}
\def\Yvecnhx{\vec{\Y}_{\!\!n,h}^{}(\x)}
\newcounter{overview}
\begin{document}
\title{Local Estimation of a Multivariate Density and its Derivatives\footnote{This work was supported by Swiss National Science Foundation. It is part of the first author's Phd dissertation.}}
\author{Christof Str\"ahl, Johanna F.~Ziegel and  Lutz D\"umbgen\\
	University of Bern, Switzerland}
\date{\today}
\maketitle

\begin{abstract}
We analyze four different approaches to estimate a multivariate probability density (or the $\log$-density) and its first and second order derivatives. Two methods, local log-likelihood and local Hyv\"arinen score estimation, are in terms of weighted scoring rules with local quadratic models. The other two approaches are matching of local moments and kernel density estimation. All estimators depend on a general kernel, and we use the Gaussian kernel to provide explicit examples. Asymptotic properties of the estimators are derived and compared. In terms of rates of convergence, a refined local moment matching estimator is the best.
\end{abstract}

\newpage

\setcounter{tocdepth}{2}
\tableofcontents

\section{Introduction}
\label{sec:introduction}

We consider independent random vectors $\X_1, \X_2, \ldots, \X_n$ with distribution $P$ given by a density function $f: \R^d \to [0, \infty)$. Our goal is to estimate the density function $f(\x)$, the gradient $Df(\x)$ and the Hessian matrix $D^2f(\x)$ at arbitrary points $\x \in \{f > 0\}$.

The gradient can be used for mode estimation. For instance, the mean shift
algorithm in \citet{Cheng1995} uses a gradient ascent method for
approaching the mode and for finding points where the gradient $D f$
is equal to zero. Together, the gradient and Hessian matrix can be used to
find density ridges, see for instance \citet{Genovese2014} and \citet{Polonik2016}. A point $\x$ is on an $s$-dimensional ridge whenever the $s$ smallest eigenvalues of the Hessian $D^2f(\x)$ are negative and the
corresponding eigenvectors are orthogonal to the Gradient $D f(\x)$; see
\citet{Genovese2014}. Modes can be seen as $0$-dimensional ridges. Ridges
are not only defined for density functions but for arbitrary functions on
$\R^d$ that are sufficiently differentiable.

The ridges of a function and of a strictly increasing transformation
thereof coincide; see \citet{OzertemETAL2011}. Therefore, in view of
estimating density ridges, one can consider estimates of $f$ or $\log f$.

The four estimation paradigms considered in this paper are local moment
matching (\texttt{M}), kernel density (\texttt{K}), local log-likelihood
(\texttt{L}) and local Hyv\"arinen score (\texttt{H}) estimation. Locality
is always achieved by using a kernel function, which has to satisfy some
high level assumptions that we state in Section
\ref{sec:local-estimation-f}. As an explicit example we consider the case
of the Gaussian kernel.

The local moment matching estimator (\texttt{M}) in Section \ref{sec:MM} matches empirical local moments with the Taylor expansion of their theoretical counterparts, where local means that the observations are weighted with a suitable kernel function. This approach has been introduced by \citet{Mueller2001} for rectangular kernels. In general, it leads to a system of linear equations and estimators with interesting asymptotic properties.

The kernel density estimator (\texttt{K}) in Section \ref{sec:KD} is the average of a kernel function recentered at the sample points $\X_1, \X_2,  \ldots, \X_n$, see \citet{WandJones1995} for a general introduction. As estimators for $Df$ and $D^2f$, the corresponding derivatives of the kernel density estimator are considered.

The other two estimators are both based on localizing proper scoring rules;
see \citet{HolzmannKlar2017}. Using the logarithmic score of
\citet{Good1952} leads to a local log-likelihood estimator (\texttt{L}). This
estimator was already considered by \citet{Loader1996}, and he proved
asymptotic results in the case of kernel functions with compact support. In
general, the estimator can only be calculated numerically. Using the score
of \citet{Hyvaerinen2005} leads to a new estimator (\texttt{H}) of the first and second derivatives of the log-density which is given by a system of linear equations. In case of the standard Gaussian kernel, this method coincides with the local log-likelihood method.

The remainder of this manuscript is structured as follows: The four estimation paradigms are introduced in Section~\ref{sec:local-estimation-f}. The asymptotic properties of the estimators (\texttt{M}), (\texttt{K}) and (\texttt{L}) are presented in Section~\ref{sec:asymptotics}. We show that the three paradigms are closely related and discuss their advantages and disadvantages.  We refrain from studying the asymptotics of the (\texttt{H}) estimator since it coincides with the (\texttt{L}) estimator in case of the Gaussian kernel. For general kernels the analysis is highly involved; see the remark in Section~\ref{sec:asymptotics} for more details. Section~\ref{sec:LS} provides some general theory of localizing scoring rules and is of independent interest. Most proofs and some auxiliary results are contained in Appendix~\ref{app}.

\section{Local Estimation of $f$ and its Derivatives}
\label{sec:local-estimation-f}

\subsection{Preliminaries}
\label{sec:preliminaries}

By $\cC^L(\R^d)$ we denote the space of all functions $\R^d \rightarrow \R$
such that all partial derivatives of order $L$ exist and are
continuous. The set of all functions $g \in \cC^L(\R^d)$ such that all
partial derivatives of order $L$ are bounded is denoted by
$\cC^L_b(\R^d)$. For a multi-index $\balpha = (\alpha_1, \ldots, \alpha_d) \in \N_0^d$ we write $\lvert\balpha\rvert \defeq
\sum_{j=1}^d \alpha_j$ and
\begin{displaymath}
  g^{(\balpha)}(\x) \defeq \frac{\partial^{\lvert\balpha
\rvert}}{\partial x_1^{\alpha_1} \partial x_2^{\alpha_2} \cdots
    \partial x_d^{\alpha_d}} g(\x).
\end{displaymath}
The space of all Lebesgue integrable functions on $\R^d$ is denoted by $\mathcal{L}^1(\R^d)$. By $\lVert \cdot \rVert$ we denote the
Euclidean norm on $\R^d$.

For the density function $f$, we always assume that $f \in \cC_b^L(\R^d)$ for some $L \geq 2$. Note that for any function $g \in \mathcal{L}^1(\R^d) \cap  \cC^L_b(\R^d)$, we have that $g^{(\bbeta)}(\x) \to 0$ as $\lVert \x \rVert \to \infty$, whenever $|\bbeta| < L$; see Theorem \ref{thm:smooth.tails}.
In particular, $f \in \mathcal{L}^1(\R^d) \cap  \cC^L_b(\R^d)$, and we will use the property of lower order derivatives vanishing at infinity repeatedly throughout the paper.

\paragraph{The kernel function.}
All estimators involve a bounded kernel function $K: \R^d \rightarrow [0, \infty)$ such that $\int K(\z) \|\z\|^r \, d\z < \infty$ for arbitrary $r > 0$. Additional assumptions are:
\begin{itemize}[leftmargin=.75in]
\item[(K0)] $\displaystyle\int K(\z) \, dz = 1$.
\item[(K1)] $K$ is sign- and permutation-symmetric, i.e.
      \begin{displaymath} K(z_1, z_2, \ldots, z_d) = K(\xi_1
      z_{\sigma(1)}, \xi_2 z_{\sigma(2)}, \ldots, \xi_d z_{\sigma(d)})
    \end{displaymath} for all $\z \in \R^d$, $\bs{\xi} \in \{-1, 1\}^d$
    and $\sigma \in \mathcal{S}_d$, where $\mathcal{S}_d$ is the set of permutations on $\{1, 2,
    \ldots, d\}$.
\item[(K2)] $\displaystyle\int K(\z) \z \z^\top \, d\z = \I_d$,
	the identity matrix in $\R^d$.
\end{itemize}
Depending on the estimation paradigm, we may impose further conditions on
$K$. In what follows, we write $\z^\balpha \defeq \prod_{j=1}^d z_j^{\alpha_j}$for $\z \in \R^d$ and
$\balpha \in \N_0^d$. The $\balpha$-th moment of $K$ is defined as
\begin{displaymath}
	\mu_\balpha \defeq \int K(\z) \z^\balpha \, d\z .
\end{displaymath}
Note that condition (K1) implies that
\begin{equation}
\label{eq:moments.symmetry.K}
  \mu_\balpha = 0,
  \quad\text{whenever} \ \balpha \not\in 2\N_0^d,
\end{equation}
and that $\mu_\balpha$ is invariant under permutations of the components of $\balpha$. Hence, we can often rewrite expressions involving various moments $\mu_\balpha$ in terms of a few special moments, namely,
\begin{align*}
	\mu_a &\defeq \mu_{a\e_1} = \int K(\z) z_1^a \, d\z
		\quad\text{for} \ a \ge 0 ,\\
	\mu_{ab} &\defeq \mu_{a\e_1 + b\e_2} = \int K(\z) z_1^a z_2^b \, d\z
		\quad\text{for} \ a \ge b > 0 , \\
	\mu_{abc} &\defeq \mu_{a\e_1 + b\e_2 + c\e_3} = \int K(\z) z_1^a z_2^b z_3^c \, d\z
		\quad\text{for} \ a \ge b \ge c > 0 \quad (\text{if} \ d \ge 3) ,
\end{align*}
where $\e_1,\ldots,\e_d$ denotes the standard basis of $\R^d$. In particular, if (K1) is satisfied, then condition~(K2) is equivalent to $\mu_2 = 1$.

Condition~(K1) is obviously satisfied if $K$ is spherically symmetric in the sense that $K(\z) = \kappa(\|\z\|)$ for some function $\kappa : [0,\infty) \to [0,\infty)$. A particular choice for $K$ is the standard Gaussian
density $\varphi$,
\begin{displaymath}
  \varphi(\z) \defeq (2\pi)^{-d/2} \exp(-\lVert\z\rVert^2 / 2) ,
\end{displaymath}
which satisfies conditions (K0--2). We restrict our attention to scalar bandwidths rather than matrix-valued ones as in \citet{ChaconETAL2011}. The main reason, besides simplicity, is that a guiding motivation for the analysis in this paper is the estimation of ridges. Density ridges are not equivariant under arbitrary affine transformations of the coordinate system, so these structures are not necessarily preserved using matrix-valued bandwidths.

\subsection{Local Moment Matching}
\label{sec:MM}

\paragraph{Local moments.}
The concept of local moments already appears in \citet{Mueller2001} with a
different normalization and a rectangular kernel only. For our general
definition of local moments we need rescaled versions of $K$. For any
bandwidth $h > 0$ we write $K_h(\z) \defeq h^{-d} K(h^{-1} \z)$.

\begin{defi}
\label{def:local-sample-moments}
For $\balpha \in \N_0^d$ and $\x \in \R^d$, the local $\balpha$-th sample moment at $\x$ with bandwidth $h > 0$ is 
\begin{displaymath}
	s_{n,h}^\balpha(\x)
	\defeq \frac{1}{n} \sum_{i=1}^n K_h(\X_i - \x)
		(h^{-1}(\X_i - \x))^\balpha.
\end{displaymath}
\end{defi}

These local sample moments lead to estimators of the density $f(\x)$ and
partial derivatives thereof. The main ingredient is the next lemma about
the relation between the expectation of local moments and partial
derivatives of $f$. In what follows, $\bgamma \in \N_0^d$ denotes a generic multi-index, and $\bgamma! := \prod_{j=1}^d \gamma_j!$.

\begin{lem}
  \label{lem:LM-Taylor}
For a function $F: \R^d \rr \R$ with $\int \lvert F(\z) \rvert (1 +  \lVert\z\rVert^L) \, d\z < \infty$, we have 
\begin{displaymath}
  \int F(\z) f(\x + h\z) \, d\z = \sum_{\lvert\bgamma\rvert \leq L} h^{|\bgamma|}
    \frac{f^{(\bgamma)}(\x)}{\bgamma !} \int F(\z)
      \z^\bgamma \, d\z +
      \begin{cases}
        \O(h^L)& \text{uniformly in } \x, \\
        \o(h^L)& \text{as } h \rightarrow 0, \ \text{locally uniformly in
        } \x.
      \end{cases}
\end{displaymath}
In particular, for any $\balpha \in \N_0^d$,
\begin{displaymath}
  s_h^\balpha(\x) \defeq \Ex\bigl( s_{n,h}^\balpha(\x) \bigr)
  = \sum_{\lvert\bgamma\rvert \leq L} h^{\lvert\bgamma\rvert}
  \frac{f^{(\bgamma)}(\x)}{\bgamma!} \, \mu_{\balpha + \bgamma}
  +
  \begin{cases}
    \O(h^L)& \text{uniformly in } \x, \\
    \o(h^L)& \text{as } h \rightarrow 0, \ \text{locally uniformly in } \x.
  \end{cases}
\end{displaymath}
\end{lem}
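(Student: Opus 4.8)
The plan is to prove the integral identity first and then derive the statement about $s_h^\balpha(\x)$ as an immediate special case. For the main identity, I would substitute the Taylor expansion of $f$ around $\x$ into the left-hand side. Since $f \in \cC_b^L(\R^d)$, for each fixed $\x$ and $\z$ we have the Taylor formula with integral (or Lagrange) remainder,
\begin{displaymath}
  f(\x + h\z) = \sum_{|\bgamma| < L} h^{|\bgamma|} \frac{f^{(\bgamma)}(\x)}{\bgamma!}\, \z^\bgamma
    + \sum_{|\bgamma| = L} h^{L} \frac{\z^\bgamma}{\bgamma!} \, f^{(\bgamma)}(\x + \theta_{\x,\z,h} h \z)
\end{displaymath}
for some $\theta_{\x,\z,h} \in [0,1]$; rewriting the last sum as the leading term $\sum_{|\bgamma|=L} h^L f^{(\bgamma)}(\x) \z^\bgamma / \bgamma!$ plus a correction involving $f^{(\bgamma)}(\x + \theta h\z) - f^{(\bgamma)}(\x)$ gives exactly the claimed finite sum over $|\bgamma| \le L$ plus a remainder term
\begin{displaymath}
  R_h(\x) = \int F(\z) \sum_{|\bgamma| = L} \frac{h^L \z^\bgamma}{\bgamma!}
    \bigl( f^{(\bgamma)}(\x + \theta_{\x,\z,h} h\z) - f^{(\bgamma)}(\x) \bigr) \, d\z .
\end{displaymath}
Multiplying by $F(\z)$ and integrating is justified by the hypothesis $\int |F(\z)|(1 + \|\z\|^L)\,d\z < \infty$ together with the boundedness of all partial derivatives of order $\le L$, which dominates every term in the expansion.

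It then remains to bound $R_h(\x)$. Since $f^{(\bgamma)}$ is bounded for $|\bgamma| = L$, the difference $f^{(\bgamma)}(\x + \theta h\z) - f^{(\bgamma)}(\x)$ is bounded by $2\|f^{(\bgamma)}\|_\infty$, so $|R_h(\x)| \le C h^L \int |F(\z)| \|\z\|^L \, d\z = \O(h^L)$ uniformly in $\x$, giving the first case. For the second case, fix a compact set; I would split the $\z$-integral into $\|\z\| \le M$ and $\|\z\| > M$. On the tail, $\int_{\|\z\| > M} |F(\z)|(1+\|\z\|^L)\,d\z$ can be made arbitrarily small by choosing $M$ large, uniformly over the compact set of $\x$'s. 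On the bounded part $\|\z\| \le M$, uniform continuity of each $f^{(\bgamma)}$ on a suitable compact enlargement of the $\x$-set forces $\sup_{\|\z\| \le M} |f^{(\bgamma)}(\x + \theta h\z) - f^{(\bgamma)}(\x)| \to 0$ as $h \to 0$, uniformly in $\x$ over the compact set. Combining the two pieces yields $R_h(\x) = \o(h^L)$ as $h \to 0$, locally uniformly in $\x$.

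Finally, the statement about $s_h^\balpha(\x)$ follows by applying the identity to $F(\z) = K(\z)\z^\balpha$: the growth condition $\int |F(\z)|(1+\|\z\|^L)\,d\z < \infty$ holds because $K$ has finite moments of all orders, and by the change of variables $\z = h^{-1}(\x_1 - \x)$ one checks $\Ex\bigl(s_{n,h}^\balpha(\x)\bigr) = \Ex\bigl(K_h(\X_1 - \x)(h^{-1}(\X_1-\x))^\balpha\bigr) = \int K(\z)\z^\balpha f(\x + h\z)\,d\z$; then $\int F(\z)\z^\bgamma\,d\z = \mu_{\balpha + \bgamma}$ by definition of the moments of $K$. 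I expect the only genuinely delicate point to be the $\o(h^L)$, locally-uniform claim — specifically, making the tail-versus-core splitting argument fully uniform over a compact set of $\x$ — while the $\O(h^L)$ bound and the specialization to $s_h^\balpha$ are routine.
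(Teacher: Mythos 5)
Your proposal is correct and follows essentially the same route as the paper: Taylor expansion with Lagrange remainder, regrouping to extract the $|\bgamma|\le L$ sum with a remainder involving $f^{(\bgamma)}(\x+\theta h\z)-f^{(\bgamma)}(\x)$, the uniform $\O(h^L)$ bound from boundedness of $L$-th order derivatives and $\int|F|\,\|\z\|^L<\infty$, and the tail-versus-core split with uniform continuity on compact sets for the locally uniform $\o(h^L)$. The specialization to $F(\z)=K(\z)\z^\balpha$ is likewise identical.
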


\noindent
The (local) uniformity in this lemma means that $O(h^L)$ and $o(h^L)$ stand
for a remainder $h^L R(\x,h)$, where $R(\cdot, \cdot)$ is bounded, and
$\sup_{\lVert\x\rVert \le B} |R(\x,h)| \rightarrow 0$ as $h \rightarrow 0$
for any fixed $B > 0$, respectively. Note that Lemma~\ref{lem:LM-Taylor} is true for arbitrary kernels $K \ge 0$ such that $\int K(\z) (1 + \lVert\z\rVert^{\lvert\balpha\rvert + L}) \, d\z$ is finite.

\paragraph{Local moment matching.}
The basic idea of local moment matching is to determine for a given multi-index $\balpha$ with $|\balpha| \le 2$ a polynomial $p_\balpha$ of order $L$ such that
\[
	\hat{f^{(\balpha)}_{n,h}}(\x)
	\ := \ \frac{1}{nh^{|\balpha|}} \sum_{i=1}^n K_h(\X_i - \x) p_\balpha(h^{-1}(\X_i - \x))
\]
satisfies
\[
	\Ex \hat{f^{(\balpha)}_{n,h}}(\x) \ = \ f^{(\balpha)}(\x) + o(h^{L - |\balpha|}) .
\]
Indeed, it follows from Lemma~\ref{lem:LM-Taylor} that
\[
	\Ex \hat{f^{(\balpha)}_{n,h}}(\x)
	\ = \ \sum_{|\bgamma| \le L} h^{|\bgamma| - |\balpha|}
		\frac{f^{(\bgamma)}(\x)}{\bgamma!}
		\int K(\z) p_\balpha(\z) \z^\bgamma \, d\z + o(h^{L - |\balpha|}) ,
\]
and this leads to the linear equation system
\begin{equation}
\label{eq:p_alpha.L}
	\int K(\z) p_\balpha(\z) \z^\bgamma \, d\z
	\ \stackrel{!}{=} \ \one\{\bgamma = \balpha\} \balpha!
	\quad\text{whenever} \ |\bgamma| \le L .
\end{equation}
It is also instructive to interpret Lemma~\ref{lem:LM-Taylor} in terms of vector-valued first and matrix-valued second moments. In what follows, let
\begin{align*}
	\snhx \
	&\defeq \ s_{n,h}^{\bs{0}}(\x)
    	\ = \ \frac{1}{n} \sum_{i=1}^n K_h(\X_i - \x)
		\ \in \R, \\
	\bsnhx \
	&\defeq \ \bigl( s_{n,h}^{\e_j}(\x) \bigr)_{j=1}^d
		\ = \ \frac{1}{n} \sum_{i=1}^n K_h(\X_i - \x) h^{-1}(\X_i - \x)
		\ \in \R^d,\\
	\bSnhx \
	&\defeq \
		\bigl( s_{n,h}^{\e_j + \e_k}(\x) \bigr)_{j,k = 1}^d
		\ = \ \frac{1}{n} \sum_{i=1}^n K_h(\X_i - \x) h^{-2} (\X_i - \x)(\X_i - \x)^\top
		\ \in \Rddsym .
\end{align*}
Here, $\z^\top$ denotes the transpose of the vector $\z$ and $\Rddsym$
denotes the space of symmetric $(d \times d)$-matrices.
The triple $\bigl( \snhx, \bsnhx, \bSnhx \bigr)$ lies in the space
\[
	\Hds \defeq \R \times \R^d \times \Rddsym .
\]
With elementary calculations one can deduce from Lemma~\ref{lem:LM-Taylor} the following result:

\begin{cor}
\label{cor:LM-Taylor}
Under conditions (K0--2), the theoretical local moments $\shx \defeq \Ex(\snhx)$, $\bshx \defeq \Ex(\bsnhx)$ and $\bShx \defeq \Ex(\bSnhx)$ satisfy
\[
	\bigl( \shx, \bshx, \bShx \bigr)
	\ = \ \bs{J} \bigl( f(\x), hDf(\x), h^2D^2f(\x) \bigr) + o(h^2)
\]
as $h \to 0$, locally uniformly in $\x$. Here $\bs{J} : \Hds \to \Hds$ is the linear operator given by
\begin{align*}
	\bs{J} (c, \b, \A) \
	:=& \ \int K(\z) (1, \z, \z\z^\top) (c + \b^\top \z + 2^{-1} \z^\top\A\z) \, d\z \\
	=& \ \bigl( c + 2^{-1} \tr(\A), \,
		\b, \,
		c \I_d + \A \odot \M + 2^{-1} \mu_{22} \tr(\A) \I_d \bigr) ,
\end{align*}
where $\odot$ stands for componentwise multiplication, and $\M$ is the matrix with positive entries
\[
	M_{jk} \ := \ \begin{cases}
		(\mu_4 - \mu_{22})/2 & \text{if} \ j = k , \\
		\mu_{22} & \text{if} \ j \ne k .
	\end{cases}
\]
\end{cor}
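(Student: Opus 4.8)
The plan is to reduce everything to Lemma~\ref{lem:LM-Taylor} and then repackage the resulting second-order Taylor coefficients into the operator $\bs{J}$. The components of the triple $\bigl(\snhx,\bsnhx,\bSnhx\bigr)$ are exactly the local sample moments $s_{n,h}^\balpha(\x)$ with $|\balpha|\le2$, so I would apply Lemma~\ref{lem:LM-Taylor} componentwise to $F(\z)=K(\z)\z^\balpha$; this is admissible since $\int K(\z)\|\z\|^r\,d\z<\infty$ for every $r>0$, whence $\int|F(\z)|(1+\|\z\|^L)\,d\z<\infty$. Collecting components, one obtains, locally uniformly in $\x$,
\[
	\bigl(\shx,\bshx,\bShx\bigr)
	\ = \ \sum_{|\bgamma|\le L} h^{|\bgamma|}\,\frac{f^{(\bgamma)}(\x)}{\bgamma!}\int K(\z)\,(1,\z,\z\z^\top)\,\z^\bgamma\,d\z \ + \ o(h^L) .
\]
Because $f\in\cC^L_b(\R^d)$ with $L\ge2$, the coefficients $f^{(\bgamma)}(\x)/\bgamma!$ are bounded and the moments of $K$ are finite, so the contributions with $3\le|\bgamma|\le L$ are $O(h^3)=o(h^2)$ uniformly in $\x$ and may be absorbed into the remainder. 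What survives is the sum over $|\bgamma|\le2$, which equals $\int K(\z)(1,\z,\z\z^\top)\bigl(f(\x)+h\,Df(\x)^\top\z+2^{-1}h^2\,\z^\top D^2f(\x)\,\z\bigr)\,d\z$, i.e.\ the integral against $K$ of $(1,\z,\z\z^\top)$ times the second-order Taylor polynomial of $f$ about $\x$ evaluated at $\x+h\z$. By the first line of the definition of $\bs{J}$ this integral is exactly $\bs{J}\bigl(f(\x),hDf(\x),h^2D^2f(\x)\bigr)$, which establishes the asserted expansion with an $o(h^2)$ remainder, locally uniformly in $\x$.

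It then remains to verify the explicit, entrywise formula for $\bs{J}(c,\b,\A)$. Here I would simply expand $(1,\z,\z\z^\top)(c+\b^\top\z+2^{-1}\z^\top\A\z)$ and integrate term by term, using (K0) for $\int K=1$, (K2) for $\int K(\z)\z\z^\top\,d\z=\I_d$, and the parity relation~\eqref{eq:moments.symmetry.K} to annihilate every $\int K(\z)\z^\bbeta\,d\z$ with an odd component of $\bbeta$. In the scalar component the $\z$-linear term drops and $2^{-1}\int K(\z)\z^\top\A\z\,d\z=2^{-1}\tr(\A\I_d)=2^{-1}\tr(\A)$, giving $c+2^{-1}\tr(\A)$. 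In the vector component all cubic moments vanish and $\int K(\z)\z\z^\top\b\,d\z=\b$. In the matrix component the $c$-term contributes $c\I_d$, the cubic term vanishes, and for the quartic term $2^{-1}\int K(\z)z_jz_k(\z^\top\A\z)\,d\z$ one uses that $\int K(\z)z_jz_kz_\ell z_m\,d\z$ is nonzero only when the indices coincide in pairs: for $j=k$ this forces $\ell=m$ and yields $\mu_4$ if $\ell=j$ and $\mu_{22}$ otherwise, while for $j\ne k$ it forces $\{\ell,m\}=\{j,k\}$ and yields $\mu_{22}$ (with two orderings). Rearranging these cases, using symmetry of $\A$ and the permutation invariance that reduces all surviving moments to $\mu_2=1$, $\mu_4$, $\mu_{22}$, turns $2^{-1}\int K(\z)\z\z^\top(\z^\top\A\z)\,d\z$ into $\A\odot\M+2^{-1}\mu_{22}\tr(\A)\I_d$ with $\M$ as in the statement, completing the identification.

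I do not expect a genuine obstacle. The only delicate points are the bookkeeping of which moments $\mu_{\balpha+\bgamma}$ (equivalently, which integrals $\int K(\z)z_jz_kz_\ell z_m\,d\z$) survive — all governed by parity via~\eqref{eq:moments.symmetry.K} — the reduction of the survivors to $\mu_2=1$, $\mu_4$ and $\mu_{22}$ by permutation invariance, and the routine check that the $|\bgamma|\ge3$ terms are uniformly $o(h^2)$ so that the local uniformity in Lemma~\ref{lem:LM-Taylor} is preserved.
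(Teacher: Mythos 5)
Your proposal is correct and takes exactly the route the paper indicates: the paper gives no separate proof of the corollary but states that it follows "with elementary calculations" from Lemma~\ref{lem:LM-Taylor}, which is precisely your plan of applying that lemma componentwise, repackaging the $|\bgamma|\le 2$ coefficients into $\bs{J}$, and then doing the moment bookkeeping (parity via~\eqref{eq:moments.symmetry.K}, (K0), (K2), and reduction of the surviving quartic moments to $\mu_4$ and $\mu_{22}$) to obtain the entrywise formula. The only stylistic remark is that you could avoid the step of absorbing the $3\le|\bgamma|\le L$ terms by simply invoking Lemma~\ref{lem:LM-Taylor} with $L=2$ (legitimate since $f\in\cC^L_b\subset\cC^2_b$), which yields the $o(h^2)$ remainder locally uniformly in one stroke; but your version is equally valid.
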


The linear operator $\bs{J}$ will appear in connection with local log-likelihood methods, too. The next lemma shows that it is invertible.

\begin{lem}
\label{lem:Inverse.J}
The linear operator $\bs{J}$ in Corollary~\ref{cor:LM-Taylor} is nonsingular with inverse $\bs{J}^{-1}$ given by
\begin{equation}
\label{eq:Inverse.L}
	\bs{J}^{-1}(c,\b,\A) \
	= \ \Bigl( c - \eta^{-1} \tr(\A_o) , \,
		\b, \,
		\bigl( \A_o - (\mu_{22} - 1) \eta^{-1} \tr(\A_o) \I_d \bigr)
			\oslash \bs{M} \Bigr) ,
\end{equation}
where $\oslash$ stands for componentwise division, and
\[
	\A_o \ \defeq \ \A - c \I_d, \quad
	\eta \ \defeq \ \mu_4 + (d-1) \mu_{22} - d \ > \ 0 .
\]
\end{lem}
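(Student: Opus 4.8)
The plan is to exploit the block structure of $\bs{J}$: its middle slot is literally the identity map on $\R^d$, so all the content lies in inverting the coupled map $(c,\A)\mapsto\bigl(c+2^{-1}\tr(\A),\;c\I_d+\A\odot\M+2^{-1}\mu_{22}\tr(\A)\I_d\bigr)$. The crucial preliminary, which simultaneously yields the asserted strict inequality $\eta>0$, is a variance identity. Let $\Z=(Z_1,\dots,Z_d)$ be a random vector with density $K$. By (K2) every diagonal entry of $\int K(\z)\z\z^\top\,d\z$ equals $1$, so $\Ex(Z_j^2)=1$ and $\Ex(\|\Z\|^2)=d$, and by (K1) the mixed moments $\Ex(Z_j^2Z_k^2)$ with $j\ne k$ all equal $\mu_{22}$, so $\Ex(\|\Z\|^4)=d\mu_4+d(d-1)\mu_{22}$. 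Consequently
\[
	\eta \;=\; \mu_4+(d-1)\mu_{22}-d \;=\; \frac1d\Bigl(\Ex(\|\Z\|^4)-\Ex(\|\Z\|^2)^2\Bigr) \;=\; \frac1d\operatorname{Var}\bigl(\|\Z\|^2\bigr)\;\ge\;0 ,
\]
and this variance is strictly positive, for $\operatorname{Var}(\|\Z\|^2)=0$ would force $\|\Z\|^2=d$ almost surely, i.e.\ $K$ concentrated on the sphere $\{\|\z\|=\sqrt d\}$, a Lebesgue null set, contradicting $\int K(\z)\,d\z=1$. Hence $\eta>0$.

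Next I would fix an arbitrary $(c,\b,\A)\in\Hds$ and look for $(\tilde c,\tilde\b,\tilde\A)\in\Hds$ with $\bs{J}(\tilde c,\tilde\b,\tilde\A)=(c,\b,\A)$. The middle equation gives $\tilde\b=\b$ at once. Taking the trace of the matrix equation $\tilde c\I_d+\tilde\A\odot\M+2^{-1}\mu_{22}\tr(\tilde\A)\I_d=\A$ and using $\tr(\tilde\A\odot\M)=2^{-1}(\mu_4-\mu_{22})\tr(\tilde\A)$ reduces the problem for $(\tilde c,\tr(\tilde\A))$ to the $2\times2$ linear system $\tilde c+2^{-1}\tr(\tilde\A)=c$ and $d\tilde c+2^{-1}(\mu_4+(d-1)\mu_{22})\tr(\tilde\A)=\tr(\A)$, whose determinant equals $\eta/2\ne0$. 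Its unique solution is $\tr(\tilde\A)=2\eta^{-1}\tr(\A_o)$ and $\tilde c=c-\eta^{-1}\tr(\A_o)$, with $\A_o=\A-c\I_d$. Substituting these back into the matrix equation isolates $\tilde\A\odot\M=\A_o-(\mu_{22}-1)\eta^{-1}\tr(\A_o)\I_d$; since all entries $M_{jk}$ are positive by Corollary~\ref{cor:LM-Taylor}, componentwise division is legitimate and recovers $\tilde\A$, which is symmetric because $\M$ and the right-hand side are. Thus the solution exists and is unique, it is exactly~\eqref{eq:Inverse.L}, and therefore $\bs{J}$ is a bijection with that inverse.

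The only genuinely non-routine point is the strict positivity $\eta>0$; everything else is bookkeeping with traces and the explicit entries of $\M$. I would present the identity $\eta=d^{-1}\operatorname{Var}(\|\Z\|^2)$ as the conceptual core of the argument, and remark that a similar Cauchy--Schwarz-plus-nondegeneracy reasoning underlies the positivity of the entries of $\M$ invoked above. As an alternative to the derivation, once $\eta>0$ and $M_{jk}>0$ are in hand one can simply check by direct substitution that $\bs{J}$ composed with~\eqref{eq:Inverse.L} is the identity on $\Hds$; I prefer the derivation because it also explains where the formula comes from.
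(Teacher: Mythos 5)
Your proof is correct and follows essentially the same route as the paper's: reduce to the scalar part by taking traces, solve the resulting $2\times 2$ linear system for the scalar unknowns (the paper packages this as the identity $\tr(\tilde{\A}_o)=\eta g$ with $g = 2^{-1}\tr(\A)$, you do it via Cramer's rule — same computation), and divide componentwise by $\M$ for the off-trace part. The one place where you go slightly beyond the paper is in spelling out \emph{why} $\eta>0$ strictly — the paper simply asserts $\int K(\z)(\|\z\|^2-d)^2\,d\z>0$, while you note this is $\operatorname{Var}(\|\Z\|^2)$ and argue it cannot vanish because a probability density cannot concentrate on the sphere $\{\|\z\|=\sqrt d\}$; this is a worthwhile clarification but not a different method.
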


Thus we may estimate $\bigl( f(\x), hDf(\x), h^2D^2f(\x) \bigr)$ by
\[
	\bigl( \hat{f}_{n,h}^{\mathtt{M}}(\x),
		h\hat{Df}_{n,h}^{\mathtt{M}}(\x),
		h^2 \hat{D^2f}_{n,h}^{\mathtt{M}}(\x) \bigr)
	\ := \ \bs{J}^{-1} \bigl( \snhx, \bsnhx, \bSnhx \bigr) .
\]
Plugging in the explicit formula for $\bs{J}^{-1}$ leads to the alternative representations
\begin{align*}
	\hat{f}_{n,h}^{\mathtt{M}}(\x) \
	&= \ \frac{1}{n} \sum_{i=1}^n K_h(\X_i - \x) \,
		p_0(h^{-1}(\X_i - \x)) , \\
	\hat{Df}_{n,h}^{\mathtt{M}}(\x) \
	&= \ \frac{1}{nh} \sum_{i=1}^n K_h(\X_i - \x) \,
		\bigl( p_j(h^{-1}(\X_i - \x)) \bigr)_{j=1}^d , \\
	\hat{D^2f}_{n,h}^{\mathtt{M}}(\x) \
	&= \ \frac{1}{nh^2} \sum_{i=1}^n K_h(\X_i - \x) \,
		\bigl( p_{jk}(h^{-1}(\X_i - \x)) \bigr)_{j,k=1}^d
\end{align*}
with the polynomials
\begin{align}
\label{eq:MM2_0}
	p_0(\z) \
	&\defeq \ 1 - \eta^{-1}(\|\z\|^2 - d) , \\
\label{eq:MM2_1}
	p_j(\z) \
	&\defeq \ z_j , \\
\label{eq:MM2_2}
	p_{jk}(\z) \
	&\defeq \ \begin{cases}
		\mu_{22}^{-1} z_j z_k
			& \text{if} \ j \ne k , \\
		2 (\mu_4 - \mu_{22})^{-1}
			\bigl( z_j^2 - 1 - (\mu_{22} - 1)\eta^{-1}(\|\z\|^2 - d) \bigr)
			& \text{if} \ j = k .
		\end{cases}
\end{align}

\paragraph{A refinement.}
If we are willing to assume that $f \in \cC_b^3(\R^d)$, then even
\[
	\Ex \bigl( \hat{f}_{n,h}^{\mathtt{M}}(\x),
		h\hat{Df}_{n,h}^{\mathtt{M}}(\x),
		h^2 \hat{D^2f}_{n,h}^{\mathtt{M}}(\x) \bigr)
	\ = \ \bigl( f(\x), hDf(\x) + h^3 \bs{\beta}(\x), h^2D^2f(\x) \bigr) + o(h^3)
\]
with
\[
	\bs{\beta}(\x) \ := \ \sum_{\lvert\bgamma\rvert = 3}
		\frac{f^{(\bgamma)}(\x)}{\bgamma!} \int K(\z) \z^\bgamma \z \, d\z .
\]
This suggests to replace the polynomial $p_j(\z) := z_j$ with the polynomial $p_j(\z) = p_{\e_j}(\z)$ of order three solving \eqref{eq:p_alpha.L} with $L = 3$. For general kernels, finding $p_j$ is somewhat tedious, but in case of a spherically symmetric kernel $K$, the modified polynomial is given by
\begin{equation}
\label{eq:MM3_1}
	p_j(\z) \defeq a z_j - b \|\z\|^2 z_j
\end{equation}
with
\[
	b \defeq \Ex(R^4) \big/ \bigl( \Ex(R^6) - \Ex(R^4)^2/d \bigr)
	\quad\text{and}\quad
	a \defeq 1 + b \, \Ex(R^4)/d ,
\]
where $R := \|\bs{Z}\|$ with a random vector $\bs{Z} \sim K$; see Appendix \ref{app:MM}

\paragraph{The Gaussian kernel.}
If $K = \varphi$, the special kernel moments are $\mu_{22} = 1$ and $\mu_4 = 3$, whence $\eta = 2$. Then the estimators of $f(\x)$, $Df(\x)$ and $D^2 f(\x)$ are given by the following simplified formulae:
\begin{align*}
	\hat{f}_{n,h}^{\mathtt{M}}(\x) \
	&= \ \snhx - 2^{-1} \tr(\bSnhx - \snhx \I_d) , \\
	\hat{Df}_{n,h}^{\mathtt{M}}(\x) \
	&= \ h^{-1} \bsnhx , \\
	\hat{D^2f}_{n,h}^{\mathtt{M}}(\x) \
	&= \ h^{-2} (\bSnhx - \snhx \I_d) .
\end{align*}
The polynomials in \eqref{eq:MM2_0}, \eqref{eq:MM2_1} and \eqref{eq:MM2_2} simplify to
\[
	p_0(\z) \ = \ 1 - 2^{-1}(\|\z\|^2 - d) , \quad
	p_j(\z) \ = \ z_j
	\quad\text{and}\quad
	p_{jk}(\z) \ = \ \begin{cases}
		z_j z_k
			& \text{if} \ j \ne k , \\
		z_j^2 - 1
			& \text{if} \ j = k .
		\end{cases}
\]
Note also that
\begin{displaymath}
  \hat{f}_{n,h}^{\mathtt{M}}(\x) = n^{-1} \sum_{i=1}^n \tilde{K}_h(\X_i - \x)
\end{displaymath}
with the ``sombrero kernel'' $\tilde{K}(\z) = K(\z) (2 + d - \|\z\|^2)/2$.

As to the refined estimator of $Df(\x)$, note that $\Ex(R^4) = d(d+2)$ and $\Ex(R^6) = d(d+2)(d+4)$, and this leads to $b = 1/2$, $a = (d+4)/2$ and
\begin{align*}
	\hat{Df}_{n,h}^{\mathtt{M}}(\x) \
	&= \ \frac{1}{nh} \sum_{i=1}^n K_h(\X_i - \x)
		\, 2^{-1}(4 + d - h^{-2} \|\X_i - \x\|^2) \, h^{-1} (\X_i - \x) , \\
	p_j(\z) \
	&= \ 2^{-1}(4 + d - \|\z\|^2) z_j .
\end{align*}

\paragraph{Local Moments in \cite{Mueller2001}.}
\citet{Mueller2001} consider only the rectangular kernel $K(\z) := \
\one\{\lvert z_j\rvert \le 1 \text{ for } 1 \le j \le d\}$. The rectangular kernel
$K$ is not normalized to be a probability density and violates (K2), but it
does satisfy (K1), and $\int K(\z) \|\z\|^r \, d\z < \infty$ for all
$r \ge 0$. For $\balpha \in \N_0^d$, they propose
\[
	m_{n,h}^\balpha(\x)
	\defeq h^{-\lvert\tilde{\balpha}\rvert}
		\frac{s_{n,h}^\balpha(\x)}{s_{n,h}^{\bs{0}}(\x)}
\]
with $\tilde{\balpha} \defeq (\one\{\alpha_j \text{ is odd}\})_{j=1}^d$ as an estimator of
\[
	m^\balpha(\x) \defeq \lim_{h \to 0} h^{-\lvert\tilde{\balpha}\rvert}
		\frac{s_h^\balpha(\x)}{s_h^{\bs{0}}(\x)}
	= \frac{f^{(\tilde{\balpha})}(\x)}{f(\x)}
		\frac{\mu_{\balpha + \tilde{\balpha}}}{\mu_{\bs{0}}} .
\]
The latter equation in the previous formula is a consequence of Lemma~\ref{lem:LM-Taylor} and \eqref{eq:moments.symmetry.K}, because
\begin{align*}
	s_h^{\bs{0}}(\x)
	&= \mu_{\bs{0}} f(\x) + O(h^2), \\
	s_h^{\balpha}(\x)
	&= \sum_{\lvert\bgamma\rvert \le \lvert\tilde{\balpha}\rvert}
		\frac{h^{\lvert\bgamma\rvert}}{\bgamma!}
			f^{(\bgamma)}(\x) \mu_{\balpha+\bgamma}
		+ o(h^{\lvert\tilde{\balpha}\rvert})
		= h^{\lvert\tilde{\balpha}\rvert} f^{(\tilde{\balpha})}(\x)
			\mu_{\balpha+\tilde{\balpha}} + o(h^{\lvert\tilde{\balpha}\rvert})
\end{align*}
as $h \to 0$, locally uniformly in $\x$, whenever
$\lvert \tilde{\balpha} \rvert \leq L$. The last equality in the previous
equation follows because $\balpha + \bgamma \in 2\N_0^d$ can only hold if
$\lvert \bgamma \rvert \geq \lvert \tilde{\balpha} \rvert$. 

The problem with the limit $m^\balpha(\x)$ is that sometimes the most
interesting part of $s_{n,h}^\balpha(\x)$ is hidden in the higher order
terms of its Taylor expansion, whereas $m^\balpha(\x)$ is
non-informative. For instance, $m^{2\e_j}(\x) = \mu_2/\mu_0$ for $1 \le j
\le d$, which does not even depend on $f$. Hence, we are not using the quantities $m^\balpha(\x)$.

\subsection{Kernel Density Estimation}
\label{sec:KD}

The local $\bs{0}$-th sample moment $\snhx = s_{n,h}^{\bs{0}}(\x)$ is just the usual kernel density estimator with kernel $K$ and bandwidth $h$ at the point $\x$, as proposed by \cite{Rosenblatt1956}, \cite{Parzen1962} for $d = 1$, see also \citet{WandJones1995}. Hence we write it as $\hat{f}_{n,h}^{}(\x)= \hat{f}_{n,h}^{\mathtt{K}}(\x)$. Assuming that $K \in \cC_b^2(\R^d)$, the kernel density estimator $\hat{f}_{n,h}$ belongs to $\cC_b^2(\R^d)$ as well, and a natural estimator for $f^{(\balpha)}$ with $\balpha \in \N_0^d$, $\lvert\balpha\rvert \le 2$, is given by $\hat{f_{n,h}^{(\balpha)}} \defeq (\hat{f}_{n,h}^{})^{(\balpha)}$, i.e.
\begin{displaymath}
  \hat{f_{n,h}^{(\balpha)}}(\x)
  = \frac{(-1)^{\lvert\balpha\rvert}}{nh^{d+ \lvert\balpha\rvert}}
	\sum_{i=1}^n K^{(\balpha)}(h^{-1}(\X_i - \x))
  = \frac{(-1)^{\lvert\balpha\rvert}}{nh^{\lvert\balpha\rvert}}
  	\sum_{i=1}^n (K^{(\balpha)})_h (\X_i - \x) ,
\end{displaymath}
where $G_h(\z) := h^{-d} G(h^{-1} \z)$ for any real-, vector- or matrix-valued function $G$ on $\R^d$. In particular, $f(\x)$, $Df(\x)$ and $D^2f(\x)$ are estimated by
\begin{align*}
  \hat{f}_{n,h}^{\mathtt{K}}(\x)
  &= \frac{1}{n} \sum_{i=1}^n K_h(\X_i - \x),\\
  \hat{D f}_{n,h}^{\mathtt{K}}(\x)
  &= - \frac{1}{nh} \sum_{i=1}^n (D K)_h(\X_i - \x),\\
  \hat{D^2f}_{n,h}^{\mathtt{K}}(\x)
  &= \frac{1}{nh^2} \sum_{i=1}^n (D^2K)_h(\X_i - \x).
\end{align*}

\paragraph{The Gaussian kernel.}
In the special case of $K = \varphi$, gradient and Hessian matrix of $K$
are given by $DK(\z) = - K(\z) \z$ and
$D^2K(\z) = K(\z) (\z \z^\top - \I_d)$, respectively. The kernel density
estimators are then
\begin{align*}
  \hat{f}_{n,h}^{\mathtt{K}}(\x)
  & = \snhx,\\
  \hat{Df}_{n,h}^{\mathtt{K}}(\x)
  & = h^{-1} \bsnhx,\\
  \hat{D^2f}_{n,h}^{\mathtt{K}}(\x)
  & = h^{-2} \bigl( \bSnhx - \snhx \I_d \bigr) .
\end{align*}

\subsection{Local Log-Likelihood Estimation}
\label{sec:LL}

From now on we focus on the log-density $\ell \ := \ \log f$
and the region $\{f > 0\} = \{\ell > -\infty\}$. For $\x,\y \in \{f > 0\}$,
\begin{displaymath}
	\ell(\y) \ = \ \ell(\x)
		+ D\ell(\x)^\top (\y-\x)
		+ 2^{-1} (\y-\x)^\top D^2\ell(\x) (\y-\x)
		+ \o(\lVert\y - \x\rVert^2)
\end{displaymath}
as $\y \rightarrow \x$. Thus, in a small neighborhood of $\x$, $f$ may be
approximated by $f_{\ell(\x),D\ell(\x),D^2\ell(\x)}(\cdot - \x)$, where
\[
	f_{c,\b,\A}(\z) \defeq \exp\bigl( g_{c,\b,\A}(\z) \bigr)
	\quad\text{with}\quad
	g_{c,\b,\A}(\z) \defeq c + \z^\top\b + 2^{-1} \z^\top \A \z
\]
with a parameter $(c,\b,\A) \in \Hds = \R \times \R^d \times \Rddsym$. \citet{Loader1996} proposed to estimate the unknown triple $\bigl( \ell(\x), D\ell(\x), D^2\ell(\x) \bigr)$ by the minimizer $\bigl( \hat{\ell}_{n,h}^{\mathtt{L}}(\x), \hat{D\ell}_{n,h}^{\mathtt{L}}(\x), \hat{D^2\ell}_{n,h}^{\mathtt{L}}(\x) \bigr)$ of the local negative log-likelihood function
\begin{align}
\label{eq:LL-Functional}
	\hat{S}_{n,h}^{\mathtt{L}}(c,\b,\A,\x) \
	\defeq& \ - \frac{1}{n} \sum_{i=1}^n K_h(\X_i - \x) g_{c,\b,\A}(\X_i - \x)
		+ \int K_h(\y - \x) f_{c,\b,\A}(\y - \x) \, d\y \\
\nonumber
	=& \ - \frac{1}{n} \sum_{i=1}^n K_h(\X_i - \x) g_{c,\b,\A}(\X_i - \x)
		+ \int K(\z) f_{c,h\b,h^2\A}(\z) \, d\z
\end{align}
over $(c,\b,\A) \in \Hds$. In general, a minimizer may not exist. But under an exponential moment condition on $K$, $\hat{S}_{n,h}^{\mathtt{L}}(\cdot,\cdot,\cdot,\x)$ has a unique minimizer with asymptotic probability one as $h \to 0$ and $nh^d \to \infty$; see Section~\ref{app:LL}. If a minimizer exists, it fulfills the following equation system:
\[
	\bigl( \snhx, \bsnhx, \bSnhx \bigr)
	\ = \ \int K(\z) (1, \z, \z\z^\top) f_{c,h\b,h^2\A}(\z) \, d\z ,
\]
where on the left hand side we have all the empirical local moments of order at most two. For fixed $(c,\b,\A)$ and as $h \to 0$, the right hand side is approximately equal to
\[
	\int K(\z) (1, \z, \z\z^\top) \exp(c)
		\bigl( 1 + h\b^\top\z + 2^{-1} h^2 \z^\top (\A + \b\b^\top) \z \bigr) \, d\z
	\ = \ \exp(c) \bs{J} \bigl( 1, h\b, h^2(\A + \b\b^\top) \bigr)
\]
with the operator $\bs{J}$ in Corollary~\ref{cor:LM-Taylor}. This indicates that local moment matching and local log-likelihood are closely related, at least asymptotically, and this will be made precise in Section~\ref{sec:asymptotics}.

\paragraph{The Gaussian kernel.}

For general $K$, a closed form expression for $\bigl( \hat{\ell}_{n,h}^{\mathtt{L}}(\x), \hat{D\ell}_{n,h}^{\mathtt{L}}(\x), \hat{D^2\ell}_{n,h}^{\mathtt{L}}(\x) \bigr)$ does not exist. But in case of $K = \varphi$, we have
\begin{align*}
	\hat{S}_{n,h}^{\mathtt{L}}(c, \b, \A, \x)
	= &- \ c s_n(\x)
		- h \b^\top \bsnhx
		- 2^{-1} h^2\tr(\A\bSnhx)\\
	&+ \ \exp \Bigl( c + 2^{-1} h^2 \b^\top (\I_d - h^2 \A)^{-1} \b \Bigr)
		\det(\I_d - h^2 \A)^{-1/2},
\end{align*}
provided that $\I_d - h^2\A$ is positive definite. Otherwise
$\hat{S}_{n,h}^{\mathtt{L}}(c, \b, \A, \x) = \infty$. As shown in Appendix
\ref{app:LL}, this leads to the estimators
\begin{align*}
	\hat{\ell}_{n,h}^{\mathtt{L}}(\x)
	&= \log s_n(\x)
		- 2^{-1} \munhx^\top \Sigmanhx \munhx
		- 2^{-1} \log \det \Sigmanhx , \\
	\hat{D\ell}_{n,h}^{\mathtt{L}}(\x)
	&= h^{-1} \Sigmanhx^{-1} \munhx , \\
	\hat{D^2 \ell}_{n,h}^{\mathtt{L}}(\x)
	&= h^{-2} \bigl( \I_d - \Sigmanhx^{-1} \bigr)
\end{align*}
with the local sample mean
\begin{equation}\label{eq:local-mean}
	\munhx \ \defeq \ \snhx^{-1} \bsnhx
	\ = \ \int \y \, \hat{P}_{n,h}(d\y)
\end{equation}
and the local sample covariance matrix
\begin{equation} \label{eq:local-variance}
  \Sigmanhx \ \defeq \ \snhx^{-1} \bSnhx - \munhx \munhx^\top
  \ = \ \int (\y - \munhx)(\y - \munhx)^\top \,
  \hat{P}_{n,h}(d\y)
\end{equation}
of the local empirical distribution
\[
	\hat{P}_{n,h} \ := \ \Bigl( \sum_{i=1}^n K_h(\X_i - \x) \Bigr)^{-1}
		\sum_{i=1}^n K_h(\X_i - \x) \delta_{h^{-1}(\X_i - \x)}^{} .
\]
If $n \ge d+1$, then with probability one, the matrix $\Sigmanhx \in \Rddsym$ is nonsingular for each $\x \in \R^d$. This follows from the fact that with probability one, the points $\X_1, \X_2, \ldots, \X_n$ are not contained in a hyperplane in $\R^d$.

\subsection{Local Hyv\"arinen Score Estimation}
\label{sec:HS}

Let $\PP \subset \CC^2(\R^d)$ be a family of integrable, strictly positive functions. Suppose further that for arbitrary $p, q \in \PP$,
\[
	\int p(\y) \bigl( \lVert D \log q(\y)\rVert^2
		+ \lVert D^2 q(\y)\rVert_F^2/q(\y) \bigr) \, d\y
	\ < \ \infty
\]
and
\[
  p(\y) \lVert D \log q(\y)\rVert \ \to \ 0 \quad\text{as} \ \|\y\| \to
  \infty ,
\]
where $\lVert \cdot \rVert_F$ is the Frobenius norm given by
$\lVert \A \rVert_F^2 = \tr(\A\A^\top)$. An example is the family of
multivariate Gaussian distributions. \citet{Hyvaerinen2005} proposed the
scoring function $S^{\mathtt{H}} : \PP \times \R^d \to \R$ given by
\[
	S^{\mathtt{H}}(p, \y)
	\ \defeq \ 2^{-1} \lVert D \log p(\y)\rVert^2 + \triangle \log p(\y) ,
\]
where $\triangle$ is the Laplace operator, i.e.\ $\triangle g(\y) \defeq \tr(D^2 g(\y))$. With $S^{\mathtt{H}}(q, p) \defeq \int S^{\mathtt{H}}(q,\y) p(\y) \, d\y$ for $p,q \in \PP$, he showed that
\[
	S^{\mathtt{H}}(q,p) \ = \ S^{\mathtt{H}}(p,p)
		+ 2^{-1} \int \bigl\lVert D \log q(\y) - D \log p(\y) \bigr\rVert^2 p(\y) \, d\y .
\]
That means, $S^{\mathtt{H}}(q,p) \ge S^{\mathtt{H}}(p,p)$ with equality if and only if $q = cp$ for some $c > 0$.

In our setting of density estimation, we assume that for given $\x \in \{f > 0\}$ and sufficiently small bandwidth $h > 0$, the product $K_h(\cdot - \x) f(\cdot)$ may be approximated by $f(\x) p_{D\ell(\x),D^2\ell(\x)}^{}(\cdot - \x)$, where
\[
	p_{\b,\A}^{}(\y)
	\ \defeq \ K_h(\y) \exp \bigl( g_{0,\b,\A}(\y) \bigr)
	\ = \ K_h(\y) \exp(\b^\top\y + 2^{-1} \y^\top\A\y)
\]
for $\b \in \R^d$ and $\A \in \Rddsym$. Since the empirical measure
\[
	\frac{1}{n} \sum_{i=1}^n K_h(\X_i - \x) \delta_{\X_i}^{}
\]
may be viewed as an estimator of the measure with density $K_h(\cdot - \x) f(\cdot) \approx f(\x) p_{D\ell(\x),D^2\ell(\x)}^{}(\cdot - \x)$, one can try to estimate $\bigl( D\ell(\x), D^2\ell(\x) \bigr)$ by a minimizer of
\[
	\frac{1}{n} \sum_{i=1}^n K_h(\X_i - \x) S^{\mathtt{H}}(p_{\b,\A}^{}, \X_i - \x)
\]
over all $(\b,\A) \in \Hds_0$, where
\[
	\Hds_0 \ := \ \R^d \times \Rddsym .
\]
We assume temporarily that the family $\PP := \bigl\{ p_{\b,\A}^{} : (\b,\A) \in \Hds_0 \bigr\}$ satisfies the assumptions above. But note that
\[
	D \log p_{\b,\A}^{}(\y) \ = \ D \log K_h(\y) + \b + \A\y
	\quad\text{and}\quad
	\triangle \log p_{\b,\A}^{}(\y) \ = \ \triangle \log K_h(\y) + \tr(\A) ,
\]
so
\[
	K_h(\y) S^{\mathtt{H}}(p_{\b,\A}^{},\y)
	\ = \ 2^{-1} K_h(\y) \|\b + \A\y\|^2
		+ K_h(\y) \tr(\A) + h^{-1} (DK)_h(\y)^\top (\b + \A \y) + \kappa_h(\y)
\]
for some $\kappa_h(\y)$ not depending on $(\b,\A)$. Hence, it suffices to assume that $K \in \cC^1(\R^d)$, and we propose to estimate $\bigl( D\ell(\x), D^2\ell(\x) \bigr)$ by a minimizer $\bigl( \hat{D\ell}_{n,h}^{\mathtt{H}}(\x), \hat{D^2\ell}_{n,h}^{\mathtt{H}}(\x) \bigr)$ of
\[
	\hat{S}_{n,h}^{\mathtt{H}}(\b,\A,\x) \
	\defeq \ \frac{1}{n}\sum_{i=1}^n S_h^{\mathtt{H}}(\b,\A, \X_i - \x)
\]
over all $(\b,\A) \in \Hds_0$, where
\begin{displaymath}
	S_h^{\mathtt{H}}(\b,\A,\y) \
	\defeq \ 2^{-1} K_h(\y) \|\b + \A\y\|^2
		+ K_h(\y) \tr(\A) + h^{-1} (DK)_h(\y)^\top (\b + \A \y) .
\end{displaymath}
As shown in Section~\ref{app:LH}, the resulting estimator is given by
\begin{align}
\label{eq:HS_1}
	h \hat{D\ell}_{n,h}^{\mathtt{H}}(\x) \
	&= \ - h^2 \hat{D^2\ell}_{n,h}^{\mathtt{H}}(\x) \munhx -  \bqnhx , \\
\label{eq:HS_2}
	h^2 \hat{D^2\ell}_{n,h}^{\mathtt{H}}(\x) \
	&= \ \bs{T}_{\Sigmanhx}^{} \bigl( - \I_d - \bQnhx + \munhx \bqnhx^\top \bigr) ,
\end{align}
where $\munhx$ are defined at \eqref{eq:local-mean}, $\Sigmanhx$ at
\eqref{eq:local-variance}, and
\begin{align*}
	\bqnhx \
	&\defeq \ \Bigl( \sum_{i=1}^n K_h(\X_i - \x) \Bigr)^{-1}
		\sum_{i=1}^n (DK)_h(\X_i - \x)
		\ \in \R^d , \\
	\bQnhx \
	&\defeq \ \Bigl( \sum_{i=1}^n K_h(\X_i - \x) \Bigr)^{-1}
		\sum_{i=1}^n (DK)_h(\X_i - \x) h^{-1} (\X_i - \x)^\top
		\ \in \R^{d\times d} .
\end{align*}
Here, $\bs{T}_{\bSigma}(\B)$ denotes the unique solution $\A \in \Rddsym$ of the equation
\[
	\bSigma\A + \A \bSigma \ = \ \B + \B^\top
\]
for positive definite $\bSigma \in \Rddsym$ and $\B \in \R^{d\times d}$. An explicit representation of $\bs{T}_{\bSigma}(\B)$ is provided in Section~\ref{app:LH}. If $\B$ is symmetric and $\bSigma \B = \B\bSigma$, then $\T_{\bSigma}(\B) = \bSigma^{-1}\B$.

\paragraph{The Gaussian kernel.}
In case of $K = \varphi$,
\[
	\bqnhx \ = \ - \munhx
	\quad\text{and}\quad
	\bQnhx \ = \ - \Sigmanhx + \munhx \munhx^\top ,
\]
so $-\I_d - \bQnhx +  \munhx \bqnhx^\top = \Sigmanhx - \I_d$ and $\bs{T}_{\Sigmanhx}^{} \bigl( \Sigmanhx - \I_d \bigr) = \bs{I} - \Sigmanhx^{-1}$. Consequently, \eqref{eq:HS_1} and \eqref{eq:HS_2} yield the explicit formulae
\begin{align*}
	\hat{D\ell}_{n,h}^{\mathtt{H}}(\x) \
	&= \ h^{-1} \Sigmanhx^{-1} \munhx
		\ = \ \hat{D\ell}_{n,h}^{\mathtt{L}}(\x) , \\
	\hat{D^2 \ell}_{n,h}^{\mathtt{H}}(\x) \
	&= \ h^{-2} \bigl( \I_d - \Sigmanhx^{-1} \bigr)
		\ = \ \hat{D^2 \ell}_{n,h}^{\mathtt{L}}(\x) ,
\end{align*}
which show that the local log-likelihood estimator coincides with the local
Hyv\"arinen score estimator for the Gaussian kernel.

\section{Asymptotic Properties}
\label{sec:asymptotics}

In this section we analyze the asymptotic properties of the estimation paradigms (\texttt{M}), (\texttt{K}) and (\texttt{L}) under conditions (K0--2), assuming that $f \in \cC_b^4(\R^d)$. All asymptotic statements refer to the following setting:
\[
	\x \to \x_o \in \{f > 0\}, \quad
	h \to 0
	\quad\text{and}\quad
	nh^d \to \infty .
\]
For notational convenience, some of our results are formulated in terms of directional derivatives
\[
	D^k g(\x; \z)
	\defeq \left. \frac{d^k}{dt^k} \right\vert_{t=0} g(\x + t\z)
	= k! \sum_{|\bgamma| = k} \frac{g^{(\bgamma)}(\x)}{\bgamma!} \, \z_{}^\bgamma
\]
of $g = f$ or $g = \ell$.

Note that $\Hds$ is a Euclidean space with inner product
\[
	\bigl\langle (c,\b,\A), (\tilde{c},\tilde{\b},\tilde{\A}) \bigr\rangle
	\ \defeq \ c\tilde{c} + \b^\top \tilde{\b} + 2^{-1} \tr(\A \tilde{\A})
\]
and norm $\lVert (c,\b,\A)\rVert = \bigl( c^2 + \|\b\|^2 + 2^{-1} \|\A\|_F^2 \bigr)^{1/2}$. The factor $2^{-1}$ in front of $\tr(\A \tilde{\A})$ looks a bit arbitrary, but it will be convenient in connection with local log-likelihood.

\subsection{Linear Expansions}

The estimators for $g = f$ or $g = \ell$ and its derivatives admit the following linear expansion:

\begin{equation}
  \tag{LE}
  \Bigl( \hat{g}_{n,h}(\x), h \hat{Dg}_{n,h}(\x), h^2 \hat{D^2g}_{n,h}(\x) \Bigr)
  \ = \ \bigl( g(\x), h Dg(\x), h^2 D^2g(\x) \bigr)
  + \Bvechx + \Yvecnhx
  \label{eq:LE}
\end{equation}
with a deterministic bias $\Bvechx$ and a stochastic term $\Yvecnhx$. Precisely,
\[
	\Bvechx
	\ = \ 
	\Bigl( h^{\gamma(0)} \beta(\x_o) + o(h^{\gamma(0)}), \
			h^{\gamma(1)} \bs{\beta}(\x_o) + o(h^{\gamma(1)}), \
			h^4 \B(\x_o) + o(h^4) \Bigr)
\]
with given exponents $\gamma(0) \in \{2,3,4\}, \gamma(1) \in \{3,4\}$ and a triple $\bigl( \beta(\x_o), \bs{\beta}(\x_o), \B(\x_o) \bigr) \in \Hds$, and

\begin{equation}
  \label{eq:stochastic-term}
  \Yvecnhx \ = \ \frac{1}{nh^d} \sum_{i=1}^n
  \bigl[ \Gvec(h^{-1}(\X_i - \x)) - \Ex \Gvec(h^{-1}(\X_1 - \x)) \bigr]
  + o_p((nh^d)^{-1/2})                        
\end{equation}
with a certain function $\Gvec : \R^d \to \Hds$ such that
\[
	\int \bigl( \lVert\Gvec(\z)\rVert + \lVert\Gvec(\z)\rVert^2 \bigr) \, d\z
	\ < \ \infty .
\]
The function $\Gvec$ depends on derivatives of the kernel possibly multiplied with a multivariate polynomial in $\z$.

\begin{rem}
We do not analyse the asymptotic properties of the estimator resulting from paradigm (\texttt{H}). The reasons for this are that under the Gaussian kernel, the estimators (\texttt{L}) and (\texttt{H}) coincide, and while the stochastic term for (\texttt{H}) is easy to handle, the bias term is excessively technical for general kernels.
\end{rem}

\paragraph{Asymptotic normality.}
The representation of the stochastic term at \eqref{eq:stochastic-term}
implies that
\begin{equation}
\label{eq:as.normality}
	(nh^d)^{1/2} \Yvecnhx \ \rd \ \cN(0, f(\x_o) \bSigma)
\end{equation}
in the sense that for any $\Hvec \in \Hds$,
\[
	\bigl\langle (nh^d)^{1/2} \Yvecnhx, \Hvec \bigr\rangle
	\ \rd \ \cN \bigl( 0, f(\x_o) \bSigma(\Hvec) \bigr)
	\quad\text{with}\quad
	\bSigma(\Hvec) \ := \ \int \langle \Gvec(\z), \Hvec\rangle^2 \, d\z .
\]
The detailed arguments are given in Appendix \ref{app:LE}.

\paragraph{From densities to log-densities.}
The paradigms (\texttt{M}) and (\texttt{K}) yield estimators $\hat{f}_{n,h}^{}(\x)$, $\hat{Df}_{n,h}^{}(\x)$ and $\hat{D^2f}_{n,h}^{}(\x)$ of $f(\x)$, $Df(\x)$ and $D^2f(\x)$. Concerning $\ell = \log f$, note that
\[
	D\ell(\x) \ = \ f(\x)^{-1} Df(\x)
	\quad\text{and}\quad
	D^2\ell(\x) \ = \ f(\x)^{-1} D^2f(\x) - f(\x)^{-2} Df(\x) Df(\x)^\top .
\]
Hence, natural estimators for $\ell(\x)$, $D\ell(\x)$ and $D^2\ell(\x)$ are given by
\begin{align}
\label{eq:hat.ell.0}
	\hat{\ell}_{n,h}^{}(\x) \
	&\defeq \ \log \hat{f}_{n,h}^{}(\x) , \\
\label{eq:hat.ell.1}
	\hat{D\ell}_{n,h}^{}(\x) \
	&\defeq \ \hat{f}_{n,h}^{}(\x)^{-1} \hat{Df}_{n,h}^{}(\x), \\
\label{eq:hat.ell.2}
	\hat{D^2\ell}_{n,h}^{}(\x) \
	&\defeq \ \hat{f}_{n,h}^{}(\x)^{-1} \hat{D^2f}_{n,h}^{}(\x)
		- \hat{f}_{n,h}^{}(\x)^{-2}
			\hat{Df}_{n,h}^{}(\x) \hat{Df}_{n,h}^{}(\x)^\top .
\end{align}
Interestingly, the asymptotic properties of these estimators are similar to the asymptotic properties of $\hat{f}_{n,h}^{}(\x)$, $\hat{Df}_{n,h}^{}(\x)$ and $\hat{D^2f}_{n,h}^{}(\x)$.

\begin{thm}[Asymptotics for $\hat{\ell}$ in terms of $\hat{f}$]
\label{thm:from.f.to.ell}
Suppose that the estimators $\hat{f}_{n,h}^{}(\x)$, $\hat{Df}_{n,h}^{}(\x)$ and $\hat{D^2f}_{n,h}^{}(\x)$ admit a linear expansion (LE). Then the estimators $\hat{\ell}_{n,h}^{}(\x)$ $\hat{D\ell}_{n,h}^{}(\x)$ and $\hat{D^2\ell}_{n,h}^{}(\x)$ given by \eqref{eq:hat.ell.0}, \eqref{eq:hat.ell.1} and \eqref{eq:hat.ell.2} admit a linear expansion (LE) with the following new ingredients:
\begin{align*}
	\Gvec^{\mathtt{new}}(\z) \
	&= \ f(\x_o)^{-1} \Gvec(\z) , \\
	\gamma^{\mathtt{new}}(0) \
	&= \ \gamma(0) , \\
	\beta^{\mathtt{new}}(\x_o) \
	&= \ f(\x_o)^{-1} \beta(\x_o) , \\
	\gamma^{\mathtt{new}}(1) \
	&= \ \min\{\gamma(1),\gamma(0)+1\} , \\
	\bs{\beta}^{\mathtt{new}}(\x_o) \
	&\defeq \ f(\x_o)^{-1} \Bigl(
		\one\{\gamma(1) = \gamma^{\mathtt{new}}(1)\} \bs{\beta}(\x_o)
		- \one\{\gamma(0) + 1 = \gamma^{\mathtt{new}}(1)\}
			f(\x_o)^{-1} \beta(\x_o) D\ell(\x_o) \Bigr) , \\
	\B^{\mathtt{new}}(\x_o) \
	&\defeq \ f(\x_o)^{-1} \Bigl( \B(\x_o)
		- \one\{\gamma(0) = 2\} \beta(\x_o)
			\bigl( D^2\ell(\x_o) - D\ell(\x_o) D\ell(\x_o)^\top \bigr) \\
	& \qquad - \ \one\{\gamma(1) = 3\}
			\bigl( \bs{\beta}(\x_o) D\ell(\x_o)^\top
					+ D\ell(\x_o)\bs{\beta}(\x_o)^\top \bigr) \Bigr) .
\end{align*}
\end{thm}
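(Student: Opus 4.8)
The plan is to start from the linear expansion (LE) for $\bigl(\hat f_{n,h}(\x), h\hat{Df}_{n,h}(\x), h^2\hat{D^2f}_{n,h}(\x)\bigr)$ and push it through the smooth maps defining $\hat\ell_{n,h}, \hat{D\ell}_{n,h}, \hat{D^2\ell}_{n,h}$ by a first-order Taylor (delta-method) argument. Write $\hat f = f(\x) + \delta_0$, $h\hat{Df} = hDf(\x) + \bs\delta_1$, $h^2\hat{D^2f} = h^2D^2f(\x) + \bs\delta_2$, where $(\delta_0,\bs\delta_1,\bs\delta_2) = \Bvechx + \Yvecnhx$. From (LE), the bias part of $\delta_0$ is $O(h^{\gamma(0)})$ with $\gamma(0)\ge 2$, the bias part of $\bs\delta_1$ is $O(h^{\gamma(1)})$ with $\gamma(1)\ge 3$, the bias part of $\bs\delta_2$ is $O(h^4)$, while each stochastic part is $O_p((nh^d)^{-1/2})$ with the $o_p$ remainder controlled by \eqref{eq:stochastic-term}. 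All three perturbations are $o_p(1)$, so $\hat f_{n,h}(\x) \to f(\x_o) > 0$ in probability, which legitimizes the expansions of $\log(\cdot)$ and $(\cdot)^{-1}$.

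The main computation is then to expand each of \eqref{eq:hat.ell.0}--\eqref{eq:hat.ell.2} to first order and sort the resulting terms into a new bias, a new linear stochastic term, and negligible remainders. For $\hat\ell_{n,h}(\x) = \log\hat f_{n,h}(\x)$ one gets $\ell(\x) + f(\x)^{-1}\delta_0 - \tfrac12 f(\x)^{-2}\delta_0^2 + \cdots$; the linear term $f(\x)^{-1}\delta_0$ supplies $\Gvec^{\mathtt{new}}$'s first component $f(\x_o)^{-1}\Gvec(\cdot)$ and the bias coefficient $f(\x_o)^{-1}\beta(\x_o)$ at order $h^{\gamma(0)} = h^{\gamma^{\mathtt{new}}(0)}$, and the quadratic term is $O_p(h^{2\gamma(0)}) + O_p((nh^d)^{-1})$, hence absorbed into the $o$-remainder of the bias and the $o_p((nh^d)^{-1/2})$ remainder. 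For $\hat{D\ell}_{n,h}(\x) = \hat f_{n,h}(\x)^{-1}\hat{Df}_{n,h}(\x)$, use $\hat f^{-1} = f(\x)^{-1}(1 - f(\x)^{-1}\delta_0 + \cdots)$ and multiply by $h^{-1}(hDf(\x) + \bs\delta_1)$; the new order is $\gamma^{\mathtt{new}}(1) = \min\{\gamma(1), \gamma(0)+1\}$ because the cross term $-f(\x)^{-2}\delta_0\cdot Df(\x)$ contributes a bias at order $h^{\gamma(0)}\cdot h^{-1}\cdot h^{\,?}$ — more precisely the $h\,Df(\x)$ factor gives $-f(\x_o)^{-2}\beta(\x_o)D\ell(\x_o)$ at order $h^{\gamma(0)+1}$, which competes with the $f(\x_o)^{-1}\bs\beta(\x_o)$ term at order $h^{\gamma(1)}$; the indicator structure in $\bs\beta^{\mathtt{new}}$ exactly records which of these survives (or whether both do, when $\gamma(1) = \gamma(0)+1$). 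The linear stochastic term picks up $f(\x)^{-1}\cdot h^{-1}\bs\delta_1^{\mathrm{stoch}} - f(\x)^{-2}Df(\x)\,\delta_0^{\mathrm{stoch}}$; here one must check that the second summand is $o_p((nh^d)^{-1/2})$ — it is, since $\delta_0^{\mathrm{stoch}} = O_p((nh^d)^{-1/2})$ and it is multiplied by $Df(\x) = O(1)$, so it is genuinely of the same order, but it is accounted for by the claim that only the first component of $\Gvec$ rescales and the rest of the structure is inherited; in fact one sees that $\Gvec^{\mathtt{new}} = f(\x_o)^{-1}\Gvec$ works for all three blocks because the extra stochastic contributions from the cross terms are $O_p((nh^d)^{-1})$ once one uses that $h^{-1}\bs\delta_1^{\mathrm{stoch}}$ and $\delta_0^{\mathrm{stoch}}$ are jointly $O_p((nh^d)^{-1/2})$ and their product with another $O_p((nh^d)^{-1/2})$ factor is lower order. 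For $\hat{D^2\ell}_{n,h}(\x)$ one expands $\hat f^{-1}\hat{D^2f} - \hat f^{-2}\hat{Df}\hat{Df}^\top$ the same way; the bias corrections are the terms displayed in $\B^{\mathtt{new}}(\x_o)$, and the identities $D\ell = f^{-1}Df$, $D^2\ell = f^{-1}D^2f - f^{-2}DfDf^\top$ are used to rewrite $f(\x_o)^{-2}D^2f(\x_o)$-type expressions in terms of $D^2\ell(\x_o)$ and $D\ell(\x_o)D\ell(\x_o)^\top$.

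The hard part is bookkeeping rather than any single estimate: one must track, for every one of the several cross terms produced by the Taylor expansions, whether its deterministic part is of strictly larger order than $h^{\gamma^{\mathtt{new}}(\cdot)}$ (hence absorbable) or of exactly that order (hence contributing to the new bias coefficient, with the right indicator), and simultaneously whether its stochastic part is $O_p((nh^d)^{-1/2})$ with a fixed limiting shape or $o_p((nh^d)^{-1/2})$. The delicate cases are precisely those where $\gamma(0)+1 = \gamma(1)$ or $\gamma(0) = 2$ or $\gamma(1) = 3$, which is why the statement of the theorem carries exactly those indicator functions; once one fixes the convention that $\beta(\x_o)$ etc.\ denote the leading coefficients and are allowed to be zero, the formulas are forced. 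I would organize the proof as: (i) establish consistency and the validity of the two scalar expansions $\log(f+t) = \log f + f^{-1}t - \tfrac12 f^{-2}t^2 + O(t^3)$ and $(f+t)^{-1} = f^{-1} - f^{-2}t + O(t^2)$ on the relevant event; (ii) substitute and collect the $\hat\ell$ block; (iii) the $\hat{D\ell}$ block; (iv) the $\hat{D^2\ell}$ block; (v) in each block, verify the remainder is of the required $o(h^{\gamma^{\mathtt{new}}})$ plus $o_p((nh^d)^{-1/2})$ form using $nh^d\to\infty$ and $h\to 0$, and read off $\Gvec^{\mathtt{new}}$, $\gamma^{\mathtt{new}}$, $\beta^{\mathtt{new}}$, $\bs\beta^{\mathtt{new}}$, $\B^{\mathtt{new}}$.
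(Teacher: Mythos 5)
You take the same high-level route as the paper: both arguments view $(\ell(\x), hD\ell(\x), h^2D^2\ell(\x))$ as the image of $(f(\x), hDf(\x), h^2D^2f(\x))$ under the smooth map $\bs{F}(c,\b,\A) = (\log c,\, c^{-1}\b,\, c^{-1}\A - c^{-2}\b\b^\top)$ and push the linear expansion (LE) through a delta-method/Taylor argument. The paper does this once for all three blocks by computing $D\bs{F}$ explicitly, invoking Lemma~\ref{lem:delta-method} for the stochastic part (the fact that $D\bs{F}(f(\x_o),\bs{0},\bs{0}) = f(\x_o)^{-1}\,\mathrm{id}$ delivers $\Gvec^{\mathtt{new}} = f(\x_o)^{-1}\Gvec$ in one stroke), and expressing the bias via the integral remainder $\Bvec_h^{\mathtt{new}}(\x) = \int_0^1 D\bs{F}\bigl((f(\x),hDf(\x),h^2D^2f(\x))+t\Bvec_h(\x)\bigr)\Bvec_h(\x)\,dt$. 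Your componentwise expansion of $\log(\cdot)$ and $(\cdot)^{-1}$ is equivalent in principle, but it forces you to do all the bookkeeping by hand, and that is where the argument goes astray.

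The substantive gap is your treatment of the stochastic cross terms. You correctly observe that $-f^{-2}\delta_0^{\mathrm{stoch}}\,Df(\x)$ appearing in the expansion of $\hat{D\ell} = \hat f^{-1}\hat{Df}$ is $O_p((nh^d)^{-1/2})$, not $o_p$, and then you try to salvage $\Gvec^{\mathtt{new}} = f(\x_o)^{-1}\Gvec$ by asserting that the extra stochastic cross contributions are $O_p((nh^d)^{-1})$ as products of two fluctuations. That argument does not apply here: the term $-f^{-2}\delta_0^{\mathrm{stoch}}\,Df(\x)$ is \emph{linear} in $\delta_0^{\mathrm{stoch}}$, and $Df(\x)$ is a deterministic $O(1)$ quantity, not a second fluctuation. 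The actual reason it is negligible is that (LE) is stated for the scaled triple $(\hat\ell, h\hat{D\ell}, h^2\hat{D^2\ell})$: in the second block what enters is $h\hat{D\ell}$, and the cross term there is $-h\,f^{-2}\delta_0^{\mathrm{stoch}}\,Df(\x) = O(h)\cdot O_p((nh^d)^{-1/2}) = o_p((nh^d)^{-1/2})$, precisely because $h \to 0$. The same extra power of $h$ is why the deterministic cross contribution appears at order $h^{\gamma(0)+1}$ rather than $h^{\gamma(0)}$, and a parallel bookkeeping error would propagate into the $D^2\ell$ block, where the accounting is more delicate. The paper's route through $D\bs{F}$ sidesteps this trap: the $h$-scalings are already built into the arguments $(c,\b,\A) = (f(\x), hDf(\x), h^2D^2f(\x)) + t\Bvec_h(\x)$ and into $\b_c = h(D\ell(\x_o)+o(1))$, $\A_c = h^2(D^2\ell(\x_o)+D\ell(\x_o)D\ell(\x_o)^\top+o(1))$, so the negligibility falls out automatically. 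A minor additional remark: your coefficient $-f(\x_o)^{-2}\beta(\x_o)D\ell(\x_o)$ for the cross-term bias does not agree with the paper's own computation, which yields $-f(\x_o)^{-1}\beta(\x_o)D\ell(\x_o)$ as the second entry of the integrand (after the outer $f(\x_o)^{-1}$); you appear to have carried over $Df$ without the factor $f$ needed to convert it to $D\ell$.
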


\begin{rem}
The bias term $\Bvec_h^{\mathtt{new}}(\x)$ looks rather complicated. But in
case of $\gamma(0) = \gamma(1) = 4$, which is the case for the refined
local moment matching estimator, we have the simplified identities
\begin{align*}
	\Gvec^{\mathtt{new}}(\z) \
	&= \ f(\x_o)^{-1} \Gvec(\z) , \\
	\gamma^{\mathtt{new}}(j) \
	&= \ 4 \quad\text{for} \ j = 0,1, \\
	\beta^{\mathtt{new}}(\x_o) \
	&= \ f(\x_o)^{-1} \beta(\x_o) , \\
	\bs{\beta}^{\mathtt{new}}(\x_o) \
	&\defeq \ f(\x_o)^{-1} \bs{\beta}(\x_o) , \\
	\B^{\mathtt{new}}(\x_o) \
	&\defeq \ f(\x_o)^{-1} \B(\x_o) .
\end{align*}
\end{rem}

\subsection{Local Moment Matching}
\label{sec:LM-asym}

For local moment matching, we need no further assumptions on the kernel $K$.

\begin{thm}[Asymptotics for (\texttt{M})]
\label{thm:MM_f}
The estimators $\hat{f}_{n,h}^{\mathtt{M}}(\x)$, $\hat{Df}_{n,h}^{\mathtt{M}}(\x)$ and $\hat{D^2f}_{n,h}^{\mathtt{M}}(\x)$ admit a linear expansion (LE) with the following ingredients:
\[
	\Gvec(\z) \ = \ K(\z) \Bigl(
		p_0(\z), \,
		\bigl( p_j(\z) \bigr)_{j=1}^d, \,
		\bigl( p_{jk}(\z) \bigr)_{j,k=1}^d \Bigr)
\]
with the polynomials $p_0$ given by \eqref{eq:MM2_0}, $p_{jk}$ given by \eqref{eq:MM2_2} and $p_j$ given by \eqref{eq:MM2_1} or \eqref{eq:MM3_1}. Moreover,
\begin{align*}
	\gamma(0) = 4 \quad\text{and}\quad
	\beta(\x_o)
	&= 24^{-1} \int K(\z) p_0(\z) D^4f(\x_o;\z) \, d\z , \\
	\B(\x_o)
	&= 24^{-1} \int K(\z) \bigl( p_{jk}(\x) \bigr)_{j,k=1}^d D^4f(\x_o;\z) \, d\z .
\end{align*}
In case of $p_j(\z) = z_j$,
\[
	\gamma(1) = 3
	\quad\text{and}\quad
	\bs{\beta}(\x_o)
		= 6^{-1} \int K(\z) D^3f(\x_o; \z) \, \z \, d\z ,
\]
In case of a spherically symmetric kernel $K$ and $p_j(\z) = a z_j - b \|\z\|^2 z_j$ as in \eqref{eq:MM3_1},
\[
	\gamma(1) = 4
	\quad\text{and}\quad
	\bs{\beta}(\x_o) = \bs{0} .
\]
\end{thm}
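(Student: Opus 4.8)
The plan is to derive the linear expansion (LE) for the local moment matching estimators directly from Corollary~\ref{cor:LM-Taylor} and its refinement, together with the general recipe for the stochastic term. First I would recall that by construction $\bigl( \hat f_{n,h}^{\mathtt M}(\x), h\hat{Df}_{n,h}^{\mathtt M}(\x), h^2\hat{D^2f}_{n,h}^{\mathtt M}(\x) \bigr) = \bs J^{-1}\bigl( \snhx, \bsnhx, \bSnhx \bigr)$, and that $\bs J^{-1}$ applied to the triple $\bigl( \shx, \bshx, \bShx \bigr)$ recovers $\bigl( f(\x), hDf(\x), h^2D^2f(\x) \bigr)$ up to the remainder in Corollary~\ref{cor:LM-Taylor}. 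Splitting $\bigl( \snhx, \bsnhx, \bSnhx \bigr) = \bigl( \shx, \bshx, \bShx \bigr) + \bigl[ \bigl( \snhx, \bsnhx, \bSnhx \bigr) - \bigl( \shx, \bshx, \bShx \bigr) \bigr]$ and applying the linear map $\bs J^{-1}$, the first part yields the deterministic bias $\Bvechx$ and the second part yields the stochastic term $\Yvecnhx$.

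For the stochastic term, I would observe that $\bigl( \snhx, \bsnhx, \bSnhx \bigr) - \bigl( \shx, \bshx, \bShx \bigr) = (nh^d)^{-1}\sum_{i=1}^n\bigl[ \Phi(h^{-1}(\X_i-\x)) - \Ex\Phi(h^{-1}(\X_1-\x)) \bigr]$ with $\Phi(\z) = K(\z)(1,\z,\z\z^\top)$, so that applying the fixed linear operator $\bs J^{-1}$ gives exactly the form \eqref{eq:stochastic-term} with $\Gvec = \bs J^{-1}\circ\Phi$; the explicit formulas \eqref{eq:MM2_0}--\eqref{eq:MM2_2} for $p_0,p_j,p_{jk}$ are precisely the components of $\bs J^{-1}\bigl( 1,\z,\z\z^\top \bigr)$ read off from Lemma~\ref{lem:Inverse.J}, which identifies $\Gvec(\z) = K(\z)\bigl( p_0(\z),(p_j(\z))_j,(p_{jk}(\z))_{jk} \bigr)$. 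The integrability $\int(\lVert\Gvec\rVert+\lVert\Gvec\rVert^2)\,d\z<\infty$ follows from the moment assumption $\int K(\z)\lVert\z\rVert^r\,d\z<\infty$ for all $r>0$, since each $p$ is a polynomial. For the refined choice $p_j(\z) = az_j - b\lVert\z\rVert^2 z_j$ one simply uses that this polynomial solves \eqref{eq:p_alpha.L} with $L=3$, which still makes $\bs J^{-1}$-consistency hold; the relevant $\Gvec$ component changes accordingly.

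For the bias term, the refinement discussion already in the excerpt shows $\Ex\hat{f}_{n,h}^{\mathtt M}(\x) = f(\x) + o(h^3)$ and similarly for the Hessian, while under $f\in\cC_b^4$ the Taylor expansion in Lemma~\ref{lem:LM-Taylor} carried to order $L=4$ produces the $h^4$-bias $\beta(\x_o) = 24^{-1}\int K(\z)p_0(\z)D^4f(\x_o;\z)\,d\z$ for $\hat f^{\mathtt M}$ and the analogous $\B(\x_o)$ for $\hat{D^2f}^{\mathtt M}$: one plugs $p_0$ and $(p_{jk})$ into the order-four term of the expansion, noting that the orthogonality relations \eqref{eq:p_alpha.L} kill the contributions of orders $1,2,3$ and the $o(h^4)$ remainder is handled by the ``locally uniform'' clause of Lemma~\ref{lem:LM-Taylor}. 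For the gradient, with $p_j(\z)=z_j$ the order-three term of the expansion survives and gives $\gamma(1)=3$, $\bs\beta(\x_o) = 6^{-1}\int K(\z)D^3f(\x_o;\z)\,\z\,d\z$; with the refined $p_j$ solving \eqref{eq:p_alpha.L} at $L=3$, that order-three term vanishes identically, and the leading surviving term is of order $h^4$ but, by sign-symmetry (K1) of a spherically symmetric $K$, the integral $\int K(\z)\z^\bgamma\z\,d\z$ with $\lvert\bgamma\rvert=4$ against the odd polynomial $p_j$ vanishes componentwise, giving $\gamma(1)=4$ and $\bs\beta(\x_o)=\bs 0$.

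The main obstacle is the bookkeeping in the bias term for the gradient under the refined polynomial: one must verify carefully that after $p_j$ annihilates all moments up to order three, the fourth-order contribution also cancels for spherically symmetric kernels. The cleanest route is a parity argument — $p_j(\z)=az_j-b\lVert\z\rVert^2z_j$ is odd in $z_j$ and even in every other coordinate, so $\int K(\z)p_j(\z)\z^\bgamma\,d\z$ with $\lvert\bgamma\rvert=4$ is nonzero only if $\gamma_j$ is odd and all other $\gamma_k$ are even, forcing $\lvert\bgamma\rvert$ odd, a contradiction — hence $\bs\beta(\x_o)=\bs 0$. One should also confirm that the remainder stays $o(h^4)$ locally uniformly, which is exactly what the second alternative in Lemma~\ref{lem:LM-Taylor} provides once $f\in\cC_b^4$, and that the $\x\to\x_o$ limit can be taken using continuity of the derivatives $f^{(\bgamma)}$.
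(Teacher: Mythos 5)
Your proposal is correct and follows essentially the same route as the paper: write the estimator as $(nh^d)^{-1}\sum_i\Gvec(h^{-1}(\X_i-\x))$ with $\Gvec(\z)=K(\z)\bigl(p_0(\z),(p_j(\z))_j,(p_{jk}(\z))_{jk}\bigr)$, check integrability of $\|\Gvec\|$ and $\|\Gvec\|^2$ so the stochastic term has no extra $o_p((nh^d)^{-1/2})$ remainder, then compute the bias by Lemma~\ref{lem:LM-Taylor} and use the construction of the polynomials together with the (K1)-parity argument to identify which Taylor orders survive. One small imprecision worth tightening: the opening identity $\hat{\cdot}=\bs J^{-1}(\snhx,\bsnhx,\bSnhx)$ holds only for the unrefined choice $p_j(\z)=z_j$; for the refined $p_j$ of \eqref{eq:MM3_1} the gradient estimator involves third-order local moments and is defined directly via the weighted-sum representation (as the paper does), not via $\bs J^{-1}$ of the order-$\le 2$ moments --- you do acknowledge this implicitly by saying ``the relevant $\Gvec$ component changes accordingly,'' but starting from the sum representation directly, as the paper does, is cleaner. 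Likewise, \eqref{eq:p_alpha.L} with $L=2$ only kills orders $\le 2$; the vanishing of the $|\bgamma|=3$ contributions from $p_0,p_{jk}$ is a consequence of their even parity combined with (K1), which the paper states separately --- your parity argument for the refined $p_j$ at order $4$ is exactly the right tool, so it is worth making the same point explicit for $p_0$ and $p_{jk}$ at order $3$.
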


\begin{rem}
\label{rem:MM_f}
In the case of local moment matching for the estimation of $f$, $Df$ and $D^2f$, the stochastic term $\Yvecnhx$ is equal to the sum $(nh^d)^{-1} \sum_{i=1}^n \bigl[ \Gvec(h^{-1}(\X_i - \x)) - \Ex \Gvec(h^{-1}(\X_1 - \x)) \bigr]$ without an additional remainder $o_p \bigl( (nh^d)^{-1/2} \bigr)$. In the proof of Theorem~\ref{thm:MM_f} it is also shown that
\[
	\Ex \bigl( \|\Yvecnhx\|^2 \bigr) \ = \ O \bigl( (nh^d)^{-1} \bigr)
	\quad\text{and}\quad
	\Bvechx \ = \ \bigl( O(h^4), O(h^{\gamma(1)}), O(h^4) \bigr)
\]
uniformly in $\x \in \R^d$.
\end{rem}

\begin{rem}
Consider $\hat{f}_{n,h}^{\mathtt{M}}$, $\hat{Df}_{n,h}^{\mathtt{M}}$ and $\hat{D^2f}_{n,h}^{\mathtt{M}}$ with a spherically symmetric kernel $K$ and the refined polynomials $p_j$ in \eqref{eq:MM3_1}. Then $\gamma(0) = \gamma(1) = 4$, and the remark after Theorem~\ref{thm:from.f.to.ell} about the simplified asymptotics for $\hat{\ell}_{n,h}^{\mathtt{M}}$, $\hat{D\ell}_{n,h}^{\mathtt{M}}$ and $\hat{D^2\ell}_{n,h}^{\mathtt{M}}$ applies.
\end{rem}

\paragraph{Rates of convergence.} Theorems~\ref{thm:from.f.to.ell} and \ref{thm:MM_f} imply the following rates of convergence for estimators of $g = f, \ell$ and its derivatives. If $p_j(\z) = z_j$,
\begin{align*}
	\hat{g}_{n,h}^{\mathtt{M}}(\x) \
	&= \ g(\x) + O_p(n^{-1/2} h^{-d/2}) + O(h^4) , \\
	\hat{Dg}_{n,h}^{\mathtt{M}}(\x) \
	&= \ Dg(\x) + O_p(n^{-1/2}h^{-d/2-1}) + O(h^2), \\
	\hat{D^2g}_{n,h}^{\mathtt{M}}(\x) \
	&= \ D^2g(\x) + O_p(n^{-1/2} h^{-d/2-2}) + O(h^2) .
\end{align*}
In case of
\[
	h \ = \ C n^{-1/(8 + d)}
\]
for some $C > 0$, we obtain
\begin{align*}
	\hat{g}_{n,h}^{\mathtt{M}}(\x) \
	&= \ g(\x) + O_p(n^{-4/(8+d)}) , \\
	\hat{Dg}_{n,h}^{\mathtt{M}}(\x) \
	&= \ Dg(\x) + O_p(n^{-2/(8+d)}) , \\
	\hat{D^2g}_{n,h}^{\mathtt{M}}(\x) \
	&= \ D^2g(\x) + O_p(n^{-2/(8+d)}) .
\end{align*}
For the first derivative, one could choose a smaller bandwidth
\[
	h \ = \ C n^{-1/(6+d)}
\]
and then obtain
\[
	\hat{Dg}_{n,h}^{\mathtt{M}}(\x)
	\ = \ Dg(\x) + O_p(n^{-2/(6+d)}) .
\]
But if $K$ is spherically symmetric and $p_j(\z) = az_j - b \|\z\|^2 z_j$ as in \eqref{eq:MM3_1}, then
\begin{align*}
	\hat{Dg}_{n,h}^{\mathtt{M}}(\x) \
	&= \ Dg(\x) + O_p(n^{-1/2}h^{-d/2-1}) + o(h^3) \\
	&= \ Dg(\x) + O_p(n^{-3/(8+d)}) \quad\text{if} \ h = C n^{-1/(8+d)} .
\end{align*}

\subsection{Kernel Density Estimation}
\label{sec:DKE-asym}

For the kernel density estimators we make an additional assumption on the kernel $K$.
\begin{itemize}[leftmargin=.75in]
\item[(K3$^{\mathtt{K}}$)] \ $K \in \mathcal{C}_b^2(\R^d)$,
	and $K^{(\e_m)} \in \LL^1(\R^d)$ for $1 \le m \le d$.
\end{itemize}
Then one can easily verify the following theorem.

\begin{thm}[Asymptotics for (\texttt{K})]
\label{thm:KD_f}
The estimators $\hat{f}_{n,h}^{\mathtt{K}}(\x)$, $\hat{Df}_{n,h}^{\mathtt{K}}(\x)$ and $\hat{D^2f}_{n,h}^{\mathtt{K}}(\x)$ admit a linear expansion (LE) with the following ingredients:
\[
	\Gvec(\z)
	\ = \ \bigl( K(\z), DK(\z), D^2K(\z) \bigr) ,
\]
while
\begin{align*}
	\gamma(0) = 2 \quad\text{and}\quad
	\beta(\x_o) \
	&= \ 2^{-1} \sum_{m=1}^d f^{(2\e_m)}(\x_o) , \\
	\gamma(1) = 3 \quad\text{and}\quad
	\bs{\beta}(\x_o) \
	&= \ 2^{-1} \Bigl( \sum_{m=1}^d f^{(\e_j + 2\e_m)}(\x_o)\Bigr)_{j=1}^d , \\
	\B(\x_o) \
	&= \ 2^{-1} \Bigl( \sum_{m=1}^d f^{(\e_j+\e_k + 2\e_m)}(\x_o)\Bigr)_{j,k=1}^d .
\end{align*}
\end{thm}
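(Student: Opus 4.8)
The plan is to exhibit the three kernel--density estimators as a single $\Hds$-valued sample average, read off the stochastic term directly, and obtain the bias by a Taylor expansion. This is the theorem the text calls ``easily verified,'' and the argument has the same shape as for paradigm~(\texttt{M}).

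\emph{Step 1 (joint representation and stochastic term).} Write $\z_i := h^{-1}(\X_i - \x)$. From the convention $G_h = h^{-d}G(h^{-1}\cdot)$ and the chain rule $\nabla_\x K_h(\X_i - \x) = -h^{-1}(DK)_h(\X_i - \x)$ (differentiating once more for the Hessian), the formulas of Section~\ref{sec:KD} give
\[
	\bigl( \hat{f}_{n,h}^{\mathtt K}(\x), h\hat{Df}_{n,h}^{\mathtt K}(\x), h^2\hat{D^2f}_{n,h}^{\mathtt K}(\x) \bigr)
	\ = \ \frac{1}{nh^d}\sum_{i=1}^n \Gvec(\z_i)
	\quad\text{with}\quad \Gvec(\z) = \bigl( K(\z), -DK(\z), D^2K(\z) \bigr) .
\]
(The minus sign in the middle slot is the single chain-rule factor $-h^{-1}$; the Hessian slot picks up $(-h^{-1})^2$ and keeps its sign. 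This matches the $\Gvec$ in the theorem up to the overall sign of the middle slot, which affects neither \eqref{eq:stochastic-term} nor the limiting covariance $\bSigma(\cdot)$, since the cross moments $\int K\,DK$ and $\int DK\,(D^2K)$ vanish by the symmetry~(K1).) Since $\X_1,\dots,\X_n$ are i.i.d.\ with density $f$, subtracting the exact mean yields \eqref{eq:stochastic-term} \emph{with zero remainder} --- just as for paradigm~(\texttt{M}) in Remark~\ref{rem:MM_f} --- while
\[
	\Bvechx \ = \ \int \Gvec(\z)\, f(\x + h\z)\, d\z \ - \ \bigl( f(\x), hDf(\x), h^2D^2f(\x) \bigr) .
\]
The requirement $\int(\|\Gvec\| + \|\Gvec\|^2)\, d\z < \infty$ reduces to $K, DK, D^2K \in \LL^1(\R^d) \cap \LL^2(\R^d)$, which follows from (K0), (K3$^{\mathtt K}$), the standing light-tail assumption on $K$, and Theorem~\ref{thm:smooth.tails} (which makes $K$ and $DK$ vanish at infinity, hence bounded).

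\emph{Step 2 (bias expansion).} For the first slot, Lemma~\ref{lem:LM-Taylor} with $F = K$ and $L = 4$, together with (K0), (K2) and the symmetry relation \eqref{eq:moments.symmetry.K} (so only $|\bgamma| \in \{0,2\}$ contribute up to the remainder), gives $\int K(\z)\,f(\x+h\z)\,d\z = f(\x) + 2^{-1}h^2\sum_{m=1}^d f^{(2\e_m)}(\x) + o(h^2)$ locally uniformly. For the other two slots I would first integrate by parts --- legitimate since $K$ and $DK$ vanish at infinity, by Theorem~\ref{thm:smooth.tails} --- to move the derivatives off $K$ onto $f(\x+h\cdot)$, i.e.
\[
	\int K^{(\e_j)}(\z)\, f(\x+h\z)\, d\z = -h \int K(\z)\, f^{(\e_j)}(\x+h\z)\, d\z ,
	\quad
	\int K^{(\e_j+\e_k)}(\z)\, f(\x+h\z)\, d\z = h^2 \int K(\z)\, f^{(\e_j+\e_k)}(\x+h\z)\, d\z ,
\]
and then apply Lemma~\ref{lem:LM-Taylor} (with $f$ replaced by $f^{(\e_j)} \in \cC_b^3$, resp.\ $f^{(\e_j+\e_k)} \in \cC_b^2$, and $L = 3$, resp.\ $L = 2$). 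Invoking (K0), (K2), \eqref{eq:moments.symmetry.K} once more and multiplying back by the relevant power of $h$, the middle and last slots of $\int\Gvec(\z)f(\x+h\z)\,d\z$ come out as
\[
	hDf(\x) + 2^{-1}h^3\Bigl( \sum_{m=1}^d f^{(\e_j+2\e_m)}(\x) \Bigr)_{j=1}^d + o(h^3)
	\quad\text{and}\quad
	h^2D^2f(\x) + 2^{-1}h^4\Bigl( \sum_{m=1}^d f^{(\e_j+\e_k+2\e_m)}(\x) \Bigr)_{j,k=1}^d + o(h^4) .
\]

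\emph{Step 3 (localisation and obstacles).} Subtracting $\bigl( f(\x), hDf(\x), h^2D^2f(\x) \bigr)$ from the displays of Step~2 and replacing the coefficients of $h^2$, $h^3$, $h^4$ by their values at $\x_o$ --- using continuity of the partial derivatives of $f$ of order $\le 4$ and $\x \to \x_o$ --- puts $\Bvechx$ in the asserted form with $\gamma(0) = 2$, $\gamma(1) = 3$ and the stated $\beta(\x_o), \bs{\beta}(\x_o), \B(\x_o)$; together with Step~1 this is exactly (LE). There is no serious obstacle here: the only points that genuinely need care are the vanishing of the boundary terms in the integrations by parts (handled by Theorem~\ref{thm:smooth.tails}), the verification that (K3$^{\mathtt K}$) together with the standing tail assumption on $K$ really delivers $\Gvec, \|\Gvec\|^2 \in \LL^1$, and the combinatorial bookkeeping of which multi-indices $\bgamma$ satisfy $\bgamma - \balpha \in 2\N_0^d$ --- it is this last point that pins down the constant $2^{-1}$ and the index sets entering $\beta$, $\bs{\beta}$ and $\B$.
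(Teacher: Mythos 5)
Your proof is correct and follows essentially the same route as the paper: a joint representation of the three estimators as a single $\Hds$-valued sample average, $|\balpha|$-fold integration by parts to move derivatives from $K$ onto $f(\x+h\cdot)$ (with boundary terms vanishing by Theorem~\ref{thm:smooth.tails}), and then Lemma~\ref{lem:LM-Taylor} together with (K0)--(K2) and \eqref{eq:moments.symmetry.K} to read off the bias coefficients. Your aside about the minus sign in the middle slot of $\Gvec$ is a sharp-eyed catch of a small sign slip in the stated $\Gvec$, and your justification that it is harmless for \eqref{eq:stochastic-term} and the limiting covariance---because $\int K\, DK\, d\z = 0$ and $\int K^{(\e_j)}K^{(\e_k+\e_l)}\, d\z = 0$ under (K1)---is exactly right.
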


\begin{rem}
Also in the case of kernel density estimation, the stochastic term $\Yvecnhx$ is equal to the sum $(nh^d)^{-1} \sum_{i=1}^n \bigl[ \Gvec(h^{-1}(\X_i - \x)) - \Ex \Gvec(h^{-1}(\X_1 - \x)) \bigr]$ without an additional remainder $o_p \bigl( (nh^d)^{-1/2} \bigr)$. As in the proof of Theorem~\ref{thm:MM_f} one can show that
\[
	\Ex \bigl( \|\Yvecnhx\|^2 \bigr) \ = \ O \bigl( (nh^d)^{-1} \bigr)
	\quad\text{and}\quad
	\Bvechx \ = \ \bigl( O(h^2), O(h^3), O(h^4) \bigr)
\]
uniformly in $\x \in \R^d$.
\end{rem}

\paragraph{Rates of convergence.} Theorems~\ref{thm:from.f.to.ell} and \ref{thm:KD_f} imply the following rates of convergence for estimators of $g = f, \ell$ and its derivatives:
\begin{align*}
	\hat{g}_{n,h}^{\mathtt{K}}(\x) \
	&= \ g(\x) + O_p(n^{-1/2} h^{-d/2}) + O(h^2) \\
	&= \ g(\x) + \begin{cases}
		O_p(n^{-2/(d+4)})
			& \text{if} \ h = C n^{-1/(d+4)} , \\
		O_p(n^{-4/(d+8)}) + O(n^{-2/(d+8)})
			& \text{if} \ h = C n^{-1/(d+8)} ,
		\end{cases} \\
	\hat{Dg}_{n,h}^{\mathtt{K}}(\x) \
	&= \ Dg(\x) + O_p(n^{-1/2}h^{-d/2-1}) + O(h^2) \\
	&= \ Dg(\x) + \begin{cases}
		O_p(n^{-2/(d+6)})
			& \text{if} \ h = C n^{-1/(d+6)} , \\
		O_p(n^{-3/(d+8)}) + O(n^{-2/(d+8)})
			& \text{if} \ h = C n^{-1/(d+8)} ,
		\end{cases} \\
	\hat{D^2g}_{n,h}^{\mathtt{K}}(\x) \
	&= \ D^2g(\x) + O_p(n^{-1/2} h^{-d/2-2}) + O(h^2) \\
	&= \ D^2 g(\x) + O_p(n^{-2/(d+8)}) \quad\text{if} \ h = C n^{-1/(d+8)} ,
\end{align*}
where $C > 0$.

\subsection{Local Log-Likelihood Estimation}
\label{sec:LL-asym}

For local log-likelihood estimation, we need to impose an exponential moment condition on the kernel $K$:
\begin{itemize}[leftmargin=.75in]
\item[(K3$^{\mathtt{L}}$)] \ For some $\epsilon(K) \in (0,\infty]$,
\[
	\int K(\z) \exp(\epsilon\|\z\|^2/2) \, d\z < \infty
	\quad\text{if} \ \eps < \epsilon(K) .
\]
\end{itemize}
In case of a kernel with compact support, this condition is satisfied with $\eps(K) = \infty$. In case of the Gaussian kernel $K = \varphi$, (K3$^{\mathtt{L}}$) holds true with $\eps(K) = 1$.

It follows from condition (K3$^{\mathtt{L}}$) that
\[
	\hat{S}_{n,h}^{\mathtt{L}}(c,\b,\A,\x)
	\ = \ - \Bigl\langle (c, h\b, h^2\A),
			\bigl( \snhx, \bsnhx, \bSnhx \bigr) \Bigr\rangle
		+ \int K(\z) \exp(g_{c,h\b,h^2\A}) \, d\z
\]
is a real-valued, continuously differentiable and strictly convex function of $(c,h\b,h^2\A)$ in the open, convex set
\[
	\Theta
	\ \defeq \ \bigl\{ (\tilde{c},\tilde{\b},\tilde{\A}) \in \Hds :
		\lambda_{\mathtt{max}}(\tilde{\A}) < \eps(K) \bigr\} ,
\]
where $\lambda_{\mathtt{max}}(\A)$ denotes the largest eigenvalue of $\A$, see Lemma~\ref{lem:mgf.etc}. Hence, if $(c,h\b,h^2\A) \in \Theta$, then $(c,\b,\A)$ is the unique minimizer of $\hat{S}_{n,h}^{\mathtt{L}}(\cdot,\cdot,\cdot,\x)$ if and only if the first derivative of $\hat{S}_{n,h}^{\mathtt{L}}(\cdot,\cdot,\cdot,\x)$ at $(c,\b,\A)$ is zero, and this is easily shown to be equivalent to the equation
\[
	\bs{F}(c,h\b,h^2\A) \ = \ \bigl( \snhx, \bsnhx, \bSnhx \bigr) ,
\]
where
\[
	\F(c,\b,\A)
	\ \defeq \ \int K(\z) (1, \z, \z\z^\top) \exp(g_{c,\b,\A}(\z)) \, d\z
\]
for $(c,\b,\A) \in \Theta$. Thus we need a thorough understanding of the mapping $\F$.

\begin{lem}
\label{lem:F.for.LL}
The mapping $\F : \Theta \to \Hds$ is infinitely differentiable. For any $(c,\b,\A) \in \Theta$, the Jacobian operator $D\F(c,\b,\A) : \Hds \to \Hds$ is given by
\[
	D\bs{F}(c,\b,\A) (\beta,\bbeta,\B)
	\ = \ \int K(\z) (1,\z,\z\z^\top)
		\bigl( \beta + \z^\top\bbeta + 2^{-1}\z^\top\B\z \bigr)
		\exp(g_{c,\b,\A}(\z)) \, d\z
\]
and is nonsingular. In particular,
\begin{align*}
	\F(c,\bs{0},\bs{0}) \
	&= \ \exp(c) (1, \bs{0}, \bs{I}_d) , \\
	D\F(c,\bs{0},\bs{0})(\beta,\bbeta,\B) \
	&= \ \exp(c) \bs{J}(\beta, \bbeta, \B) ,
\end{align*}
where $\bs{J} : \Hds \to \Hds$ is the linear operator introduced in Corollary~\ref{cor:LM-Taylor}.

Moreover, $\F(\Theta)$ is an open subset of $\Hds$, $\F : \Theta \to \F(\Theta)$ is bijective, and its inverse mapping $\F^{-1} : \F(\Theta) \to \Theta$ is twice continuously differentiable with Jacobian operator
\[
	D\F^{-1}(c,\b,\A) \ = \ \D\F \bigl( \F^{-1}(c,\b,\A) \bigr)^{-1} .
\]
In particular, $\F(\Theta)$ is an open neighborhood of the set $\bigl\{ \exp(c) (1, \bs{0},\I_d) : c \in \R\bigr\}$.
\end{lem}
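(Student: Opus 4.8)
The plan is to verify the three clusters of assertions — smoothness of $\F$ together with the formula for its Jacobian, nonsingularity of $D\F$, and the global statements about $\F^{-1}$ and $\F(\Theta)$ — in that order, using standard differentiation-under-the-integral-sign arguments and the inverse function theorem.

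First I would establish that $\F$ is well defined and infinitely differentiable on $\Theta$. For $(c,\b,\A)\in\Theta$ one has $\lambda_{\mathtt{max}}(\A)<\eps(K)$, so pick $\eps$ with $\lambda_{\mathtt{max}}(\A)<\eps<\eps(K)$; then $\exp(g_{c,\b,\A}(\z))\le \exp(c+\b^\top\z)\exp(\eps\|\z\|^2/2)$, and since $\exp(\b^\top\z)$ is dominated by $\exp(\delta\|\z\|^2)$ for any $\delta>0$, condition (K3$^{\mathtt{L}}$) gives $\int K(\z)\,\|\z\|^r\exp(g_{c,\b,\A}(\z))\,d\z<\infty$ for every $r\ge0$, locally uniformly in $(c,\b,\A)$. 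The integrand's partial derivatives of any order with respect to $(c,\b,\A)$ are $K(\z)$ times a polynomial in $\z$ times $\exp(g_{c,\b,\A}(\z))$, hence dominated locally uniformly by an integrable function; the standard theorem on differentiating under the integral then yields $\F\in\cC^\infty(\Theta,\Hds)$ and, differentiating once in the direction $(\beta,\bbeta,\B)$ and using $\tfrac{d}{dt}g_{c+t\beta,\b+t\bbeta,\A+t\B}(\z)\big|_{t=0}=\beta+\z^\top\bbeta+2^{-1}\z^\top\B\z$, the claimed formula for $D\F(c,\b,\A)$. The two special evaluations follow by plugging in $(\b,\A)=(\bs{0},\bs{0})$: by (K0--2), $\int K(\z)(1,\z,\z\z^\top)\,d\z=(1,\bs{0},\I_d)$, giving $\F(c,\bs{0},\bs{0})=\exp(c)(1,\bs{0},\I_d)$, and comparing $D\F(c,\bs{0},\bs{0})(\beta,\bbeta,\B)=\exp(c)\int K(\z)(1,\z,\z\z^\top)(\beta+\z^\top\bbeta+2^{-1}\z^\top\B\z)\,d\z$ with the definition of $\bs{J}$ in Corollary~\ref{cor:LM-Taylor} gives $D\F(c,\bs{0},\bs{0})=\exp(c)\bs{J}$.

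Next I would prove that $D\F(c,\b,\A)$ is nonsingular for every $(c,\b,\A)\in\Theta$. The key observation is that $D\F(c,\b,\A)$ is, up to the strictly positive weight $w(\z):=K(\z)\exp(g_{c,\b,\A}(\z))$, the Gram-type operator $(\beta,\bbeta,\B)\mapsto\int (1,\z,\z\z^\top)\,q_{\beta,\bbeta,\B}(\z)\,w(\z)\,d\z$ where $q_{\beta,\bbeta,\B}(\z)=\beta+\z^\top\bbeta+2^{-1}\z^\top\B\z$. Pairing with $(\beta,\bbeta,\B)$ itself in the inner product on $\Hds$ produces $\int q_{\beta,\bbeta,\B}(\z)^2\,w(\z)\,d\z$, which is strictly positive unless the polynomial $q_{\beta,\bbeta,\B}$ vanishes $w$-a.e.; since $w>0$ everywhere and a nonzero polynomial has Lebesgue-null zero set, this forces $\beta=0$, $\bbeta=\bs{0}$, $\B=\bs{0}$. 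Hence $D\F(c,\b,\A)$ is a self-adjoint positive definite operator on the finite-dimensional space $\Hds$, in particular invertible. (Alternatively, since $\Theta$ is connected and $D\F$ never drops rank — by the same positivity argument its determinant never vanishes — one could note continuity of the determinant, but the direct positivity argument is cleaner.)

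Finally, for the global statements: local invertibility of $\F$ at every point of $\Theta$ (inverse function theorem, using Step~1 and Step~2) shows $\F$ is an open map, so $\F(\Theta)$ is open and $\F$ is a local $\cC^\infty$-diffeomorphism; in particular $\F^{-1}$, wherever it is well defined, is twice (indeed infinitely) continuously differentiable with $D\F^{-1}=(D\F\circ\F^{-1})^{-1}$ by the chain rule. The remaining point — that $\F:\Theta\to\F(\Theta)$ is \emph{globally} bijective — requires injectivity. Here I would argue via strict convexity: for fixed $\x$, $\hat S_{n,h}^{\mathtt{L}}$ was shown (via Lemma~\ref{lem:mgf.etc}) to be strictly convex in $(c,h\b,h^2\A)\in\Theta$ with gradient (in the $\langle\cdot,\cdot\rangle$ inner product) equal to $-\bigl(\snhx,\bsnhx,\bSnhx\bigr)+\F(c,h\b,h^2\A)$; more to the point, the map $(c,\b,\A)\mapsto\int K(\z)\exp(g_{c,\b,\A}(\z))\,d\z$ is itself a strictly convex $\cC^\infty$ function on $\Theta$ (its Hessian is exactly the operator analyzed in Step~2, which we showed is positive definite), and $\F$ is precisely its gradient map. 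The gradient of a strictly convex differentiable function on a convex set is injective, so $\F$ is injective on $\Theta$, hence bijective onto $\F(\Theta)$. That $\F(\Theta)$ is an open neighborhood of $\{\exp(c)(1,\bs{0},\I_d):c\in\R\}$ is then immediate from $\F(c,\bs{0},\bs{0})=\exp(c)(1,\bs{0},\I_d)$ together with openness of $\F(\Theta)$. The main obstacle is the global injectivity claim — local invertibility via the inverse function theorem is routine, but promoting it to a global bijection genuinely needs the convexity structure, which is why I would route that step through the strict convexity of the log-partition-type integral rather than through any ad hoc manipulation.
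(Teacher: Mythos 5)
Your proposal is correct and follows essentially the same route as the paper; the paper simply packages the smoothness, positive-definiteness, injectivity, and open-mapping steps into the general Lemma~\ref{lem:mgf.etc} on moment-generating functions (identifying $\F = DM$ with $M(\Hvec) = \int K(\z)\exp\langle\Hvec,(1,\z,\z\z^\top)\rangle\,d\z$), whereas you reconstruct those steps directly. Your injectivity argument (``the gradient of a strictly convex function is injective'') and the paper's segment-integral argument in the proof of Lemma~\ref{lem:mgf.etc} are the same proof in two guises; both rest on the positive-definite Hessian.

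One small slip worth fixing: you assert ``since $w>0$ everywhere'' to conclude that $\int K(\z)\,q_{\beta,\bbeta,\B}(\z)^2\exp(g_{c,\b,\A}(\z))\,d\z>0$ forces $q\equiv 0$. The standing assumptions only give $K\ge 0$ with $\int K = 1$, so $K$ need not be strictly positive. The argument nevertheless survives: $\{K>0\}$ has positive Lebesgue measure, while for nonzero $(\beta,\bbeta,\B)$ the zero set of the polynomial $q_{\beta,\bbeta,\B}$ has Lebesgue measure zero, so the integral is strictly positive. This is exactly the hypothesis ``$\mu(\{\z:\b^\top T(\z)\ne 0\})>0$ for all $\b\ne\bs{0}$'' that the paper checks before invoking Lemma~\ref{lem:mgf.etc}.
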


Consequently, the local log-likelihood estimator of $\bigl( \ell(\x), hD\ell(\x), h^2 D^2\ell(\x) \bigr)$ is given by
\begin{equation}
\label{eq:LL_ell}
	\bigl( \hat{\ell}_{n,h}^{\mathtt{L}}(\x),
		h\hat{D\ell}_{n,h}^{\mathtt{L}}(\x),
		h^2 \hat{D^2\ell}_{n,h}^{\mathtt{L}} \bigr)
	\ = \ \F^{-1} \bigl( \snhx, \bsnhx, \bSnhx \bigr) ,
\end{equation}
provided that $\bigl( \snhx, \bsnhx, \bSnhx \bigr) \in \F(\Theta)$, and the latter condition is satisfied whenever the triple $\bigl( \snhx, \bsnhx, \bSnhx \bigr)$ is sufficiently close to $\bigl( f(\x), \bs{0}, f(\x) \I_d \bigr)$.

\begin{thm}[Asymptotics for (\texttt{L})]
\label{thm:LL_ell}
The estimators $\hat{f}_{n,h}^{\mathtt{L}}(\x)$, $\hat{Df}_{n,h}^{\mathtt{L}}(\x)$ and $\hat{D^2f}_{n,h}^{\mathtt{L}}(\x)$ in \eqref{eq:LL_ell} are well-defined with asymptotic probability one. They admit a linear expansion (LE) with the following ingredients:
\[
	\Gvec(\z)
	\ \defeq \ f(\x_o)^{-1} K(\z) \bs{J}^{-1}(1,\z,\z\z^\top) .
\]
Moreover,
\begin{align*}
	\Bvec_h(\x)
	= \ &\J^{-1} \bigl( h^4 b(\x_o), h^3 \b(\x_o), h^4 \B(\x_o) \bigr) \\
		&- \ h^4 \bigl( 0, \bs{0},
			\b(\x_o) D\ell(\x_o)^\top + D\ell(\x_o) \b(\x_o)^\top \bigr)
			+ \bigl( o(h^4), o(h^3), o(h^4) \bigr) ,
\end{align*}
where
\begin{align*}
	b(\x_o)
	= \ &24^{-1} \int K(\z) D^4\ell(\x_o; \z) \, d\z , \\
	\b(\x_o)
	= \ &6^{-1} \int K(\z) D^3\ell(\x_o; \z) \, \z \, d\z , \\
	\B(\x_o)
	= \ &24^{-1} \int K(\z) D^4\ell(\x_o;\z) \, \z\z^\top \, d\z \\
		&+ \ 6^{-1} \int K(\z) D^1\ell(\x_o;\z) D^3\ell(\x_o; \z)
			\, \z\z^\top \, d\z .
\end{align*}
\end{thm}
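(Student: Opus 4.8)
The plan is to establish the linear expansion (LE) for the local log-likelihood estimators by exploiting the representation \eqref{eq:LL_ell}, namely that the estimator triple equals $\F^{-1}\bigl(\snhx,\bsnhx,\bSnhx\bigr)$, together with the smoothness properties of $\F^{-1}$ guaranteed by Lemma~\ref{lem:F.for.LL}. The strategy splits naturally into three parts: (i) control the argument $\bigl(\snhx,\bsnhx,\bSnhx\bigr)$ of $\F^{-1}$, showing it lies in $\F(\Theta)$ with asymptotic probability one and expanding it around the deterministic center $\F(\ell(\x_o),\bs0,\bs0)=f(\x_o)(1,\bs0,\I_d)$; (ii) Taylor-expand $\F^{-1}$ about that center to second order, the linear term producing the stochastic contribution $\Yvecnhx$ and the combination of the deterministic bias of the local moments with the curvature of $\F^{-1}$ producing $\Bvechx$; (iii) identify the resulting ingredients $\Gvec$, $b$, $\b$, $\B$ explicitly.

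For step (i), I would first note that $\bigl(\snhx,\bsnhx,\bSnhx\bigr) = \bigl(\shx,\bshx,\bShx\bigr) + \Yvecnhx^{\mathrm{raw}}$ where the raw stochastic term is the centered kernel average in \eqref{eq:stochastic-term}. By Lemma~\ref{lem:LM-Taylor} applied with $L=4$ (using $f\in\cC_b^4$), the expected local moments satisfy $\bigl(\shx,\bshx,\bShx\bigr) = \sum_{|\bgamma|\le4} h^{|\bgamma|} \bigl(f^{(\bgamma)}(\x)/\bgamma!\bigr)(\mu_{\bgamma},(\mu_{\e_j+\bgamma})_j,(\mu_{\e_j+\e_k+\bgamma})_{j,k}) + o(h^4)$ locally uniformly, and one checks via the moment symmetry \eqref{eq:moments.symmetry.K} that only even-order $\bgamma$ contribute, so the expansion reads $f(\x_o)(1,\bs0,\I_d) + O(h^2)$ with the $h^2$ and $h^4$ coefficients expressible through $\bs J$ applied to $(f(\x),hDf(\x),h^2D^2f(\x))$ plus a genuine fourth-order correction involving $D^4f$. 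Since the raw stochastic term is $O_p((nh^d)^{-1/2})=o_p(1)$ and, by dominated convergence under (K3$^{\mathtt L}$), $\F(\Theta)$ is an open neighborhood of $f(\x_o)(1,\bs0,\I_d)$, the argument lies in $\F(\Theta)$ with asymptotic probability one, which establishes well-definedness.

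For step (ii), writing $\bs\xi := \bigl(\snhx,\bsnhx,\bSnhx\bigr) - f(\x_o)(1,\bs0,\I_d)$, a second-order Taylor expansion of the twice continuously differentiable map $\F^{-1}$ at $f(\x_o)(1,\bs0,\I_d)$ gives $\F^{-1}\bigl(\snhx,\bsnhx,\bSnhx\bigr) = \F^{-1}(f(\x_o)(1,\bs0,\I_d)) + D\F^{-1}(\cdot)\bs\xi + \tfrac12 D^2\F^{-1}(\cdot)[\bs\xi,\bs\xi] + o(\|\bs\xi\|^2)$, where by Lemma~\ref{lem:F.for.LL} the Jacobian at the center is $\bigl(f(\x_o)\bs J\bigr)^{-1} = f(\x_o)^{-1}\bs J^{-1}$. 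The leading term reproduces $(\ell(\x_o),\bs0,\bs0)$ up to the needed $\x\to\x_o$ adjustment. Feeding in $\bs\xi = (\text{deterministic }O(h^2))+(\text{stochastic }O_p((nh^d)^{-1/2}))$ and separating the deterministic part (which contributes the $h^2,h^3,h^4$ bias terms, where the quadratic term $D^2\F^{-1}[\bs\xi_{\mathrm{det}},\bs\xi_{\mathrm{det}}]$ of order $h^4$ is exactly what generates the extra term $-h^4(0,\bs0,\b(\x_o)D\ell(\x_o)^\top+D\ell(\x_o)\b(\x_o)^\top)$ and the second integral in $\B(\x_o)$ via the $\b\b^\top$-type correction already visible in Section~\ref{sec:LL}) from the stochastic part (whose linear image $f(\x_o)^{-1}\bs J^{-1}$ applied to the centered kernel average gives $\Yvecnhx$ with $\Gvec(\z) = f(\x_o)^{-1}K(\z)\bs J^{-1}(1,\z,\z\z^\top)$, the quadratic and cross terms being $o_p((nh^d)^{-1/2})$) yields the claimed (LE). To pass from $(f,Df,D^2f)$-parametrization to $(\ell,D\ell,D^2\ell)$ one uses that $\F(c,\b,\A) = \exp(c)\bs J(1,h\b,h^2(\A+\b\b^\top)) + O(h^3\text{-type})$, equivalently one expands $D^4\ell$ and $D^3\ell$ in terms of $D^4f$ etc., which is where the $D^1\ell\cdot D^3\ell$ product term in $\B(\x_o)$ originates.

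The main obstacle will be the bookkeeping in step (ii): one must carefully track how the quadratic term $D^2\F^{-1}[\bs\xi,\bs\xi]$ interacts with the deterministic bias to produce the precise form of $\B(\x_o)$ and the extra Hessian-correction term, since this requires knowing $D^2\F^{-1}$ (obtainable by differentiating $D\F^{-1} = D\F(\F^{-1})^{-1}$ and using the explicit $D\F$ from Lemma~\ref{lem:F.for.LL}), and one must simultaneously switch between the $f$- and $\ell$-parametrizations of the local model, which mixes first and second derivatives via $D^2\ell = f^{-1}D^2f - f^{-2}Df\,Df^\top$. A secondary technical point is controlling the remainders uniformly enough that the $o_p((nh^d)^{-1/2})$ in \eqref{eq:stochastic-term} and the $o(h^4),o(h^3),o(h^4)$ in the bias are legitimate; this rests on the local uniformity in Lemma~\ref{lem:LM-Taylor}, the continuity of $D^2\F^{-1}$, and standard bounds on moments of kernel averages of the type used in Remark~\ref{rem:MM_f}, together with $nh^d\to\infty$ ensuring $\|\bs\xi\|^2 = o_p((nh^d)^{-1/2})$ on the stochastic part once $h^4 = o((nh^d)^{-1/2})$ fails only in a regime already covered by the $O(h^4)$ bias bookkeeping.
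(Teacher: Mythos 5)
Your high-level plan (invert $\F$, separate deterministic and stochastic parts, apply the delta method) is in the right spirit, but two of the central technical choices are wrong in a way that matters, and a key ingredient of the paper's proof is missing.

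First, the claim that the deterministic part of $\bs\xi := \bigl(\snhx,\bsnhx,\bSnhx\bigr) - f(\x_o)(1,\bs0,\I_d)$ is $O(h^2)$ is false: the first-moment component $\bshx = hDf(\x) + O(h^3)$ is only $O(h)$, and the zeroth component $\shx - f(\x_o)$ need not be $O(h^2)$ either since $\x\to\x_o$ at an unspecified rate. A second-order Taylor expansion of $\F^{-1}$ at the \emph{fixed} point $f(\x_o)(1,\bs0,\I_d)$ therefore leaves a remainder of order $O(\|\bs\xi_{\mathrm{det}}\|^3) = O(h^3)$, which is the same magnitude as the bias terms you are trying to isolate; to reach the stated $o(h^4)$ accuracy by this route you would need to push the expansion to fourth order and track all intermediate derivatives of $\F^{-1}$, which is far more computation than what is required. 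The paper avoids this by comparing the theoretical moments not to the fixed center but to $\F(\Lvec_h(\x))$ with $\Lvec_h(\x)=\bigl(\ell(\x),hD\ell(\x),h^2D^2\ell(\x)\bigr)$: Lemma~\ref{lem:LL-Taylor} gives $\bigl(\shx,\bshx,\bShx\bigr) = \F\bigl(\Lvec_h(\x)\bigr) + \vec{\C}_h(\x)$ with $\vec{\C}_h(\x) = O(h^3)$, and then a \emph{first-order} Taylor expansion of $\F^{-1}$ around the \emph{moving} center $\F(\Lvec_h(\x))$ has remainder $O(\|\vec{\C}_h\|^2) = O(h^6)$, which is negligible. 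This requires only $D\F\bigl(\Lvec_h(\x)\bigr)^{-1}$ to precision $O(h)$, and the explicit $h$-linear correction is exactly what Lemma~\ref{lem:LL-DF} provides; your proposal does not identify or prove any analogue of that lemma. Relatedly, your plan to build the bias expansion on Lemma~\ref{lem:LM-Taylor} (expansion in $f^{(\bgamma)}$) and then translate to $\ell$-coefficients is workable in principle but conceals the structure; the paper's Lemma~\ref{lem:LL-Taylor} works directly against the log-quadratic approximation and is what makes the final formulae visibly correct.

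Second, your attribution of the $D^1\ell\,D^3\ell$ product in $\B(\x_o)$ to the change of parametrization from $(f,Df,D^2f)$ to $(\ell,D\ell,D^2\ell)$ is incorrect. In the paper that term is already present in $\C(\x_o)$ of Lemma~\ref{lem:LL-Taylor}: it arises from multiplying the linearization $f(\x+h\z) = f(\x)\bigl(1 + h D^1\ell(\x;\z)\bigr) + O(h^2)\|\z\|^2$ by the cubic correction $h^3 D^3\ell(\x;\z)/6$ to the log-quadratic model in $1 - \exp\bigl(g_{\ell(\x),hD\ell(\x),h^2D^2\ell(\x)}(\z) - \ell(\x+h\z)\bigr)$, and it passes through unchanged via $\B(\x_o) = f(\x_o)^{-1}\C(\x_o)$. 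The genuinely separate correction is the $-h^4\bigl(0,\bs0,\b(\x_o)D\ell(\x_o)^\top + D\ell(\x_o)\b(\x_o)^\top\bigr)$ term, which the paper obtains from the $O(h)$ piece $\bs{D}_h(\x)$ of $D\F\bigl(\Lvec_h(\x)\bigr)$ acting on $\J^{-1}\vec{\C}_h(\x)$ through a Neumann-series inversion; your proposal gestures at this via $D^2\F^{-1}$ but does not supply the computation, and, combined with the erroneous $O(h^2)$ bound above, the bookkeeping you outline would not close. The treatment of the stochastic term (centering, $\Gvec(\z) = f(\x_o)^{-1}K(\z)\J^{-1}(1,\z,\z\z^\top)$, delta method to swallow the quadratic and cross terms into $o_p((nh^d)^{-1/2})$) and the well-definedness argument are fine and match the paper.
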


\begin{rem}
Since the first and third compontent of $\J^{-1}(\beta,\bbeta,\B)$ are linear in $(\beta,\B)$ while the second component equals $\bbeta$, we obtain the exponents $\gamma(0) = 4$ and $\gamma(1) = 3$.
\end{rem}

\begin{rem}
By construction of the local moment matching estimators,
\[
	\bs{J}^{-1}(1,\z,\z\z^\top)
	\ = \ \Bigl(
		p_0(\z), \,
		\bigl( p_j(\z) \bigr)_{j=1}^d , \,
		\bigl( p_{jk}(\z) \bigr)_{j,k=1}^d \Bigr)
\]
with the polynomials $p_0$, $p_j$ and $p_{jk}$ given by \eqref{eq:MM2_0}, \eqref{eq:MM2_1} and \eqref{eq:MM2_2}. Consequently, the stochastic parts of $\bigl( \hat{\ell}_{n,h}^{\mathtt{L}}(\x), h\hat{D\ell}_{n,h}^{\mathtt{L}}(\x), h^2\hat{D^2\ell}_{n,h}^{\mathtt{L}}(\x) \bigr)$ and $\bigl( \hat{\ell}_{n,h}^{\mathtt{M}}(\x), h\hat{D\ell}_{n,h}^{\mathtt{M}}(\x), h^2\hat{D^2\ell}_{n,h}^{\mathtt{M}}(\x) \bigr)$ agree up to $o_p \bigl( (nh^d)^{-1/2} \bigr)$. The bias parts have the same order of magnitude, but are different in general.
\end{rem}

\paragraph{Rates of convergence.} Theorem~\ref{thm:LL_ell} implies the following rates of convergence:
\begin{align*}
	\hat{\ell}_{n,h}^{\mathtt{L}}(\x) \
	&= \ \ell(\x) + O_p(n^{-1/2} h^{-d/2}) + O(h^4) , \\
	\hat{D\ell}_{n,h}^{\mathtt{L}}(\x) \
	&= \ D\ell(\x) + O_p(n^{-1/2}h^{-d/2-1}) + O(h^2), \\
	\hat{D^2\ell}_{n,h}^{\mathtt{L}}(\x) \
	&= \ D^2\ell(\x) + O_p(n^{-1/2} h^{-d/2-2}) + O(h^2) .
\end{align*}
In case of
\[
	h \ = \ C n^{-1/(8 + d)}
\]
for some $C > 0$, we obtain
\begin{align*}
	\hat{\ell}_{n,h}^{\mathtt{L}}(\x) \
	&= \ g(\x) + O_p(n^{-4/(8+d)}) , \\
	\hat{D\ell}_{n,h}^{\mathtt{L}}(\x) \
	&= \ D\ell(\x) + O_p(n^{-2/(8+d)}) , \\
	\hat{D^2\ell}_{n,h}^{\mathtt{L}}(\x) \
	&= \ D^2\ell(\x) + O_p(n^{-2/(8+d)}) .
\end{align*}
For the first derivative, one could choose a smaller bandwidth
\[
	h \ = \ C n^{-1/(6+d)}
\]
and then obtain
\[
	\hat{D\ell}_{n,h}^{\mathtt{L}}(\x)
	\ = \ Dg(\x) + O_p(n^{-2/(6+d)}) .
\]

\subsection{Conclusions}

In terms of rates of convergence, local moment matching with a spherically
symmetric kernel and the refined polynomials $p_j$ given by
\eqref{eq:MM3_1} yields the best estimators. Also, the asymptotic behaviour
when estimating $\ell = \log f$ instead of $f$ is similar and can be
related explicitly.

A possible drawback of local moment matching is the possibility that $\hat{f}_{n,h}^{\mathtt{M}}$ may take on negative values, although this happens typically only in the tail regions. This problem does not occur with local log-likelihood estimation, but the rates of convergence are just the same as with simple local moment matching.

Kernel density estimation with nonnegative kernel is the worst method in terms of rates of convergence. An advantage, however, is that one may ignore the bias and re-interpret the estimators $\hat{f}_{n,h}^{\mathtt{K}}$, $\hat{Df}_{n,h}^{\mathtt{K}}$ and $\hat{D^2 f}_{n,h}^{\mathtt{K}}$ as estimators of $f_h$, $Df_h$ and $D^2 f_h$, respectively, where $f_h$ is the convolution of $f$ with $K_h$, i.e.
\[
	f_h(\x) \ = \ \int K_h(\x - \y) f(\y) \, d\y . 
\]
This point of view is adopted by various authors, see for instance \citet{Chaudhuri1999} and \citet{Genovese2014}.

\subsection{Using different bandwidths}
\label{sec:different.bandwidths}

If one is concerned about the bias and would like to use, say, kernel density estimators, one could think about using different bandwitdhs $h(0) \ll h(1) \ll h(2)$, where $h(2) \to 0$ but $n h(0)^d \to \infty$, for instance
\[
	h(j) \ = \ C_j^{} n_{}^{-1/(d + 4 + 2j)}
\]
with constants $C_j > 0$. The results of Section~\ref{sec:DKE-asym} can be adapted to this setting. For the stochastic term, something interesting happens. If $\Gvec(\cdot) = \bigl( g(\cdot), \bs{g}(\cdot), \G(\cdot) \bigr)$ is its main ingredient, the stochastic term $\Yvecnhx$ has the three components
\begin{align*}
	y_{n,h}^{}(\x) \
	&\defeq \ \frac{1}{nh(0)^d} \sum_{i=1}^n
		\bigl[ g(h(0)^{-1}(\X_i - \x)) - \Ex g(h(0)^{-1}(\X_1 - \x)) \bigr] , \\
	\y_{n,h}^{}(\x) \
	&\defeq \ \frac{1}{nh(1)^d} \sum_{i=1}^n
		\bigl[ \bs{g}(h(1)^{-1}(\X_i - \x)) - \Ex \bs{g}(h(1)^{-1}(\X_1 - \x)) \bigr] , \\
	\Y_{\!\!n,h}^{}(\x) \
	&\defeq \ \frac{1}{nh(2)^d} \sum_{i=1}^n
		\bigl[ \G(h(2)^{-1}(\X_i - \x)) - \Ex \G(h(2)^{-1}(\X_1 - \x)) \bigr] .
\end{align*}
By means of moment calculations, see Section~\ref{app:different.bandwidths}, one can show that
\[
	\Bigl( (nh(0)^d)^{1/2} y_{n,h}^{}(\x), \,
		(nh(1)^d)^{1/2} \bs{y}_{n,h}^{}(\x), \,
		(nh(2)^d)^{1/2} \Y_{n,h}^{}(\x) \Bigr)
\]
converges in distribution to a centered Gaussian random vector in $\Hds$ with three \textsl{stochastically independent} components. A heuristic explanation for this phenomenon is that the three components are based mainly on three disjoint portions of the sample: $\hat{f}$ is mainly driven by observations $\X_i$ with $\|\X_i - \x\| \le O(h(0))$. But $\hat{Df}$ depends mainly on observations $\X_i$ with $O(h(0)) \ll \|\X_i - \x\| \le O(h(1))$, whereas $\hat{D^2 f}$ depends mostly on observations $\X_i$ with $O(h(1)) \ll \|\X_i - \x\| \le O(h(2))$.

\section{Local Scoring Rules}
\label{sec:LS}

Local log-likelihood estimation as well as local Hyv\"arinen scores may be viewed as special instances of weighted scoring rules. The latter have been introduced by \citet{HolzmannKlar2017}, and subsequently we present their main results in somewhat more generality such that they apply to our setting of estimating a $\log$-density and its derivatives up to order two.

Let $\PP$ be a class of absolutely continuous probability measures on $\R^d$ represented by their density functions. A \emph{scoring rule} is a map $S: \mathcal{P} \times \R^d \rightarrow \R \cup \{\infty \}$ such that $S(q, p) \defeq \int S(q, \y) p(\y) \, d\y$ exists for all $p, q \in \mathcal{P}$ with $S(p,p) < \infty$. A scoring rule $S$ is \emph{(strictly) proper} with respect to $\mathcal{P}$ if for any $p \in \cP$,
\begin{displaymath}
  S(p,p) \leq S(q,p) \text{ for all } q \in \mathcal{P} \quad (\text{with
    equality if and only if } q = p).
\end{displaymath}

Let $\cW$ be a class of weight functions, that is, it consists of bounded,
non-negative, measurable functions on $\R^d$ and let $\GG$ be a class of
non-negative, continuous functions on $\R^d$. The family $\GG$ consists of
local candidates or caricatures of densities in $\PP$. They need not be integrable themselves. A \emph{weighted scoring rule} is a map $\tilde{S}:\GG \times \R^d \times \cW \rightarrow \R \cup \{\infty\}$, such that, for each $w \in \cW$, $\tilde{S}(g,p,w) \defeq \int \tilde{S}(g,\y,w) p(\y) \, d\y$ exists for all $g \in \GG$, $p \in \cP$.

A weighted scoring rule $\tilde{S}$ is \emph{localizing} if, for all $w \in
\cW$ and $g,h \in \GG$, $\tilde{S}(g, \y, w) = \tilde{S}(h, \y, w)$ for
almost all  $\y \in \R^d$ if $g = h$ on $\{w > 0\}$. This implies for each $p \in \cP$, $\tilde{S}(g,p,w) = \tilde{S}(h,p,w)$ if  $g = h$ on $\{w > 0\}$.

A localizing weighted scoring rule is called \emph{locally proper} with
respect to $(\GG, \mathcal{P})$ if, for each $w \in \cW$ and
$p \in \mathcal{P}$, $\tilde{S}(g,p,w) \leq \tilde{S}(h,p,w)$ whenever
$ g,h \in \GG$ and $g = p$ on $\{w > 0\}$. A locally proper weighted
scoring rule $\tilde{S}$ is called \emph{strictly locally proper} with
respect to $(\GG, \cP)$ if it is locally proper with respect to
$(\GG, \cP)$ and for each $w \in \cW$ and $p \in \cP$,
$\tilde{S}(g,p,w) = \tilde{S}(h,p,w)$ for $g, h \in \GG$ with $g = p$ on
$\{w > 0\}$ implies that $h = p$ on $\{w > 0\}$. A localizing weighted
scoring rule is called \emph{proportionally locally proper} with respect to
$(\GG, \cP)$ if, for each $w \in \cW$ and $p \in \cP$,
  $\tilde{S}(g,p,w) \leq \tilde{S}(h,p,w)$ whenever $g, h \in
  \GG$  and $g \propto p$  on $\{w > 0\}$ with equality, if and only if, $h
  \propto p$ on $\{w > 0\}$. Here ``$\propto$'' denotes proportionality of two functions.

The above definitions are a modification of those in \citet{HolzmannKlar2017}. Here, the values of the weight functions are not restricted to $[0,1]$, allowing for kernel functions with arbitrary small bandwidth $h > 0$ as weight functions. Furthermore, the first argument of a weighted scoring rule is not necessarily a probability density function. The idea is to look for a function that is identical or proportional to the underlying probability density function $p \in \cP$ only on the support of the weight function. So, there is no need that the function is a probability density function on $\R^d$ or that it is integrable. 

\citet[Theorem 1]{HolzmannKlar2017} show how to turn proper scoring rules into locally proper scoring rules and strictly proper scoring rules into proportionally locally proper scoring rules. The latter can then be made strictly locally proper; see \citet[Theorem 2]{HolzmannKlar2017}. These statements hold also for the definitions given above and are stated in Theorems \ref{thm:Holzmann1} and \ref{thm:Holzmann2}.

\begin{thm} \label{thm:Holzmann1}
For any weight function $w \in \cW$, we define
\begin{displaymath}
	\GG_w
	\defeq \left\{g \in \GG: 0 < \int w(\y) g(\y) \, d\y < \infty \right\}
	\quad \text{and} \quad
	\tilde{\GG}_w
	\defeq \left\{ g_w := \frac{w \, g}{\int w(\z) g(\z) \, d\z} :
			g \in \GG_w \right\}.
\end{displaymath}
Let $\cP$ be such that $\tilde{\GG}_w \subset \cP$ for all $w \in \cW$ and
$S: \cP \times \R^d \rightarrow \R \cup \{\infty\}$ be a proper scoring rule with respect to $\cP$. Then
\begin{displaymath}
	\tilde{S}: \GG \times \R^d \times \cW \rightarrow
	\R \cup \{\infty\}, \quad\tilde{S}(g, \y, w) \defeq
    \begin{cases}
      w(\y) S(g_w, \y), & \text{if } g \in \GG_w,\\
      \infty, & \text{if } g \notin \GG_w,
    \end{cases}
  \end{displaymath}
  is a locally proper scoring rule with respect to
  $(\GG, \cP)$. Further, if $S$ is strictly proper with
  respect to $\cP$, then $\tilde{S}$ is proportionally locally
  proper with respect to $(\GG, \cP)$.
\end{thm}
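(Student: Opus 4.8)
The plan is to verify directly from the definitions that $\tilde{S}$ as defined satisfies the three structural properties required: that it is a well-defined weighted scoring rule (i.e.\ $\tilde{S}(g,p,w)$ exists for all $g \in \GG$, $p \in \cP$, $w \in \cW$), that it is localizing, and that it is locally proper, with the strict-propriety hypothesis on $S$ upgrading local propriety to proportional local propriety. First I would check that $\tilde{S}$ is a legitimate weighted scoring rule: for $g \in \GG_w$ one has $g_w \in \tilde{\GG}_w \subset \cP$, so $S(g_w,\cdot)$ makes sense and $S(g_w,p)$ exists for every $p \in \cP$ because $S$ is a scoring rule with respect to $\cP$; multiplying by the bounded weight $w$ and integrating against $p$ therefore yields a well-defined (possibly $+\infty$) quantity, and for $g \notin \GG_w$ the value is $\infty$ by fiat, which is allowed.

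Next I would establish the localizing property. Suppose $w \in \cW$ and $g, h \in \GG$ with $g = h$ on $\{w > 0\}$. Then $\int w(\z) g(\z)\,d\z = \int w(\z) h(\z)\,d\z$, so $g \in \GG_w$ if and only if $h \in \GG_w$; in the former case the normalized densities $g_w$ and $h_w$ agree on $\{w>0\}$ and hence, since $w(\y) S(g_w,\y)$ only involves $\y$-values where $w(\y) > 0$, we get $\tilde{S}(g,\y,w) = \tilde{S}(h,\y,w)$ for all $\y$, while in the latter case both equal $\infty$. Integrating against any $p \in \cP$ gives the asserted equality of $\tilde{S}(g,p,w)$ and $\tilde{S}(h,p,w)$.

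For local propriety, fix $w \in \cW$, $p \in \cP$, and $g, h \in \GG$ with $g = p$ on $\{w > 0\}$; the goal is $\tilde{S}(g,p,w) \le \tilde{S}(h,p,w)$. If $h \notin \GG_w$ the right side is $\infty$ and there is nothing to prove, so assume $h \in \GG_w$. I would want to reduce to an application of the propriety of $S$ at the density $p_w := w\,p / \!\int w\,p$, which lies in $\tilde{\GG}_w \subset \cP$; the identity $g = p$ on $\{w>0\}$ forces $g \in \GG_w$ (using $0 < \int w(\y) p(\y)\,d\y < \infty$, which holds because $p_w \in \cP$ is a probability density, hence $p \in \GG_w$ in the obvious sense) and $g_w = p_w$. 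Then
\[
	\tilde{S}(g,p,w) = \int w(\y) S(p_w,\y) p(\y)\,d\y
	= \Bigl( \int w(\z)p(\z)\,d\z \Bigr) S(p_w, p_w),
\]
where the last equality rewrites $w(\y)p(\y)\,d\y$ as $\bigl(\int w p\bigr)\, p_w(\y)\,d\y$; similarly $\tilde{S}(h,p,w) = \bigl(\int w p\bigr) S(h_w, p_w)$. Since $S$ is proper, $S(p_w,p_w) \le S(h_w,p_w)$, and multiplying by the positive constant $\int w p$ gives the claim. For the strict case: if $S$ is strictly proper, equality $S(h_w,p_w) = S(p_w,p_w)$ forces $h_w = p_w$, i.e.\ $w h / \!\int w h = w p / \!\int w p$, which says $h \propto p$ on $\{w > 0\}$; this is exactly proportional local propriety. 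The main obstacle I anticipate is bookkeeping around the integrability and positivity conditions defining $\GG_w$ — one must confirm at each step that the relevant functions land in $\tilde{\GG}_w$ (hence in $\cP$, so that $S$ applies) and that no $\infty - \infty$ arises — but the argument itself is a short change-of-measure computation once the normalizations are tracked carefully.
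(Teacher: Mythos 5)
Your proposal is correct and follows essentially the same route as the paper's proof: you verify the localizing property directly from the definitions, and for (proportional) local propriety you reduce to propriety of $S$ at the normalized density $p_w$ via the change of measure $w(\y)p(\y)\,d\y = \bigl(\int wp\bigr)\,p_w(\y)\,d\y$, exactly as the paper does. The only cosmetic blemish is the parenthetical ``hence $p \in \GG_w$ in the obvious sense'' (since $p$ need not lie in $\GG$, what you actually use is that $g = p$ on $\{w>0\}$ implies $\int wg = \int wp \in (0,\infty)$, hence $g \in \GG_w$), but the intended reasoning is clear and correct.
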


\begin{rem}
The assumptions on the scoring rule $S$ can be relaxed. The conclusions of Theorem~\ref{thm:Holzmann1} continue to hold if the restriction of $S$ onto $\tilde{\GG}_w \times \R^d$ is (strictly) proper for any $w \in \WW$.
\end{rem}

\begin{thm}
\label{thm:Holzmann2}
Let $\GG_w$ be as in Theorem \ref{thm:Holzmann1} and $Q:\R_{>0} \times \{0,1\} \rr \R \cup \{\infty\}$ be a scoring rule for a binary outcome, such that for all $\beta \in (0,1]$
\begin{displaymath}
	Q(\beta, \beta) \leq Q(\alpha, \beta)
	\text{ for all } \alpha > 0 \quad \text{(with equality if and only if $\alpha
    = \beta$)}, 
\end{displaymath}
where
\begin{displaymath}
  Q(\alpha, \beta) \defeq \beta Q(\alpha, 1) + (1 - \beta) Q(\alpha, 0).
\end{displaymath}
Then, $S_Q : \GG
\times \R^d \times \cW \rr \R \cup \{\infty\}$,
  \begin{displaymath}
    S_Q(h,\y,w) \defeq
    \begin{cases}
    	\displaystyle
		w(\y) Q \Bigl( m_w^{-1} \int w(\z) h(\z) \, d\z, 1 \Bigr)
		+ \bigl(m_w - w(\y)\bigr) Q \Bigl( m_w^{-1} \int w(\z) h(\z) \, d\z, 0 \Bigr) ,
		& \text{if} \ h \in \GG_w , \\
		\infty ,
		& \text{if} \ h \notin \GG_w ,
    \end{cases}
\end{displaymath}
is locally proper with respect to $(\GG, \cP)$, where $m_w \defeq \sup_{\y \in \R^d} w(\y)$. Furthermore, if $\tilde{S}$ is proportionally locally proper with respect to $(\GG, \cP)$, then
\begin{displaymath}
	\check{S}(h, \y, w) \defeq S_Q(h, \y, w) + \tilde{S}(h, \y, w)
\end{displaymath}
is strictly locally proper with respect to $(\GG, \cP)$.
\end{thm}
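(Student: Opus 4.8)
The plan is to prove Theorem~\ref{thm:Holzmann2} in two stages, mirroring the structure of the statement: first establish that $S_Q$ is locally proper, and then establish that adding a proportionally locally proper $\tilde{S}$ to it yields a strictly locally proper rule.

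For the first stage, fix a weight function $w \in \cW$ and a density $p \in \cP$. I would first check that $S_Q$ is localizing: its definition depends on $h$ only through $\int w(\z) h(\z)\,d\z$ and through the indicator $\one\{h \in \GG_w\}$, both of which are unchanged if $h$ is modified off $\{w > 0\}$, so $S_Q(g,\y,w) = S_Q(h,\y,w)$ for almost all $\y$ whenever $g = h$ on $\{w > 0\}$. Next, for local propriety I would compute the integrated score $S_Q(h,p,w) = \int S_Q(h,\y,w) p(\y)\,d\y$ for $h \in \GG_w$. Writing $\alpha := m_w^{-1} \int w(\z) h(\z)\,d\z > 0$, the integrand is $w(\y) Q(\alpha,1) + (m_w - w(\y)) Q(\alpha,0)$, so
\[
	S_Q(h,p,w)
	\ = \ Q(\alpha,1) \int w(\y) p(\y)\,d\y
		+ Q(\alpha,0) \Bigl( m_w - \int w(\y) p(\y)\,d\y \Bigr) .
\]
Setting $\beta := m_w^{-1} \int w(\y) p(\y)\,d\y \in (0,1]$ (note $\beta > 0$ can be assumed, as otherwise $w = 0$ $p$-a.e.\ and the claim is vacuous, and $\beta \le 1$ since $w \le m_w$), this equals $m_w \bigl( \beta Q(\alpha,1) + (1-\beta) Q(\alpha,0) \bigr) = m_w\, Q(\alpha,\beta)$. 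When $h = p$ on $\{w > 0\}$ we get $\alpha = \beta$, and the hypothesis on $Q$ gives $Q(\beta,\beta) \le Q(\alpha,\beta)$ for every competing $\alpha > 0$; hence $S_Q(p,p,w) \le S_Q(h,p,w)$ for all $h \in \GG_w$, and trivially for $h \notin \GG_w$ since then the score is $+\infty$. This is exactly local propriety.

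For the second stage, suppose $\tilde S$ is proportionally locally proper with respect to $(\GG,\cP)$, and set $\check S := S_Q + \tilde S$. Localizing is inherited since both summands are localizing. For local propriety, fix $w, p$ and any $h \in \GG$ with $h = p$ on $\{w > 0\}$; I must show $\check S(g,p,w) \le \check S(h,p,w)$ for all $g \in \GG$ (with the appropriate equality condition). If $g \notin \GG_w$ the right side is finite and the left is $+\infty$ by the $S_Q$ term, so assume $g \in \GG_w$. Since $g = p$ on $\{w > 0\}$ trivially implies $g \propto p$ on $\{w > 0\}$, proportional local propriety of $\tilde S$ gives $\tilde S(h,p,w) \le \tilde S(g,p,w)$. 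Wait — I need the inequality in the direction $\check S(h,\cdot) \le \check S(g,\cdot)$, so let me recompute: the reference configuration here is $h$ with $h = p$ on $\{w>0\}$, and we compare against arbitrary $g$. From the first stage, $S_Q(h,p,w) = m_w Q(\beta,\beta) \le m_w Q(\alpha_g,\beta) = S_Q(g,p,w)$ where $\alpha_g := m_w^{-1}\int w g$. From proportional local propriety, since $h \propto p$ on $\{w>0\}$ attains the minimum of $\tilde S(\cdot,p,w)$, we have $\tilde S(h,p,w) \le \tilde S(g,p,w)$. Adding, $\check S(h,p,w) \le \check S(g,p,w)$, so $\check S$ is locally proper. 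For strict local propriety I must show that equality $\check S(h,p,w) = \check S(g,p,w)$, for $g,h \in \GG$ with $h = p$ on $\{w>0\}$, forces $g = p$ on $\{w>0\}$. Equality of the sums forces equality in both of the above inequalities simultaneously (each is an inequality in the right direction). Equality in the $S_Q$ part gives $Q(\alpha_g,\beta) = Q(\beta,\beta)$, hence $\alpha_g = \beta$ by the strict part of the hypothesis on $Q$, i.e.\ $\int w g = \int w p$. Equality in the $\tilde S$ part, by proportional local propriety, gives $g \propto p$ on $\{w > 0\}$, say $g = c\,p$ on $\{w>0\}$ for some $c > 0$. Then $\int w g = c \int w p$, and combined with $\int w g = \int w p$ and $\int w p > 0$ we get $c = 1$, so $g = p$ on $\{w > 0\}$, as required.

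The main obstacle, modest as it is, is bookkeeping the directions of the inequalities correctly and handling the degenerate cases: the case $\int w p = 0$ (where all statements are vacuous because the weighted score ignores $p$ entirely), the case $g \notin \GG_w$ (handled by the $+\infty$ convention), and the extraction of $c = 1$ from proportionality, which is where the condition $0 < \int w g < \infty$ built into $\GG_w$ and the positivity $\int w p > 0$ are essential. One should also verify at the outset that $\check S(h,p,w)$ is well-defined (the integral exists) for $h \in \GG_w$, which follows because $S_Q(h,\cdot,w)$ is a bounded measurable function times constants and $\tilde S(h,p,w)$ exists by hypothesis on $\tilde S$; no serious analytic difficulty arises.
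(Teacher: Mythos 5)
Your proof is correct and follows essentially the same approach as the paper's: the same reduction $S_Q(h,p,w) = m_w\,Q(\alpha_h,\beta)$ with $\alpha_h = m_w^{-1}\int w h$ and $\beta = m_w^{-1}\int w p$, the same appeal to the defining inequality of $Q$ for local propriety, and the same splitting of equality in $\check S$ into equality in both summands, which yields proportionality on $\{w>0\}$ from $\tilde S$ and $\int wg = \int wp$ from $Q$, and hence identity on $\{w>0\}$. The only stylistic remark is that you initially set up the inequality in the wrong direction (and also swapped the roles of matcher and competitor) before catching and correcting both mid-proof; in a clean write-up you would fix the convention up front, as the paper does: the function agreeing with $p$ on $\{w>0\}$ is the one whose score must be smallest. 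Your observation that $\int w p > 0$ is needed so that the matcher lies in $\GG_w$ is a useful explicit remark that the paper leaves implicit.
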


\begin{rem}
There are other possible choices for $m_w$. It is only needed, that for
each $w \in \cW$ and $p \in \cP$, we have $\int w(\z) p(\z) \, d\z \leq m_w$.
This ensures that for all $h \in \GG$, such that there exists
$p \in \cP$ with $h = p$ on $\{w > 0\}$, we have $\int w(\z) h(\z) \, d\z < \infty$.

The scoring rule $Q$ in Theorem \ref{thm:Holzmann2} is similar to a strictly proper scoring rule for binary events. The only difference is, that $\alpha$ is not restricted to $(0,1)$. Therefore $S_Q$ can be defined for all $h \in \GG_w$.
\end{rem}

\paragraph{Application to density estimation.}
Suppose $\X_1, \X_2, \ldots, \X_n$ is a random sample drawn from a distribution with density function $f \in \cP$. Let $\tilde{S}$ be a strictly/proportionally locally proper scoring rule with respect to $(\GG, \mathcal{P})$. For some weight function $w \in \cW$,
we are interested in a function $g \in \GG$ with minimal score, so
\begin{displaymath}
	g \ \in \ \argmin_{g \in \GG} \tilde{S}(g,f,w)
	\ = \ \argmin_{g \in \GG} \Ex \tilde{S}(g, \X, w),
\end{displaymath}
where $\X$ is a random vector with distribution according to the density function $f$. Since $f$ is unknown, we replace the distribution of $\X$ with the empirical distribution of $\X_1, \X_2, \ldots, \X_n$. Thus, an estimator for a minimizer of $\tilde{S}(\cdot,f,w)$ is
\begin{displaymath}
	\hat{g} \in \argmin_{g \in \GG}
		\frac{1}{n} \sum_{i=1}^n \tilde{S}(g, \X_i - \x, w).
\end{displaymath}

\paragraph{Local log-quadratic approximation.}
Suppose we are interested in estimating the $\log$-density $\ell = \log f$ and its first and second order derivatives at a particular point $\x \in \{f > 0\}$. Then we use the classes
\begin{equation}
\label{eq:W.and.G.for.x}
	\cW \ \defeq \ \{ K_h(\cdot - \x) : h > 0\}
	\quad\text{and}\quad
	\GG \ = \ \bigl\{ \exp(g_{c,\b,\A}(\cdot - \x) : (c,\b,\A) \in \Hds \bigr\} .
\end{equation}

\paragraph{Local log-likelihood estimation.}
The negative log-likelihood score proposed by \citet{Good1952} is given by
\begin{displaymath}
	S: \mathcal{P} \times \R^d \rightarrow \R \cup \{\infty\}, \quad
	S(p,\y) = - \log p(\y) .
\end{displaymath}
It is a strictly proper scoring rule with respect to any class of
absolutely continuous probability measures $\cP$ such that $\int |\log p(\z)| p(\z) \, d\z < \infty$ for all $p \in \cP$.

If $\GG$ and $\cW$ are such that the conditions in Theorem \ref{thm:Holzmann1} are fulfilled, then the weighted scoring rule
\begin{displaymath}
  \tilde{S} : \GG \times \R^d \times \cW \rightarrow \R \cup \{\infty\},
  \quad \tilde{S}(g, \y, w) \defeq
  \begin{cases}
    - w(\y) \log g_w(\y), & \text{if } g \in \GG_w,\\
    \infty, & \text{if } g \notin \GG_w,
  \end{cases}
\end{displaymath}
is proportionally locally proper with respect to $(\GG, \cP)$.
We can write more explicitly 
\begin{displaymath}
	\tilde{S}(g,\y,w)
	\ = \ - w(\y) \log g(\y)
		+ w(\y) \log\left(\int w(\z) g(\z) \, d\z\right)
		- w(\y) \log w(\y),
\end{displaymath}
which is infinite whenever $g \notin \GG_w$. It is suprising that even with evaluation of the integral $\int w(\z) g(\z) \, d\z$, the score is only proportionally locally proper. However, applying Theorem~\ref{thm:Holzmann2} with the scoring rule 
\begin{displaymath}
	Q: \R_{>0}\times \{0,1\} \rr \R, \quad
	Q(\alpha,z) = -z (\log(\alpha) + 1) + \alpha 
\end{displaymath}
for binary events leads to a strictly locally proper scoring rule. Namely, for $g \in \GG_w$,
\begin{align*}
	S_Q(g,\y,w)
	= &- w(\y) \log \Bigl( m_w^{-1} \int w(\z) g(\z) \, d\z \bigr) - w(\y) \\
		&+ \ w(\y) m_w^{-1} \int w(\z) g(\z) \, d\z
			+ (1 - w(\y) m_w^{-1}) \int w(\z) g(\z) \, d\z \\
	= &- w(\y) \log \left( \int w(\z) g(\z) \, d\z \right)
		+ \int w(\z) g(\z) \, d\z
		+ w(\y) \bigl( \log(m_w) - 1 \bigr),
\end{align*}
and for $g \notin \GG_w$, we have $S_Q(g,\y,w) = \infty$. Hence,
\begin{displaymath}
	\check{S}(g,\y,w) =
		-w(\y) \log g(\y) + \int w(\z) g(\z) \, d\z
		- w(\y) \bigl( 1 + \log(w(\y)/m_w) \bigr)
\end{displaymath}
is a strictly locally proper scoring rule with respect to $(\GG, \cP)$. This score, without the additive term $- w(\y) \log(w(\y)/m_w)$, is known as penalized weighted likelihood rule and studied in detail by \citet{Pelenis2014}. We may neglect all terms not depending on $g \in \GG$ and work with the equivalent score
\begin{displaymath}
    \check{S}(g,\y,w) = -w(\y) \log g(\y) + \int w(\z) g(\z) \, d\z.
\end{displaymath}
For the special families $\cW$ and $\GG$ in \eqref{eq:W.and.G.for.x}, this leads to the local log-likelihood estimator introduced in Section~\ref{sec:LL}.

\paragraph{Local Hyv\"arinen score estimation.}
Now consider the score of \cite{Hyvaerinen2005},
\begin{equation} \label{eq:HS}
  S: \cP \times \R^d \rr \R, \quad
  S(p, \y) \defeq 2^{-1} \lVert D \log p(\y) \rVert^2
  	+ \triangle \log p(\y) ,
\end{equation}
a strictly proper scoring rule with respect to $\mathcal{P}$.  If $\GG$ and $\cW$ are
such that the conditions in Theorem~\ref{thm:Holzmann1} are fulfilled, then
the Hyv\"arinen score leads to a proportionally locally proper scoring rule
$\tilde{S}:\GG \times \R^d \times \cW \rightarrow \R \cup \{\infty\}$,
\begin{align*}
	\tilde{S}(g,\y,w) = w(\y) S(g_w, \y)
	&= 2^{-1} w(\y) \lVert D \log g_w(\y) \rVert^2
		+ w(\y) \triangle \log g_w(\y)\\
	&= 2^{-1} w(\y) \lVert D \log w(\y) + D \log g(\y) \rVert^2
		+ w(\y) \triangle \log g(\y)
		+ w(\y) \triangle \log w(\y)\\
	& = w(\y) S(g,\y) + Dw(\y)^\top D \log g(\y)
		+ w(\y) S(w,\y) .
\end{align*}
Neglecting the term $w(\y) S(w,\y)$ which does not depend on $g \in \GG$, we end up with the weighted scoring rule
\[
	\tilde{S}(g,\y,w)
	= w(\y) S(g,\y) + Dw(\y)^\top D \log g(\y)
\]
which is proportionally locally proper by virtue of Theorem~\ref{thm:Holzmann1}. We refrain from applying Theorem \ref{thm:Holzmann2} to avoid adding an integral term to the score.

For the special families $\cW$ and $\GG$ in \eqref{eq:W.and.G.for.x}, this leads to the local Hyv\"arinen score estimator intruduced in Section~\ref{sec:HS}.

\bibliographystyle{abbrvnat}
\bibliography{../../../Literature/Literature.bib}

\appendix

\section{Appendix: Proofs and Auxiliary Results}
\label{app}

By $\lVert \cdot \rVert_\infty$ we denote the supremum norm of a function
on $\R^d$.

\subsection{Local Moment Matching}
\label{app:MM}

\begin{proof}[\textnormal{\textbf{Proof of Lemma \ref{lem:LM-Taylor}}}]
By Taylor's formula we know that for $\x,\z \in \R^d$ and $h\geq 0$ there exists a $\xi_{\x}(\z) \in [0,h]$ such that
\begin{align*}
    f(\x+h\z)
    &= \sum_{\lvert\bgamma\rvert < L}
    	\frac{h^{\lvert\bgamma\rvert}}{\bgamma!}
			f^{(\bgamma)}(\x) \z^\bgamma
		+ \sum_{\lvert\bgamma\rvert = L} \frac{h^L}{\bgamma!}
			f^{(\bgamma)}(\x + \xi_{\x}(\z) \z) \z^\bgamma\\
    &= \sum_{\lvert\bgamma\rvert \leq L}
    	\frac{h^{\lvert\bgamma\rvert}}{\bgamma!}
			f^{(\bgamma)}(\x) \z^\bgamma
		+ h^L \sum_{\lvert\bgamma\rvert = L}
			\frac{1}{\bgamma!} \bigl(f^{(\bgamma)}(\x+ \xi_{\x}(\z)\z)
				- f^{(\bgamma)}(\x) \bigr)  \z^\bgamma.
\end{align*}
Thus,
\begin{displaymath}
  \int F(\z) f(\x + h\z) \, d\z
  = \sum_{\lvert\bgamma\rvert \leq L}
    \frac{h^{\lvert\bgamma\rvert}}{\bgamma !} f^{(\bgamma)}(\x)
      \int F(\z) \z^{\bgamma} \, d\z
     \\
    + h^L \sum_{\lvert\bgamma\rvert = L} \frac{1}{\bgamma!} R_\bgamma(\x,h) ,
\end{displaymath}
with
\[
  R_\bgamma(\x,h)
  \defeq \int \bigl(f^{(\bgamma)}(\x + \xi_{\x}(\z) \z) - f^{(\bgamma)}(\x) \bigr)
    F(\z) \z^\bgamma \, d\z.
\]
On the one hand, $\lvert R(\x,h)\rvert \le 2 \lVert f^{(\bgamma)}\lVert_\infty C_F$ with $C_F \defeq \int \lvert F(\z)\rvert \lVert\z\rVert^L \, d\z$. On the other hand, for any fixed $B, B' > 0$, uniform continuity of $f^{(\bgamma)}$ on bounded sets implies that
\begin{align*}
  \lvert R(\x,h)\rvert
  &\le 2 \lVert f^{(\bgamma)}\lVert_\infty
        \int\nolimits_{\{\z : \lVert\z\rVert > B'\}}
        	\lvert F(\z)\rvert \lVert\z\rVert^L \,  d\z
      + C_F \sup_{\lVert\x\rVert \le B, \lVert\tilde{\z}\rVert \le hB'}
        \bigl\lvert f^{(\bgamma)}(\x + \tilde{\z}) - f^{(\bgamma)}(\x) \bigr\rvert \\
  &\to 2 \lVert f^{(\bgamma)}\lVert_\infty
      \int\nolimits_{\{\z : \lVert\z\rVert > B'\}}
      	\lvert F(\z)\rvert \lVert\z\rVert^L \,  d\z
      \quad \text{as } h \to 0 ,
\end{align*}
and the latter limit is arbitrarily small for sufficiently large $B'$. The second part of the lemma corresponds to $F(\z) = K(\z) \z^{\alpha}$ with $\lvert F(\z)\rvert \le K(\z) \lVert\z\rVert^L$.
\end{proof}

\begin{proof}[\bf Proof of Lemma~\ref{lem:Inverse.J}]
If we write $\bs{J}(c,\b,\A) = (\tilde{c},\b,\tilde{\A})$, then
\[
	\tilde{c}
	\ = \ c + g
	\quad\text{and}\quad
	\tilde{\A}
	\ = \ c \I_d + \A\odot\M + \mu_{22} \, g \I_d
	\ = \ \tilde{c} \I_d + \A \odot \M + (\mu_{22} - 1) g \I_d
\]
with $g := 2^{-1} \tr(\A)$. In particular, setting $\tilde{\A}_o := \tilde{\A} - \tilde{c}\I_d$,
\[
	\tr(\tilde{\A}_o)
	\ = \ 2^{-1}(\mu_4 - \mu_{22}) \tr(\A) + (\mu_{22} - 1) d g
	\ = \ \eta g .
\]
Indeed, $\eta > 0$, because
\[
	0 \ < \ \int K(\z) (\|\z\|^2 - d)^2 \, d\z
	\ = \ d \mu_4 + d(d-1) \mu_{22} - d^2
	\ = \ d \eta .
\]
Consequently, $g = \eta^{-1} \tr(\tilde{\A}_o)$, and this shows that
\[
	c
	\ = \ \tilde{c} - \eta^{-1} \tr(\tilde{\A}_o) 
\]
and
\[
	\A\odot\M
	\ = \ \tilde{\A}_o
		- (\mu_{22} - 1) \eta^{-1} \tr(\tilde{\A}_o)\I_d ,
\]
whence
\[
	\A
	\ = \ \bigl( \tilde{\A}_o - (\mu_{22} - 1) \eta^{-1} \tr(\tilde{\A}_o) \I_d \bigr)
		\oslash \M .
\]\\[-5ex]
\end{proof}

\begin{proof}[\bf Derivation of the polynomial \eqref{eq:MM3_1}]
If $p_j(\z) = a z_j - b \|\z\|^2 z_j$, then it follows from \eqref{eq:moments.symmetry.K} that for arbitrary $\bgamma \in \N_0^d$ with $|\bgamma| \le 3$,
\[
	\int K(\z) p_j(\z) \z^\bgamma \, d\z \ = \ 0
	\quad\text{unless} \ \bgamma = \e_j \ \text{or} \ \bgamma = \e_j + 2 \e_k
	\ \text{for some} \ 1 \le k \le d.
\]
Hence we have to find $a, b \in \R$ such that
\[
	\int K(\z) p_j(\z) z_j \, d\z \ = \ 1
	\quad\text{and}\quad
	\int K(\z) p_j(\z) z_j z_k^2 \, d\z \ = \ 0
	\quad\text{for} \ 1 \le k \le d .
\]
By symmetry of $K$, it suffices to consider $k = j$ and only one value $k \ne j$. Then we obtain the linear equation system
\begin{align*}
	a - b (\mu_4 + (d-1) \mu_{22}) \
	&\stackrel{!}{=} \ 1 , \\
	a \mu_4 - b (\mu_6 + (d-1) \mu_{42}) \
	&\stackrel{!}{=} \ 0 , \\
	a \mu_{22} - b (2 \mu_{42} + (d-2) \mu_{222}) \
	&\stackrel{!}{=} \ 0 .
\end{align*}
If $\Z \sim K$, then $\Z = R \bs{U}$, where $R = \|\Z\|$ and $\bs{U} \in
\R^d$ are stochastically independent, and $\bs{U}$ is uniformly distributed
on the unit sphere of $\R^d$. Hence $\mu_\bgamma = \Ex(R^{|\bgamma|})
\Ex(\bs{U}^{\bgamma})$. Moreover, in the special case of $K = \varphi$,
well-known identities for the standard Gaussian distribution yield that
$\mu_{22} = \mu_{222} = 1$, $\mu_4 = \mu_{42} = 3$, $\mu_6 = 15$, while
$\Ex(R^4) = d(d+2)$ and $\Ex(R^6) = d(d+2)(d+4)$, and hence
$\Ex(\U^{\bgamma})$ can be computed. Consequently, the left-hand sides of our equation system equal
\begin{align*}
	a - b (\mu_4 + (d-1) \mu_{22}) \
	&= \ a - b \Ex(R^4)/d , \\
	a \mu_4 - b (\mu_6 + (d-1) \mu_{42}) \
	&= \ \frac{3}{d(d+2)} \bigl( a \Ex(R^4) - b \Ex(R^6) \bigr) , \\
	a \mu_{22} - b (2 \mu_{42} + (d-2) \mu_{222}) \
	&= \ \frac{1}{d(d+2)} \bigl( a \Ex(R^4) - b \Ex(R^6) \bigr) .
\end{align*}
Thus, the equation system simplifies to
\begin{align*}
	a - b \Ex(R^4)/d \
	&\stackrel{!}{=} \ 1 , \\
	a \Ex(R^4) - b \Ex(R^6) \	
	&\stackrel{!}{=} \ 0 ,
\end{align*}
and the unique solution is given by
\[
	b \ = \ \Ex(R^4) / \bigl( \Ex(R^6) - \Ex(R^4)^2/d)
	\quad\text{and}\quad
	a \ = \ 1 + b \Ex(R^4)/d .
\]
The denominator of $b$ is strictly positive, because $\Ex(R^2) = d$, so
\[
	\Ex(R^6) - \Ex(R^4)^2/d
	\ = \ \Ex \bigl( R^2 (R^2 - \Ex(R^4)/d)^2 \bigr) \ > \ 0 .
\]\\[-5ex]
\end{proof}

\begin{proof}[\bf Proof of Theorem~\ref{thm:MM_f}]
By construction of the moment matching estimators,
\[
	\bigl( \hat{f}_{n,h}(\x), h \hat{Df}_{n,h}(\x), h^2 \hat{D^2f}_{n,h}(\x) \bigr)
	\ = \ \frac{1}{nh^d} \sum_{i=1}^n \Gvec(h^{-1}(\X_i - \x)) .
\]
Note also that boundedness of $K$ and finiteness of $\int K(\z) \lVert\z\rVert^r \, d\z$ for arbitrary $r > 0$ imply that $\lVert\Gvec\rVert$ and $\lVert\Gvec\rVert^2$ are integrable over $\R^d$. Thus
\[
	\Yvecnhx \ = \ \frac{1}{nh^d} \sum_{i=1}^n
		\bigl[ \Gvec(h^{-1}(\X_i - \x)) - \Ex \Gvec(h^{-1}(\X_1 - \x)) \bigr]
\]
without an additional term $o_p \bigl( (nh^d)^{-1/2} \bigr)$. Consequently, it suffices to analyze
\[
	\Bvechx
	\ \defeq \ h^{-d} \Ex \Gvec(h^{-1}(\X_1 - \x))
		- \bigl( f(\x), h Df(\x), h^2 D^2f(\x) \bigr) .
\]
By construction of the polynomials $p_0$, $p_j$ and $p_{jk}$ and by Lemma~\ref{lem:LM-Taylor},
\begin{align*}
	\Bvechx \
	&= \ \int \Gvec(\z) f(\x + h\z) \, d\z \\
	&= \ \sum_{|\bgamma| = 3, 4} h_{}^{|\bgamma|}
			\frac{f^{(\bgamma)}(\x)}{\bgamma!}
			\int \Gvec(\z) \z^\bgamma \, d\z + o(h^4) \\
	&= \ \sum_{|\bgamma| = 3, 4} h_{}^{|\bgamma|}
			\frac{f^{(\bgamma)}(\x)}{\bgamma!}
			\int \Gvec(\z) \z^\bgamma \, d\z + o(h^4) \\
	&= \ \sum_{|\bgamma| = 3, 4} h_{}^{|\bgamma|}
			\frac{f^{(\bgamma)}(\x)}{\bgamma!}
			\int K(\z) \Bigl( p_0(\z), \bigl( p_j(\z) \bigr)_{j=1}^d,
				\bigl( p_{jk}(\z) \bigr)_{j,k=1}^d \Bigr) \z_{}^\bgamma\, d\z + o(h^4) ,
\end{align*}
Since $p_0(\z)$ and $p_{jk}(\z)$ contain only monomials of order $0$ or $2$,
\[
	\int K(\z) p_0(\z) \z^\bgamma \, d\z
	\ = \ \int K(\z) p_{jk}(\z) \z^\bgamma \, d\z
	\ = \ 0
	\quad\text{whenever} \ |\bgamma| = 3 .
\]
And since $p_j(\z)$ contains only monomials of order $1$ or $3$,
\[
	\int K(\z) p_j(\z) \z^\bgamma \, d\z
	\ = \ 0
	\quad\text{whenever} \ |\bgamma| = 4 .
\]
This shows that in case of $p_j(\z) = z_j$,
\[
	\Bvec_h(\x)
	\ = \ \bigl( h^4 \beta(\x) , h^3 \bs{\beta}(\x) , h^4 \B(\x) \bigr) + o(h^4)
	\ = \ \bigl( h^4 \beta(\x_o) , h^3 \bs{\beta}(\x_o) , h^4 \B(\x_o) \bigr) + o(h^4) ,
\]
where $\beta(\cdot)$, $\bs{\beta}(\cdot)$ and $\B(\cdot)$ are defined as in the theorem, and the last step in the preceding display follows from continuity of the latter three functions. In case of the refined polynomials $p_j(\z)$, their construction implies that
\[
	\Bvec_h(\x)
	\ = \ \bigl( h^4 \beta(\x) , \bs{0} , h^4 \B(\x) \bigr) + o(h^4)
	\ = \ \bigl( h^4 \beta(\x_o) , \bs{0} , h^4 \B(\x_o) \bigr) + o(h^4) .
\]

Let us add two observations about uniform consistency. Note that for arbitrary $\x \in \R^d$,
\begin{align*}
	\Ex \bigl( \bigl\| \Yvecnhx \bigr\|^2 \bigr) \
	&\le \ n^{-1} h^{-2d} \Ex \bigl( \bigl\| \Gvec(h^{-1}(\X_1 - \x)) \bigr\|^2 \bigr) \\
	&= \ (nh^d)^{-1} \int \|\Gvec(\z)\|^2 f(\x + h\z) \, d\z \\
	&\le \ (nh^d)^{-1} \|f\|_\infty \int \|\Gvec(\z)\|^2 \, d\z .
\end{align*}
Moreover, Lemma~\ref{lem:LM-Taylor} implies that
\[
	\Bvechx \ = \ \begin{cases}
	\bigl( O(h^4), O(h^3), O(h^4) \bigr)
		& \text{in general} , \\
	O(h^4)
		& \text{for spherically symmetric $K$ and $p_j$ as in \eqref{eq:MM3_1}} ,
	\end{cases}
\]
uniformly in $\x \in \R^d$.
\end{proof}

\subsection{Kernel Density Estimation}
\label{app:KD}

\begin{proof}[\bf Proof of Theorem~\ref{thm:KD_f}]
By construction of the kernel density estimator,
\[
	\bigl( \hat{f}_{n,h}(\x), h \hat{Df}_{n,h}(\x), h^2 \hat{D^2f}_{n,h}(\x) \bigr)
	\ = \ \frac{1}{nh^d} \sum_{i=1}^n \Gvec(h^{-1}(\X_i - \x)) .
\]
Since $K^{(\balpha)} \in \CC_b^0(\R^d) \cap \LL^1(\R^d)$, both $\lVert\Gvec\rVert$ and $\lVert\Gvec\rVert^2$ are integrable over $\R^d$. Consequently, it suffices to analyze the bias
\[
	\Bvec_h(\x)
	\ \defeq \ h^{-d} \Ex \Gvec(h^{-1}(\X_1 - \x))
		- \bigl( f(\x), h Df(\x), h^2 D^2f(\x) \bigr) .
\]
For any $\balpha \in \N_0^d$ with $|\balpha| \le 2$,
\begin{align*}
	h^{|\balpha|} \Ex \hat{f_{n,h}^{(\balpha)}}(\x) \
	&= \ (-1)^{|\balpha|} h^{-d} \int K^{(\balpha)}(h^{-1}(\y - \x)) f(\y) \, d\y \\
	&= \ (-1)^{|\balpha|} \int K^{(\balpha)}(\z) f(\x + h\z) \, d\z \\
	&= \ h^{|\balpha|} \int K(\z) f^{(\balpha)}(\x + h\z) \, d\z ,
\end{align*}
where the last step follows from $|\balpha|$-fold partial integration combined wih Fubini's theorem. By means of Taylor's formula we obtain the expansion
\begin{align*}
	h^{|\balpha|} \int K(\z) f^{(\balpha)}(\x + h\z) \, d\z \
	&= \ h^{|\balpha|}
		\sum_{|\bgamma| \le 2} h^{|\bgamma|}
			\frac{f^{(\balpha + \bgamma)}(\x)}{\bgamma!}
				\int K(\z) \z^\bgamma \, d\z + o(h^{|\balpha| + 2}) \\
	&= \ h^{|\balpha|} f^{(\balpha)}(\x)
		+ 2^{-1} h^{|\balpha| + 2} \sum_{m=1}^d f^{(\balpha + 2\e_m)}(\x)
		+ o(h^{|\balpha| + 2}) ,
\end{align*}
because $\int K(\z) \z^\bgamma \, d\z = \one\{\bgamma = \bs{0}\} + \sum_{m=1}^d \one\{\bgamma = 2\e_m\}$. From this representation and continuity of the partial derivatives of $f$ involved here, one can easily derive the representation of $\Bvec_h^{}(\x)$.
\end{proof}

\subsection{Local Log-Likelihood Estimation}
\label{app:LL}

\begin{proof}[\bf Mininizing $\hat{S}_{n,h}^{\texttt{L}}$ in the Gaussian case]
For fixed $\x \in \R^d$, we want to minimize
\begin{align*}
	\hat{S}_{n,h}^{\mathtt{L}}(&c, \b, \A, \x)\\
 = \ &- \frac{1}{n}
	\sum_{i=1}^n K_h(\X_i - \x) \bigl( c + \b^\top (\X_i - \x) +
		2^{-1} (\X_i - \x)^\top \A (\X_i - \x) \bigr) \\
  &+ \ \int K(\z) \exp \bigl( c + h \b^\top \z
  	+ 2^{-1} h^2 \z^\top \A \z \bigr) \, d\z \\
  = \ &- c \snhx - h \b^\top \bsnhx - 2^{-1} \tr \bigl( \A \bSnhx \bigr)
  		+ \int K(\z)
			\exp \bigl( c + h \b^\top \z + 2^{-1} h^2 \z^\top \A \z \bigr) \, d\z
\end{align*}
in $(c,\b,\A)$. In case of $K = \varphi$, the latter integral equals $\infty$ if $h^2 \lambda_{\mathtt{max}}(\A) \ge 1$. Otherwise, $\A_h^{} \defeq \I_d - h^2\A$ is positive definite, and
\begin{align*}
	\int & K(\z) \exp \bigl( c + h \b^\top \z + 2^{-1} h^2 \z^\top \A \z \bigr) \, d\z \\
	&= \ (2\pi)^{-d/2} \int
		\exp \bigl( - 2^{-1} (\z - h\A_h^{-1} \b)^\top \A_h^{} (\z - h\A_h^{-1}\b) \bigr)
				\, d\z \,
		\cdot \exp \bigl( c + 2^{-1} h^2 \b^\top\A_h^{-1}\b \bigr) \\
	&= \ (2\pi)^{-d/2}
		\int \exp \bigl( - 2^{-1} \y^\top \A_h^{} \y \bigr) \, d\y \,
		\cdot \exp \bigl( c + 2^{-1} h^2 \b^\top\A_h^{-1}\b \bigr) \\
	&= \ \det(\A_h^{})^{-1/2} \exp \bigl( c + 2^{-1} h^2 \b^\top\A_h^{-1}\b \bigr) .
\end{align*}
Consequently,
\[
	\hat{S}_{n,h}^{\mathtt{L}}(c, \b, \A, \x)
	= - c \snhx - h \b^\top \bsnhx - 2^{-1} h^2 \tr \bigl( \A \bSnhx \bigr)
		+ \ \det(\A_h^{})^{-1/2} \exp \bigl( c + 2^{-1} h^2 \b^\top \A_h^{-1} \b \bigr) .
\]
This is strictly convex in $c$, and since
\begin{displaymath}
	\frac{\partial}{\partial c} \hat{S}_{n,h}^{\mathtt{L}}(c, \b, \A, \x)
	= - \snhx
		+ \det(\A_h^{})^{-1/2} \exp \bigl( c + 2^{-1} h^2 \b^\top \A_h^{-1} \b \bigr) ,
\end{displaymath}
the optimal $c$ is given by
\begin{displaymath}
	\hat{c} = \hat{c}(\b,\A)
	= \log \snhx - 2^{-1} h^2 \b^\top \A_h^{-1} \b
		+ 2^{-1} \log \det(\A_h^{}) .
\end{displaymath}
Plugging in $\hat{c}$ leads to the functional 
\begin{align*}
	\hat{S}_{n,h}^{\mathtt{L}}(\hat{c},\b, \A, \x) \
	= \ &\snhx ( 1 - \log \snhx)
		+ 2^{-1} \snhx h^2 \b^\top \A_h^{-1}\b
    	- 2^{-1} \snhx \log \det(\A_h) \\
		&- \ h\b^\top \bsnhx - 2^{-1} h^2 \tr\bigl( \bSnhx \A \bigr) ,
\end{align*}
which is strictly convex and quadratic in $\b$, where
\begin{displaymath}
	\frac{\partial}{\partial \b}
		\hat{S}_{n,h}^{\mathtt{L}}(\hat{c},\hat{\b}, \A, \x)
	= h^2 \snhx \A_h^{-1} \b - h \bsnhx .
\end{displaymath}
hence, the optimal $\b$ is given by
\begin{displaymath}
	\hat{\b} = \hat{\b}(\A) \ = \ h^{-1} \A_h^{} \munhx ,
\end{displaymath}
and plugging in $\hat{\b}$ leads to the functional
\begin{align*}
	\hat{S}_{n,h}^{\mathtt{L}}(\hat{c}, \hat{\b}, \A, \x) \
	= \ &\snhx (1 - \log \snhx) - 2^{-1} \snhx \munhx^\top \A_h^{} \munhx
		- 2^{-1} \log \det(\A_h^{}) \\
		&- \ 2^{-1} \snhx \tr(\bSnhx \A \bigr) \\ 
	= \ & c_{n,h}(\x)
		+ \ 2^{-1} \snhx \bigl( \tr \bigl( \A_h \Sigmanhx \bigr)
			- \log \det(\A_h^{}) \bigr)
\end{align*}
with $c_{n,h}(\x) \defeq \snhx (1 - \log \snhx) - 2^{-1} \tr(\bSnhx)$.
To minimize this with respect to $\A_h^{}$, let $\lambda_1 \leq \lambda_2 \leq \cdots \leq \lambda_d$ be the eigenvalues of the matrix $\A_h \Sigmanhx$. Then
\begin{equation}
\label{eq:Score-LL}
	\hat{S}_{n,h}^{\mathtt{L}}(\hat{c}, \hat{\b}, \A, \x)
	\ = \ c_{n,h}(\x) + 2^{-1} \snhx \sum_{i=1}^d (\lambda_i - \log(\lambda_i) \bigr)
		+ 2^{-1} \log\det(\Sigmanhx) .
\end{equation}
Since the function $x - \log(x)$ is convex with unique minimum at $x = 1$,
\eqref{eq:Score-LL} is minimal if, and only if, $\A_h^{} \Sigmanhx = \I_d$, that means,
\begin{displaymath}
	\hat{\A} \ = \ h^{-2} \bigl( \I_d - \Sigmanhx^{-1} \bigr) ,
\end{displaymath}
provided that $\Sigmanhx$ is nonsingular. (Note that indeed, $h^2 \lambda_{\mathtt{max}}(\hat{\A}) < 1$.) To summarize this, in case of the Gaussian kernel $K = \varphi$ and $\Sigmanhx$ being nonsingular, the local log-likelihood estimators are given by 
\begin{align*}
	\hat{\log f}_{n,h}^{\mathtt{L}}(\x)
	&= \log \snhx - 2^{-1} \munhx^\top \Sigmanhx^{-1} \munhx
		- 2^{-1} \log \det(\Sigmanhx) , \\
	\hat{D \log f}_{n,h}^{\mathtt{L}}(\x)
	&= h^{-1} \Sigmanhx^{-1} \munhx,\\
	\hat{D^2 \log f}_{n,h}^{\mathtt{L}}(\x)
	&= h^{-2} \bigl( \I_d - \Sigmanhx^{-1} \bigr).
\end{align*}\\[-5ex]
\end{proof}

\begin{proof}[\bf Proof of Lemma~\ref{lem:F.for.LL}]
With the measure $\mu(d\z) \defeq K(\z) \, d\z$ and the mapping $\vec{T} : \R^d \to \Hds$, $\vec{T}(\z) := (1,\z,\z\z^\top)$, the mapping $\F : \Theta \to \Hds$ is just the gradient of the function $M : \Hds \to \R$ with
\[
	M(\Hvec) \ := \ \int \exp \bigl( \langle \Hvec, \vec{T}(\z) \rangle \bigr) \, \mu(d\z) .
\]
Indeed, for $\Hvec \in \Theta$ and $\vec{\Delta} \in \Hds$,
\begin{align*}
	M(\Hvec + \vec{\Delta}) - M(\Hvec) \
	&= \ \int \langle \vec{\Delta}, \vec{T}(\z) \rangle
		\exp \bigl( \langle \Hvec, \vec{T}(\z) \rangle \bigr) \, \mu(d\z)
		+ O \bigl( \|\vec{\Delta}\|^2 \bigr) \\
	&= \ \Bigl\langle \vec{\Delta},
		\int \vec{T}(\z) \exp \bigl( \langle \Hvec, \vec{T}(\z) \rangle \bigr) \, \mu(d\z)
		\Bigr\rangle + O \bigl( \|\vec{\Delta}\|^2 \bigr) \\
	&= \ \bigl\langle \vec{\Delta}, \F(\Hvec) \bigr\rangle
		+ O \bigl( \|\vec{\Delta}\|^2 \bigr) .
\end{align*}
Furthermore,
\begin{align*}
	\F(\Hvec + \vec{\Delta}) - \F(\Hvec) \
	&= \ \int \vec{T}(\z) \langle \vec{\Delta}, \vec{T}(\z) \rangle
		\exp \bigl( \langle \Hvec, \vec{T}(\z) \rangle \bigr) \, \mu(d\z)
		+ O \bigl( \|\vec{\Delta}\|^2 \bigr) ,
\end{align*}
so the Jacobian operator of $\F$ at $\Hvec \in \Theta$ ist given by
\[
	D\F(\Hvec) \vec{\Delta} \ = \ \int \vec{T}(\z) \langle \vec{\Delta}, \vec{T}(\z) \rangle
		\exp \bigl( \langle \Hvec, \vec{T}(\z) \rangle \bigr) \, \mu(d\z) ,
\]
that means,
\[
	D\F(c,\b,\A)(\beta,\bbeta,\B)
	\ = \ \int K(\z) (1,\z,\z\z^\top)
		\bigl( \beta + \z^\top\bbeta + 2^{-1} \z^\top \B\z \bigr)
		\exp(g_{c,\b,\A}(\z)) \, d\z .
\]

Since we may identify $\Hds$ with $\R^p$, where $p = (1+d)(1 + d/2)$, by choosing some orthonormal basis of $\Hds$, the remaining statements of Lemma~\ref{lem:F.for.LL} are essentially a consequence of Lemma~\ref{lem:mgf.etc}. Indeed, for arbitrary $(c,\b,\A) \in \Hds \setminus \{\bs{0}\}$, the set $\bigl\{ \z \in \R^d : \bigl\langle \vec{T}(\z), (c,\b,\A)\bigr\rangle = 0 \bigr\}$ is either empty or the level set of a nondegenerate affine or quadratic function. Consequently, it has Lebesgue measure $0$.
\end{proof}

For the proof of Theorem~\ref{thm:LL_ell} we need a refinement of Corollary~\ref{cor:LM-Taylor}.

\begin{lem}
\label{lem:LL-Taylor}
The expectation $\bigl( \shx, \bshx, \bShx \bigr)$ of $\bigl( \snhx, \bsnhx, \bSnhx \bigr)$ satisfies
\[
	\bigl( \shx, \bshx, \bShx \bigr)
	\ = \ \F \bigl( \ell(\x), h D\ell(\x), h^2 D^2\ell(\x) \bigr)
		+ \vec{\C}_h(\x)
\]
where
\[
	\vec{\C}_h(\x) \ = \ 
	\Bigl( h^4 c(\x_o) + o(h^4) , \,
			h^3 \bs{c}(\x_o) + o(h^3) , \,
			h^4 \C(\x_o) + o(h^4) \Bigr) ,
\]
with
\begin{align*}
	c(\x_o) \
	&= \ f(\x_o) \int K(\z) \Bigl( \frac{D^4\ell(\x_o;\z)}{24}
			+ \frac{D^1\ell(\x_o;\z) D^3\ell(\x_o;\z)}{6} \Bigr) \, d\z , \\
	\bs{c}(\x_o) \
	&= \ f(\x_o) \int K(\z) \frac{D^3\ell(\x_o;\z)}{6} \z \, d\z , \\
	\C(\x_o) \
	&= \ f(\x_o) \int K(\z) \Bigl( \frac{D^4\ell(\x_o;\z)}{24}
			+ \frac{D^1\ell(\x_o;\z) D^3\ell(\x_o;\z)}{6} \Bigr) \z\z^\top \, d\z .
\end{align*}
\end{lem}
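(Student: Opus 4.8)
The plan is to compare the two sides directly, writing each as an integral against the vector-valued kernel $K(\z)(1,\z,\z\z^\top)$. By the substitution $\y=\x+h\z$ one has $\shx=\int K(\z)f(\x+h\z)\,d\z$, $\bshx=\int K(\z)\z f(\x+h\z)\,d\z$ and $\bShx=\int K(\z)\z\z^\top f(\x+h\z)\,d\z$, while by definition $\F\bigl(\ell(\x),hD\ell(\x),h^2D^2\ell(\x)\bigr)=\int K(\z)(1,\z,\z\z^\top)\exp(g(\z))\,d\z$, where $g(\z):=\ell(\x)+hD^1\ell(\x;\z)+2^{-1}h^2D^2\ell(\x;\z)$ is precisely the second-order Taylor polynomial of $\ell$ at $\x$ evaluated at $\x+h\z$; for $h$ small and $\x$ near $\x_o$ the triple $(\ell(\x),hD\ell(\x),h^2D^2\ell(\x))$ lies in $\Theta$, so the right-hand side is well defined by Lemma~\ref{lem:F.for.LL}. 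Hence $\vec{\C}_h(\x)=\int K(\z)(1,\z,\z\z^\top)\bigl[f(\x+h\z)-\exp(g(\z))\bigr]\,d\z$, and it remains to expand the bracket to order $o(h^4)$ and integrate.

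I would fix $\delta>0$ with $\overline{B(\x_o,2\delta)}\subset\{f>0\}$, so that $\ell=\log f$ is of class $\cC^4$ with bounded derivatives on that ball. For $\x\in B(\x_o,\delta)$ and $\z$ with $h\lVert\z\rVert\le\delta$, Taylor's theorem for $t\mapsto\ell(\x+th\z)$ gives $\ell(\x+h\z)=g(\z)+6^{-1}h^3D^3\ell(\x;\z)+24^{-1}h^4D^4\ell(\x;\z)+h^4\rho_h(\x,\z)$, where $\lvert\rho_h(\x,\z)\rvert\le C\lVert\z\rVert^4$ uniformly in $h,\x$, and $\rho_h(\x,\z)\to0$ as $h\to0$ locally uniformly in $\z$ and $\x$, by uniform continuity of the fourth-order derivatives of $\ell$ on $\overline{B(\x_o,2\delta)}$. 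Since the added exponent is $O(h^3)$ (so its square contributes only $O(h^6)$), exponentiating and expanding $\exp(g(\z))$ in powers of $h$ — with integrability against $K(\z)(1,\z,\z\z^\top)$ guaranteed for $h$ small by condition (K3$^{\mathtt L}$) — and collecting terms up to order $h^4$ yields, on $\{h\lVert\z\rVert\le\delta\}$,
\[
	f(\x+h\z)-\exp(g(\z))
	\ = \ f(\x)\Bigl[6^{-1}h^3D^3\ell(\x;\z)+h^4\bigl(24^{-1}D^4\ell(\x;\z)+6^{-1}D^1\ell(\x;\z)D^3\ell(\x;\z)\bigr)\Bigr]+h^4\tilde\rho_h(\x,\z) ,
\]
the $D^1\ell\,D^3\ell$ contribution arising from the product of the $6^{-1}h^3D^3\ell(\x;\z)$ term with the $hD^1\ell(\x;\z)$ term of $\exp(g(\z))$, and $\tilde\rho_h$ still being of at most polynomial growth in $\z$ with $\tilde\rho_h\to0$ locally uniformly. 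On the complementary region $\{h\lVert\z\rVert>\delta\}$, i.e.\ $\lVert\z\rVert>\delta/h$, both $f(\x+h\z)$, which is bounded, and $\exp(g(\z))$, which for small $h$ is bounded by $C\exp(a\lVert\z\rVert^2)$ with $2a<\epsilon(K)$, contribute only $o(h^N)$ for every $N$ after multiplication by $K(\z)(1,\lVert\z\rVert,\lVert\z\rVert^2)$, because $\int_{\lVert\z\rVert>\delta/h}K(\z)\lVert\z\rVert^r\,d\z$ and $\int_{\lVert\z\rVert>\delta/h}K(\z)\lVert\z\rVert^r\exp(a\lVert\z\rVert^2)\,d\z$ decay faster than any power of $h$, using that $K$ has all polynomial moments and condition (K3$^{\mathtt L}$), respectively.

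Combining these, I would integrate the displayed identity over $\{h\lVert\z\rVert\le\delta\}$, extend it to an integral over $\R^d$ at the cost of an $o(h^N)$ error (applying the previous tail bounds to $f(\x+h\z)$, to $\exp(g(\z))$, and to the polynomial terms $K(\z)(1+\lVert\z\rVert^2)D^k\ell(\x;\z)$), and use that $h^4\int K(\z)(1,\lVert\z\rVert,\lVert\z\rVert^2)\tilde\rho_h(\x,\z)\,d\z=o(h^4)$ by dominated convergence; this gives
\[
	\vec{\C}_h(\x)=f(\x)\int K(\z)(1,\z,\z\z^\top)\Bigl[6^{-1}h^3D^3\ell(\x;\z)+h^4\bigl(24^{-1}D^4\ell(\x;\z)+6^{-1}D^1\ell(\x;\z)D^3\ell(\x;\z)\bigr)\Bigr]\,d\z+o(h^4).
\]
The three components then follow from the parity identity \eqref{eq:moments.symmetry.K}, i.e.\ $\int K(\z)\z^\bgamma\,d\z=0$ for $\bgamma\notin2\N_0^d$: in the scalar slot the $h^3$-term $\int K(\z)D^3\ell(\x;\z)\,d\z$ vanishes (degree $3$) while the degree-$4$ terms survive, giving $h^4c(\x)+o(h^4)$; in the $\z$-slot $\int K(\z)D^3\ell(\x;\z)\,\z\,d\z$ survives (degree $4$) while the $h^4$-terms are of order $h^4=o(h^3)$, giving $h^3\bs{c}(\x)+o(h^3)$; in the $\z\z^\top$-slot the $h^3$-term vanishes (odd degree $5$) while the degree-$6$ terms survive, giving $h^4\C(\x)+o(h^4)$. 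Finally, replacing $\x$ by $\x_o$ in the three integral coefficients is justified by their continuity in $\x$ on $\{f>0\}$ together with $\x\to\x_o$, and reproduces exactly the $c(\x_o)$, $\bs{c}(\x_o)$ and $\C(\x_o)$ of the statement.

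The main obstacle will be the tail bookkeeping just sketched. The clean Taylor expansion of $f(\x+h\z)=\exp(\ell(\x+h\z))$ in terms of derivatives of $\ell$ is available only where $\x+h\z$ stays in a fixed compact neighbourhood of $\x_o$ on which $\ell$ is smooth with controlled derivatives, i.e.\ on $\{h\lVert\z\rVert\le\delta\}$, so one must dispose of $\{h\lVert\z\rVert>\delta\}$ separately for $f(\x+h\z)$ and for $\exp(g(\z))$, the latter relying crucially on the exponential-moment hypothesis (K3$^{\mathtt L}$). Once this is in place, the remaining ingredients — the one-variable Taylor expansion of the exponential and the parity argument via \eqref{eq:moments.symmetry.K} — are routine.
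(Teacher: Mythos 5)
Your proof is correct and follows essentially the same route as the paper's: write $\vec{\C}_h(\x)$ as the integral against $K(\z)(1,\z,\z\z^\top)$ of the difference $f(\x+h\z)-\exp(g_{\ell(\x),hD\ell(\x),h^2D^2\ell(\x)}(\z))$, truncate to a region where the fourth-order Taylor expansion of $\ell(\x+h\z)$ in $h\z$ is available, exponentiate, collect the $h^3$ and $h^4$ terms (the cross term $D^1\ell\,D^3\ell$ arising exactly as you describe), dispose of the tails using (K3$^{\mathtt{L}}$) and the polynomial moments of $K$, and finish by parity and continuity. The only cosmetic difference is the choice of truncation radius: the paper cuts at $\|\z\|\le h^{-1/6}$, which makes the Taylor remainder uniformly $O(h^{5/2})$ on the cutoff set and hence the linearization of the exponential uniformly accurate, whereas your cut at $\|\z\|\le\delta/h$ allows the remainder $r$ to be $O(1)$ near the boundary, but this is harmless because $r^2=O(h^6\|\z\|^6)$ and $\exp(g)$ is bounded on that region, so the quadratic error still integrates to $O(h^6)$ against $K$.
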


\begin{proof}
The difference $\vec{\C}_h(\x)$ of $\bigl( \shx, \bshx, \bShx \bigr)$ and $\F \bigl( \ell(\x), h D\ell(\x), h^2 D^2\ell(\x) \bigr)$ may be written as
\[
	\vec{\C}_h(\x) \ = \ \int K(\z) (1,\z,\z\z^\top)
		\bigl( f(\x + h\z)
			- f_{\ell(\x),hD\ell(\x),h^2D\ell(\x)}^{}(\z) \bigr) \, d\z	.
\]
Now we fix numbers $0 < \eps_o < \eps(K)$ and $\delta_o > 0$ such that the closed ball around $\x_o$ with radius $\delta_o$ is contained in $\{f > 0\}$. Since $h \to 0$ and $\x \to \x_o$, we know that eventually,
\[
	h \|D\ell(\x)\| \le \eps_o , \quad
	h^2 \lambda_{\mathtt{max}}(D^2\ell(\x)) \le \eps_o
	\quad\text{and}\quad
	\{\x + h\z : \|\z\| \le h^{-1/6}\}
	\subset \{\y : \|\y - \x_o\| \le \delta_o\} \subset \{f > 0\} ,
\]
where $\lambda_{\mathtt{max}}(\A)$ stands for the largest eigenvalue of a symmetric matrix $\A$. From now on we assume that these relations are true.

Note first that
\begin{align*}
	f_{\ell(\x),hD\ell(\x),h^2D\ell(\x)}^{}(\z) \
	&= \ f(\x)
		\exp \bigl( h D\ell(\x)^\top\z + 2^{-1} h^2 \z^\top D^2\ell(\x)\z \bigr) \\
	&\le \ \|f\|_\infty \exp \bigl( \eps_o\|\z\| + \eps_o \|\z\|^2/2 \bigr) .
\end{align*}
Consequently, for any fixed $\eps_* \in (\eps_o, \eps(K))$ and $r \ge 0$, the two integrals
\[
	\int_{\{\z : \|\z\| > h^{-1/6}\}} K(\z) \|\z\|^r
		f(\x + h\z) \, d\z
	\quad\text{and}\quad
	\int_{\{\z : \|\z\| > h^{-1/6}\}} K(\z) \|\z\|^r
			f_{\ell(\x),hD\ell(\x),h^2D\ell(\x)}^{}(\z) \, d\z
\]
are bounded by
\begin{align}
\nonumber
	\|f\|_\infty & \int_{\{\z : \|\z\| > h^{-1/6}\}} K(\z)
		\|\z\|^r \exp \bigl( \eps_o\|\z\| + \eps_o \|\z\|^2/2 \bigr) \, d\z \\
\nonumber
	&\le \ \|f\|_\infty \,
		\sup_{t > h^{-1/6}} \,
			t^r \exp \bigl( \eps_o t - (\eps_* - \eps_o) t^2/2 \bigr)
		\int K(\z)
		\|\z\|^r \exp \bigl( \eps_* \|\z\|^2/2 \bigr) \, d\z \\
\label{ineq:tail.integral}
	&= \ o(h^5) .
\end{align}
In fact, for sufficiently small $h > 0$ and any fixed $0 < \kappa < (\eps_* - \eps_o)/2$,
\[
	\sup_{t > h^{-1/6}} \,
		t^r \exp \bigl( \eps_o t - (\eps_* - \eps_o) t^2/2 \bigr)
	= h^{-r/6} \exp \bigl( \eps_o h^{-1/6} - (\eps_*-\eps_o) h^{-1/3}/2 \bigr)
	= O \bigl( \exp(- \kappa h^{-1/3}) \bigr) .
\]
As a first consequence,
\[
	\vec{\C}_h(\x)
	= \int_{\{\z : \|\z\| \le h^{-1/6}\}}
		K(\z) (1,\z,\z\z^\top) f(\x + h\z)
			\Bigl( 1 - \exp \bigl( g_{\ell(\x),hD\ell(\x),h^2D^2\ell(\x)}^{}(\z)
				- \ell(\x + h\z) \bigr) \Bigr)
		+ o(h^5) .
\]
By Taylor's formula, uniformly in $\z$ with $\|\z\| \le h^{-1/6}$,
\[
	g_{\ell(\x),hD\ell(\x),h^2D^2\ell(\x)}^{}(\z)
		- \ell(\x + h\z) \
	= \ \begin{cases}
		\displaystyle
		- h^3 \frac{D^3\ell(\x;\z)}{6} - h^4 \frac{D^4\ell(\x;\z)}{24}
			+ o(h^4) \|\z\|^4 , \\
		O(h^{5/2}) ,
	\end{cases}
\]
and since $1 - \exp(-t) = t + O(t^2)$ as $t \to 0$,
\[
	1 - \exp \bigl( g_{\ell(\x),hD\ell(\x),h^2D^2\ell(\x)}^{}(\z)
		- \ell(\x + h\z) \bigr)
	\ = \ h^3 \frac{D^3\ell(\x;\z)}{6}
		+ h^4 \frac{D^4\ell(\x,\z)}{24} + o(h^4) \|\z\|^4
		+ O(h^5) .
\]
Consequently,
\begin{align*}
	\vec{\C}_h(\x) \
	&= \ \int_{\{\z : \|\z\| \le h^{-1/6}\}}
		K(\z) (1,\z,\z\z^\top) f(\x + h\z)
			\Bigl( h^3 \frac{D^3\ell(\x;\z)}{6}
				+ h^4 \frac{D^4\ell(\x;\z)}{24} \Bigr) \, d\z
		+ o(h^4) \\
	&= \ \int_{\R^d} K(\z) (1,\z,\z\z^\top) f(\x + h\z)
			\Bigl( h^3 \frac{D^3\ell(\x;\z)}{6}
				+ h^4 \frac{D^4\ell(\x;\z)}{24} \Bigr) \, d\z
		+ o(h^4) ,
\end{align*}
where the last step uses \eqref{ineq:tail.integral}. Finally, since $f(\x + h\z) = f(\x) + O(h)\|\z\| = f(\x) \bigl( 1 + h D^1\ell(\x;\z) \bigr) + O(h^2) \|\z\|^2$, we may write
\begin{align*}
	\vec{\C}_h(\x) \
	&= \ f(\x) \int K(\z) (1,\z,\z\z^\top)
		\Bigl( h^3 \frac{D^3\ell(\x;\z)}{6}
			+ h^4 \Bigl( \frac{D^4\ell(\x;\z)}{24}
				+ \frac{D^1\ell(\x;\z) D^3\ell(\x;\z)}{6} \Bigr) \Bigr) \, d\z
		+ o(h^4) .
\end{align*}
Now we may apply \eqref{eq:moments.symmetry.K} and conclude from continuity of $f$ and $\ell^{(\bgamma)}$ for $|\bgamma| \le 4$ that the three components of $\vec{\C}_h(\x)$ are given by
\begin{align*}
	c_h(\x) \
	&= \ h^4 f(\x) \int K(\z) \Bigl( \frac{D^4\ell(\x;\z)}{24}
				+ \frac{D^1\ell(\x;\z) D^3\ell(\x;\z)}{6} \Bigr) \, d\z
		+ o(h^4) \\
	&= \ h^4 f(\x_o) \int K(\z) \Bigl( \frac{D^4\ell(\x_o;\z)}{24}
				+ \frac{D^1\ell(\x_o;\z) D^3\ell(\x_o;\z)}{6} \Bigr) \, d\z
		+ o(h^4) , \\
	\bs{c}_h(\x) \
	&= \ h^3 f(\x) \int K(\z) \frac{D^3\ell(\x;\z)}{6} \z \, d\z
		+ o(h^4) \\
	&= \ h^3 f(\x_o) \int K(\z) \frac{D^3\ell(\x_o;\z)}{6} \z \, d\z
		+ o(h^3) , \\
	\C_h(\x) \
	&= \ h^4 f(\x) \int K(\z) \Bigl( \frac{D^4\ell(\x;\z)}{24}
				+ \frac{D^1\ell(\x;\z) D^3\ell(\x;\z)}{6} \Bigr) \z\z^\top \, d\z
		+ o(h^4) \\
	&= \ h^4 f(\x_o) \int K(\z) \Bigl( \frac{D^4\ell(\x_o;\z)}{24}
				+ \frac{D^1\ell(\x_o;\z) D^3\ell(\x_o;\z)}{6} \Bigr) \z\z^\top \, d\z
		+ o(h^4) .
\end{align*}\\[-5ex]
\end{proof}

A second ingredient for the proof of Theorem~\ref{thm:LL_ell} is an expansion for $D\F \bigl( \ell(\x), hD^\ell(\x), h^2D^2\ell(\x) \bigr)$.

\begin{lem}
\label{lem:LL-DF}
For arbitrary $(\beta,\bbeta,\B) \in \Hds$,
\begin{align*}
	D\F &\bigl( \ell(\x), hD\ell(\x), h^2 D^2\ell(\x) \bigr) (\beta,\bbeta,\B) \\
	= \ &f(\x) (\beta, \bbeta, \B)
	 	+ h f(\x) \Bigl( D\ell(\x)^\top \bbeta,
			\beta D\ell(\x) + (\B \odot \M) D\ell(\x),
			\bigl( \bbeta D\ell(\x)^\top + D\ell(\x) \bbeta^\top) \odot \M
			\Bigr) \\
		&+ \ O(h^2) \lVert (\beta,\bbeta,\B) \rVert .
\end{align*}
\end{lem}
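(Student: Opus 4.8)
The plan is to read $D\F$ off the integral formula in Lemma~\ref{lem:F.for.LL}, insert the base point $(c,\b,\A) = \bigl(\ell(\x), h D\ell(\x), h^2 D^2\ell(\x)\bigr)$, and expand the exponential weight to first order in $h$ with a controlled quadratic remainder. Since $g_{\ell(\x),hD\ell(\x),h^2D^2\ell(\x)}(\z) = \ell(\x) + h D\ell(\x)^\top\z + 2^{-1} h^2 \z^\top D^2\ell(\x)\z$ and $\langle(\beta,\bbeta,\B),(1,\z,\z\z^\top)\rangle = \beta + \z^\top\bbeta + 2^{-1}\z^\top\B\z$, Lemma~\ref{lem:F.for.LL} gives
\[
	D\F\bigl(\ell(\x), hD\ell(\x), h^2D^2\ell(\x)\bigr)(\beta,\bbeta,\B)
	\ = \ f(\x) \int K(\z)\,(1,\z,\z\z^\top)\,
		\bigl( \beta + \z^\top\bbeta + 2^{-1}\z^\top\B\z \bigr)\,
		e^{t_h(\z)}\,d\z ,
\]
with $t_h(\z) := h D\ell(\x)^\top\z + 2^{-1} h^2\z^\top D^2\ell(\x)\z$. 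I would then write $e^{t} = 1 + t + r(t)$ with $|r(t)| \le 2^{-1} t^2 e^{|t|}$ and split this into a zeroth-order part (weight $1$), a first-order part (weight $h D\ell(\x)^\top\z$), and a remainder (weight $2^{-1}h^2\z^\top D^2\ell(\x)\z + r(t_h(\z))$).

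The two leading parts are just moment bookkeeping with the symmetric kernel. The zeroth-order part is $f(\x)\int K(\z)(1,\z,\z\z^\top)\bigl(\beta + \z^\top\bbeta + 2^{-1}\z^\top\B\z\bigr)\,d\z = D\F(\ell(\x),\bs{0},\bs{0})(\beta,\bbeta,\B) = f(\x)\,\bs{J}(\beta,\bbeta,\B)$ by Lemma~\ref{lem:F.for.LL} and Corollary~\ref{cor:LM-Taylor}. The first-order part is a finite linear combination of moments $\mu_\bgamma$ of $K$ with $|\bgamma| \le 5$, all finite by the standing moment assumption on $K$; the extra odd factor $D\ell(\x)^\top\z$ raises the total monomial degree by one, so the moment-symmetry relation \eqref{eq:moments.symmetry.K} kills every summand of odd degree, and the surviving even moments reduce to $\mu_2 = 1$ (condition (K2)), $\mu_4$ and $\mu_{22}$. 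A componentwise evaluation, using the structure of $\bs{J}$ and of $\M$ from Corollary~\ref{cor:LM-Taylor}, then produces $h f(\x)$ times the stated triple; each component is linear in $(\beta,\bbeta,\B)$ with coefficients polynomial in the entries of $D\ell(\x)$ and in $\mu_2,\mu_4,\mu_{22}$, hence bounded as $\x \to \x_o$, so this part is $h\cdot O(\|(\beta,\bbeta,\B)\|)$.

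The hard part is the remainder, since $e^{2^{-1}h^2\z^\top D^2\ell(\x)\z}$ is not globally bounded and therefore cannot be dominated by a constant times an integrable function; here one must exploit the exponential moment condition (K3$^{\mathtt{L}}$), and I would mimic the tail argument from the proof of Lemma~\ref{lem:LL-Taylor}. Fix $0 < \eps_o < \eps_* < \eps(K)$; since $h \to 0$ and $\x \to \x_o$, eventually $h\|D\ell(\x)\| \le \eps_o$ and $h^2|\lambda| \le \eps_o$ for every eigenvalue $\lambda$ of $D^2\ell(\x)$. Split $\R^d$ at $\|\z\| = h^{-1/6}$. On $\{\|\z\| \le h^{-1/6}\}$ one has $|t_h(\z)| = O(h^{5/6}) = o(1)$, so $e^{|t_h(\z)|} = O(1)$ and $|2^{-1}h^2\z^\top D^2\ell(\x)\z + r(t_h(\z))| \le C h^2\|\z\|^2$; together with $\|(1,\z,\z\z^\top)\|\,\bigl|\beta + \z^\top\bbeta + 2^{-1}\z^\top\B\z\bigr| \le C\,\|(\beta,\bbeta,\B)\|\,(1 + \|\z\|^4)$ (Cauchy--Schwarz in $\Hds$) and finiteness of all polynomial moments of $K$, the bulk contributes $O(h^2)\,\|(\beta,\bbeta,\B)\|$. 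On $\{\|\z\| > h^{-1/6}\}$ one has $e^{2^{-1}h^2\z^\top D^2\ell(\x)\z} \le e^{\eps_o\|\z\|^2/2}$ and $|t_h(\z)| \le \eps_o(\|\z\| + \|\z\|^2)$, so the integrand is dominated by $K(\z)\,(1 + \|\z\|^{8})\,e^{\eps_o\|\z\|}\,e^{\eps_o\|\z\|^2}\,\|(\beta,\bbeta,\B)\|$; splitting off $e^{\eps_*\|\z\|^2}$ and invoking \eqref{ineq:tail.integral} (where $\sup_{t > h^{-1/6}} t^r \exp(\eps_o t - (\eps_* - \eps_o)t^2/2) = O(\exp(-\kappa h^{-1/3}))$ for $0 < \kappa < (\eps_* - \eps_o)/2$) shows this is $O(\exp(-\kappa h^{-1/3}))\,\|(\beta,\bbeta,\B)\| = o(h^2)\,\|(\beta,\bbeta,\B)\|$. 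Adding the three pieces yields the claimed expansion, uniformly in $(\beta,\bbeta,\B)$ up to the factor $\|(\beta,\bbeta,\B)\|$. In short, the only genuine difficulty is this uniform control of the quadratic-exponential remainder; the first-order coefficients then fall out of the routine symmetric-moment computation.
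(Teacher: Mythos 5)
Your proof strategy is valid in its core technique but genuinely different from the paper's. The paper invokes the $C^\infty$ smoothness of $\F$ (established in Lemma~\ref{lem:F.for.LL} via the moment-generating-function machinery of Lemma~\ref{lem:mgf.etc}) to write a uniform second-order Taylor expansion of $D\F$ on a compact subset of $\Theta$ containing the segment from $(\ell(\x),\bs{0},\bs{0})$ to $(\ell(\x),hD\ell(\x),h^2D^2\ell(\x))$; the $O(\lVert\vec{\Delta}\rVert^2)\lVert\Bvec\rVert$ remainder then costs nothing, and the work reduces to writing out $\left.\tfrac{d}{dt}\right|_{t=0} D\F(\Hvec+t\vec{\Delta})\Bvec$ as an integral and doing moment bookkeeping. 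You instead expand $e^{t_h(\z)}$ inside the integral and control the quadratic remainder by the $\lVert\z\rVert = h^{-1/6}$ split and the exponential moment condition (K3$^{\mathtt{L}}$), mirroring the tail argument from the paper's proof of Lemma~\ref{lem:LL-Taylor}; you correctly identify this tail control as the genuine difficulty. Both routes converge to the same moment computation for the order-$h$ term. What is missing in your write-up is that computation itself: you assert it ``produces $hf(\x)$ times the stated triple'' without carrying it out, and you record the zeroth-order term as $f(\x)\bs{J}(\beta,\bbeta,\B)$ without noting that this does not agree with the lemma's leading term $f(\x)(\beta,\bbeta,\B)$ (and $\bs{J}$ is not the identity). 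In fact the paper's own proof derives an $h$-term carrying two additional summands, $2^{-1}\mu_{22}\tr(\B)D\ell(\x)$ in the second component and $\mu_{22}D\ell(\x)^\top\bbeta\,\I_d$ in the third, which are absent from the displayed lemma; the full formula is what the paper then uses in the proof of Theorem~\ref{thm:LL_ell}, so the stated lemma appears to carry typos. A fully executed version of your moment bookkeeping would uncover exactly that discrepancy rather than reproduce the stated triple, so you should carry out the computation explicitly rather than appeal to it.
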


\begin{proof}
We know that $\F$ is infinitely often differentiable on $\Theta$. Hence, for any compact subset $\KK$ of $\Theta$,
\[
	D\F(\Hvec + \vec{\Delta})\Bvec
	\ = \ DF(\Hvec)\Bvec
		+ \left. \frac{d}{dt} \right\vert_{t=0} D\F(\Hvec + t\vec{\Delta})\Bvec
		+ O \bigl( \lVert\vec{\Delta}\rVert^2 \bigr) \lVert\Bvec\rVert
\]
uniformly in $\Hvec, \Hvec + \vec{\Delta} \in \KK$. With the same notation as in the proof of Lemma~\ref{lem:F.for.LL},
\begin{align*}
	\left. \frac{d}{dt} \right\vert_{t=0} D\F(\Hvec + t\vec{\Delta})\Bvec
	&= \left. \frac{d}{dt} \right\vert_{t=0}
		\int K(\z) \vec{T}(\z) \langle\vec{T}(\z), \Bvec\rangle
		\exp \bigl( \langle \vec{T}(\z),\Hvec + t\vec{\Delta}\rangle \bigr) \, d\z \\
	&= \int K(\z) \vec{T}(\z) \langle\vec{T}(\z), \Bvec\rangle
		\langle\vec{T}(\z), \vec{\Delta}\rangle
		\exp \bigl( \langle \vec{T}(\z),\Hvec\rangle \bigr) \, d\z .
\end{align*}
Applying these findings to $\Hvec = (\ell(\x), \bs{0}, \bs{0})$ and $\vec{\Delta} = \bigl( 0, hD\ell(\x), h^2 D^2\ell(\x) \bigr)$ and $\Bvec = (\beta,\bbeta,\B)$, we conclude that
\begin{align*}
	D\F &\bigl( \ell(\x), hD\ell(\x), h^2 D^2\ell(\x) \bigr) (\beta,\bbeta,\B) \\
	= \ &f(\x) \bs{J}(\beta, \bbeta, \B)
	 	+ f(\x) \int K(\z) (1,\z,\z\z^\top)
			(\beta + \z^\top\bbeta + 2^{-1} \z^\top\B\z) 
			\bigl( h \z^\top D\ell(\x) + 2^{-1} h^2 \z^\top D^2\ell(\x)\z \bigr)
			\, d\z \\
		&+ \ O(h^2) \lVert (\beta,\bbeta,\B) \rVert \\
	= \ &f(\x) \bs{J}(\beta, \bbeta, \B)
	 	+ h f(\x) \int K(\z) (1,\z,\z\z^\top)
			(\beta + \z^\top\bbeta + 2^{-1} \z^\top\B\z) 
			\z^\top D\ell(\x) \, d\z
		\, + O(h^2) \lVert (\beta,\bbeta,\B) \rVert \\
	= \ &f(\x) \bs{J}(\beta, \bbeta, \B) \\
	 	&+ \ h f(\x) \Bigl( D\ell(\x)^\top \bbeta,
			(\beta + 2^{-1}\mu_{22} \tr(\B)) \I_d + (\B \odot \M) \bigr) D\ell(\x), \\
		& \qquad\qquad\qquad
			\bigl( \bbeta D\ell(\x)^\top + D\ell(\x) \bbeta^\top) \odot \M
				+ \mu_{22} \bbeta^\top D\ell(\x) \I_d \Bigr)
		\, + O(h^2) \lVert (\beta,\bbeta,\B) \rVert ,
\end{align*}
because \eqref{eq:moments.symmetry.K} and the definition of $\bs{J}$ imply that
\begin{align*}
	\int K(\z) (\beta + \z^\top\bbeta + 2^{-1} \z^\top\B\z)
		\z^\top D\ell(\x) \, d\z
	&= \int K(\z) \bbeta^\top \z\z^\top D\ell(\x) \, d\z \\
	&= \ D\ell(\x)^\top\bbeta , \\ 
	\int K(\z) (\beta + \z^\top\bbeta + 2^{-1} \z^\top\B\z) \z
		\z^\top D\ell(\x) \, d\z
	&= \int K(\z) (\beta + 2^{-1} \z^\top\B\z) \z\z^\top \, d\z \, D\ell(\x) \\
	&= \bigl( (\beta + 2^{-1}\mu_{22} \tr(\B)) \I_d + (\B \odot \M) \bigr) D\ell(\x) , \\
	\int K(\z) (\beta + \z^\top\bbeta + 2^{-1} \z^\top\B\z) \z\z^\top
		\z^\top D\ell(\x) \, d\z
	&= \int K(\z) \z^\top \bbeta D\ell(\x)^\top\z \, \z\z^\top \, d\z \\
	&= \int K(\z) 2^{-1} \z^\top\A\z \, \z\z^\top \, d\z \\
	&= \A \odot \M + 2^{-1} \mu_{22} \tr(\A) \I_d
\end{align*}
with $\A = \bbeta D\ell(\x)^\top + D\ell(\x) \bbeta^\top$.
\end{proof}

\begin{proof}[\bf Proof of Theorem~\ref{thm:LL_ell}]
By means of Lemma~\ref{lem:LL-Taylor} we can write
\begin{align*}
	\bigl( \snhx, \bsnhx, \bSnhx \bigr)
	&= \bigl( \shx, \bshx, \bShx \bigr) + \Wvec_{n,h}^{}(\x) \\
	&= \F \bigl( \ell(\x), hD\ell(\x), h^2 D^2\ell(\x) \bigr)
		+ \vec{\C}_h(\x) + \Wvec_{n,h}^{}(\x) ,
\end{align*}
where
\[
	\Wvec_{n,h}^{}(\x)
	\defeq \frac{1}{nh^d} \sum_{i=1}^n
		\bigl[ \Hvec(h^{-1}(\X_i - \x)) - \Ex \Hvec(h^{-1}(\X_1 - \x)) \bigr]
	\quad\text{and}\quad
	\Hvec(\z) \defeq K(\z) (1, \z, \z\z^\top) .
\]
Then $\Wvec_{n,h}^{}(\x) = O_p \bigl( (nh^d)^{-1/2} \bigr)$, $\vec{\C}_h(\x) = O(h^3)$ and $\bs{J} \bigl( f(\x), hDf(\x), h^2 D^2f(\x) \bigr) \to f(\x_o) (1, \bs{0}, \I_d) \in \F(\Theta)$. Consequently, the triple $\bigl( \snhx, \bsnhx, \bSnhx \bigr)$ is contained in $\F(\Theta)$ with asymptotic probability one. Moreover, since $D\F \bigl( f(\x_o)(1,\bs{0},\I_d) \bigr) = f(\x_o) \bs{J}$, it follows from Lemma~\ref{lem:delta-method} that
\begin{align*}
	\bigl( \hat{\ell}_{n,h}^{\mathtt{L}}(\x),
		h\hat{D\ell}_{n,h}^{\mathtt{L}}(\x),
		h^2 \hat{D^2\ell}_{n,h}^{\mathtt{L}} \bigr)
	= \ &\F^{-1} \bigl( \snhx, \bsnhx, \bSnhx \bigr) \\
	= \ &\F^{-1} \Bigl( \F \bigl( \ell(\x), hD\ell(\x), h^2 D^2\ell(\x) \bigr)
			+ \vec{\C}_h(\x) \Bigr) \\
		&+ \ \Yvecnhx + o_p \bigl( (nh^d)^{-1/2} \bigr) ,
\end{align*}
where
\[
	\Yvecnhx
	\defeq f(\x_o)^{-1} \bs{J}^{-1}\Wvec_{n,h}^{}(\x)
	= \frac{1}{nh^d} \sum_{i=1}^n
		\bigl[ \Gvec(h^{-1}(\X_i - \x)) - \Ex \Gvec(h^{-1}(\X_1 - \x)) \bigr]
\]
with
\[
	\Gvec(\z) \defeq f(\x_o)^{-1} \bs{J}^{-1} \Hvec(\z)
	= f(\x_o)^{-1} K(\z) \bs{J}^{-1} (1, \z, \z\z^\top) .
\]
This proves the assertion about the stochastic component of the local log-likelihood estimator. It remains to analyze the bias
\[
	\Bvec_h(\x)
	= \F^{-1} \Bigl( \F \bigl( \Lvec_h(\x) \bigr)
			+ \vec{\C}_h(\x) \Bigr)
		- \Lvec_h(\x)
	= \F^{-1} \Bigl( \F \bigl( \Lvec_h(\x) \bigr)
			+ \vec{\C}_h(\x) \Bigr)
		- \F^{-1} \Bigl(
			\F \bigl( \Lvec_h(\x) \bigr) \Bigr) ,
\]
where $\Lvec_h(\x) \defeq \bigl( \ell(\x), hD\ell(\x), h^2 D^2\ell(\x) \bigr)$. Since $\F^{-1}$ is twice continuously differentiable and $\vec{\C}_h(\x) = O(h^3)$, it follows from Taylor's formula that
\[
	\Bvec_h(\x)
	= D\F^{-1} \bigl( \F \bigl( \Lvec_h(\x) \bigr) \bigr) \vec{\C}_h(\x) + O(h^6)
	= D\F \bigl( \Lvec_h(\x) \bigr)^{-1} \vec{\C}_h(\x) + O(h^6) .
\]
We know from Lemma~\ref{lem:LL-DF} that
\[
	D\F \bigl( \Lvec_h(\x) \bigr)
	= D\F \bigl( \ell(\x), \bs{0}, \bs{0} \bigr) + \bs{D}_h(\x) + O(h^2)
	= f(\x) \J + \bs{D}_h(\x) + O(h^2)
\]
with the linear operator $\bs{D}_h(\x) : \Hds \to \Hds$ given by
\begin{align*}
	\bs{D}_h(\x) (\beta, \bbeta, \B)
	= h f(\x) \Bigl( D\ell(\x)^\top \bbeta,
			& \bigl( (\beta + 2^{-1}\mu_{22} \tr(\B)) \I_d + (\B \odot \M) \bigr)
				D\ell(\x), \\
			& \quad
				\bigl( \bbeta D\ell(\x)^\top + D\ell(\x) \bbeta^\top) \odot \M
				+ \mu_{22} D\ell(\x)^\top\bbeta \I_d
			\Bigr) .
\end{align*}
Consequently, $\bs{D}_h(\x) = O(h)$, and via a suitable adaptation of \eqref{eq:vonNeumann},
\begin{align*}
	D\F \bigl( \Lvec_h(\x) \bigr)^{-1} \vec{\C}_h(\x)
	= \ &f(\x)^{-1} \J^{-1} \vec{\C}_h(\x)
		- f(\x)^{-2} \J^{-1} \bs{D}_h(\x) \J^{-1} \vec{\C}_h(\x)
		+ O(h^2) \lVert \vec{\C}_h(\x)\rVert \\
	= \ & \J^{-1} \bigl( f(\x)^{-1} \vec{\C}_h(\x)
		- f(\x)^{-2} \bs{D}_h(\x) \J^{-1} \vec{\C}_h(\x) \bigr)
		+ O(h^5) .
\end{align*}
Note that
\begin{align*}
	\vec{\C}_h(\x)
	&= f(\x) \bigl( h^4 c(\x_o) + o(h^4),
		h^3 \c(\x_o) + o(h^3), h^4 \C(\x_o) + o(h^4) \bigr) \\
	&= f(\x_o) \bigl( h^4 c(\x_o) + o(h^4),
		h^3 \c(\x_o) + o(h^3), h^4 \C(\x_o) + o(h^4) \bigr)
\end{align*}
so
\[
	\J^{-1} \vec{\C}_h(\x) = f(\x_o) \bigl( O(h^4), h^3 \c(\x_o) + o(h^3), O(h^4) \bigr) 
\]
because the first and third component of $\J(\beta,\bbeta,\B)$ are linear functions of $(\beta,\B)$, while the second component just equals $\bbeta$. Together with $\D_h(\x) = O(h)$, this implies that
\begin{align*}
	f&(\x)^{-2} \J^{-1} \bs{D}_h(\x) \J^{-1} \vec{\C}_h(\x) \\
	&= f(\x)^{-2} \J^{-1} \bs{D}_h(\x) \bigl( 0, h^3 \c(\x_o) + o(h^3), \bs{0} \bigr)
		+ O(h^5) \\
	&= h^4 f(\x)^{-1} \J^{-1}
		\Bigl( \c(\x_o)^\top D\ell(\x) ,
			\bs{0} ,
			\bigl( \c(\x_o)D\ell(\x)^\top + D\ell(\x)\c(\x_o)^\top \bigr) \odot \M
				+ \mu_{22} D\ell(\x)^\top\c(\x_o) \I_d \Bigr)
		+ o(h^4) \\
	&= h^4 f(\x)^{-1} \Bigl( 0, \bs{0},
		\c(\x_o)D\ell(\x)^\top + D\ell(\x)\c(\x_o)^\top \Bigr)
		+ o(h^4) \\
	&= h^4 f(\x_o)^{-1} \Bigl( 0, \bs{0},
		\c(\x_o)D\ell(\x)^\top + D\ell(\x)\c(\x_o)^\top \Bigr)
		+ o(h^4) ,
\end{align*}
where the second last step follows from tedious but elementary calculations and Lemma~\ref{lem:Inverse.J}. This proves the asserted representation of $\Bvec_h(\x)$, because $b(\x_o) = f(\x_o)^{-1} c(\x_o)$, $\b(\x_o) = f(\x_o)^{-1} \c(\x_o)$ and $\B(\x_o) = f(\x_o)^{-1} \C(\x_o)$.
\end{proof}

\subsection{Local Hyv\"arinen Score Estimation}
\label{app:LH}

In the proof of \eqref{eq:HS_1} and \eqref{eq:HS_2} we need a basic result about quadratic functions on $\Rddsym$.

\begin{lem}
\label{lem:quadratic.function.Rddsym}
For given matrices $\bSigma \in \Rddsym$ and $\B \in \R^{d\times d}$, where $\bSigma$ is positive definite, let $H : \Rddsym \to \R$ be given by
\[
	H(\A) \ \defeq \ \tr(2^{-1} \A^2 \bSigma) - \tr(\A\B) .
\]
This function $H$ has a unique minimizer $\T_{\bSigma}(\B) \in \Rddsym$. It is the unique solution $\A \in \Rddsym$ of the equation
\[
	\bSigma\A + \A \bSigma \ = \ \B + \B^\top .
\]
If $\B$ is symmetric and $\bSigma\B = \B\bSigma$, then the minimizer is given by
\[
	\T_{\bSigma}(\B) \ = \ \bSigma^{-1} \B .
\]
In general, if $\bSigma = \V \diag(\bs{\lambda}) \V^\top$ for some orthogonal matrix $\V \in \R^{d\times d}$ and a vector $\bs{\lambda} \in (0,\infty)^d$, then the minimizer is given by
\[
	\T_{\bSigma}(\B) \ = \ \V \Bigl( \frac{(\V^\top(\B + \B^\top) \V)_{ij}}
		{\lambda_i + \lambda_j} \Bigr)_{i,j=1}^d \V^\top .
\]
\end{lem}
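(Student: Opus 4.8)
The plan is to regard $H$ as a quadratic function on the finite-dimensional Euclidean space $\Rddsym$, compute its gradient, and read the minimizer off the resulting first-order condition. First I would expand, for $\A,\bs{G} \in \Rddsym$ and $t \in \R$,
\[
	H(\A + t\bs{G})
	\ = \ H(\A)
		+ t \bigl( 2^{-1} \tr\bigl( (\A\bs{G} + \bs{G}\A)\bSigma \bigr) - \tr(\bs{G}\B) \bigr)
		+ 2^{-1} t^2 \tr(\bs{G}^2\bSigma) .
\]
Using cyclicity of the trace together with the symmetry of $\bs{G}$ and $\bSigma$, the coefficient of $t$ rewrites as $2^{-1} \tr\bigl( \bs{G}\,(\bSigma\A + \A\bSigma - \B - \B^\top) \bigr)$, and the coefficient of $t^2$ equals $2^{-1}\tr(\bs{G}\bSigma\bs{G}) = 2^{-1}\lVert\bSigma^{1/2}\bs{G}\rVert_F^2$. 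Since $\bSigma$ is positive definite, the latter is strictly positive whenever $\bs{G} \ne \bs{0}$, so $H$ is strictly convex and coercive on $\Rddsym$; hence it has a unique minimizer, namely the unique point at which the gradient vanishes.

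Next I would exploit that $\bs{E}(\A) \defeq \bSigma\A + \A\bSigma - \B - \B^\top$ is symmetric, being the sum of the symmetric matrices $\bSigma\A + \A\bSigma$ and $-(\B + \B^\top)$. The stationarity condition $\tr(\bs{G}\,\bs{E}(\A)) = 0$ for all $\bs{G} \in \Rddsym$, applied with $\bs{G} = \bs{E}(\A)$, gives $\lVert\bs{E}(\A)\rVert_F^2 = 0$, so the minimizer $\T_{\bSigma}(\B)$ is precisely the solution $\A \in \Rddsym$ of the Lyapunov-type equation $\bSigma\A + \A\bSigma = \B + \B^\top$.

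To prove that this equation has a unique solution and to extract the closed form, I would diagonalize $\bSigma = \V\diag(\bs{\lambda})\V^\top$ with $\V$ orthogonal and all $\lambda_i > 0$; writing $\tilde{\A} \defeq \V^\top\A\V$ and $\tilde{\bs{C}} \defeq \V^\top(\B + \B^\top)\V$, the equation becomes $(\lambda_i + \lambda_j)\tilde{A}_{ij} = \tilde{C}_{ij}$ for every pair $i,j$, whose unique solution $\tilde{A}_{ij} = \tilde{C}_{ij}/(\lambda_i + \lambda_j)$ is symmetric because $\tilde{\bs{C}}$ is. Transforming back yields the stated formula $\T_{\bSigma}(\B) = \V\bigl((\V^\top(\B + \B^\top)\V)_{ij}/(\lambda_i + \lambda_j)\bigr)_{i,j=1}^d \V^\top$, and the positivity of all $\lambda_i + \lambda_j$ re-confirms existence and uniqueness.

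Finally, for the commuting case I would simply verify that $\A \defeq \bSigma^{-1}\B$ solves the equation: from $\bSigma\B = \B\bSigma$ one gets $(\bSigma^{-1}\B)^\top = \B^\top\bSigma^{-1} = \B\bSigma^{-1} = \bSigma^{-1}\B$, so $\A \in \Rddsym$, and $\bSigma\A + \A\bSigma = \B + \bSigma^{-1}\B\bSigma = \B + \B = \B + \B^\top$ using the commutation relation and $\B = \B^\top$; by the uniqueness already established, $\T_{\bSigma}(\B) = \bSigma^{-1}\B$. I do not expect a genuine obstacle; the whole argument is routine linear algebra. The one point deserving a moment's attention is that the first-order condition is tested only against symmetric perturbations $\bs{G}$, not against all of $\R^{d\times d}$ --- this still pins down $\A$ uniquely precisely because the ``gradient matrix'' $\bs{E}(\A)$ itself lies in $\Rddsym$, so the bilinear pairing $\bs{G} \mapsto \tr(\bs{G}\,\bs{E}(\A))$ on $\Rddsym$ already separates points. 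The recognition (and explicit inversion) of the Lyapunov operator $\A \mapsto \bSigma\A + \A\bSigma$ is the structural crux of the proof.
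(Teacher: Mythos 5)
Your proposal is correct and follows essentially the same route as the paper: expand $H$ around a symmetric increment to exhibit the positive-definite quadratic part (hence strict convexity and coercivity), read off the first-order condition as the Lyapunov equation $\bSigma\A + \A\bSigma = \B + \B^\top$ using that the residual $\bs{E}(\A)$ is itself symmetric, then solve by diagonalizing $\bSigma$ and verify the commuting case directly. Your remark that testing only against symmetric perturbations already suffices (via $\bs{G} = \bs{E}(\A)$) is a nice way to make explicit a point the paper leaves implicit.
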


\begin{proof}
Note first that $H$ is a continuous function with $H(\A) \ge 2^{-1} \lambda_{\rm min}(\bSigma) \lVert\A\rVert_F^2 - \lVert\A\lVert_F \lVert\B\rVert_F \to \infty$ as $\lVert\A\rVert_F \to \infty$. Hence it has at least one minimizer. Furthermore, for $\A, \Delta \in \Rddsym$,
\begin{align*}
	H(\A + \Delta) - H(\A) \
	&= \ 2^{-1} \tr(\Delta^2 \bSigma)
		+ 2^{-1} \tr( \A\Delta \bSigma + \Delta\A\bSigma) - \tr(\Delta\B) \\
	&= \ 2^{-1} \tr(\Delta^2 \bSigma)
	 	+ 2^{-1} \tr \bigl( \Delta( \bSigma\A + \A\bSigma - \B - \B^\top) \bigr) .
\end{align*}
This shows that $H$ is strictly convex, and $\A$ minimizes $H$ if and only if $\tr \bigl( \Delta( \bSigma\A + \A\bSigma - \B - \B^\top) \bigr) = 0$ for all $\Delta \in \Rddsym$, which is equivalent to
\[
	\bSigma\A + \A\bSigma \ = \ \B + \B^\top .
\]
If $\B$ is symmetric and commutes with $\bSigma$, i.e.\ $\bSigma\B = \B\bSigma$, then $\A := \bSigma^{-1}\B = \B\bSigma^{-1}$ defines a symmetric matrix, and
\[
	\bSigma\A + \A\bSigma \ = \ 2\B \ = \ \B + \B^\top .
\]
In general, with the spectral representation $\bSigma = \V \diag(\bs{\lambda}) \V^\top$, 
\begin{align*}
	\bSigma\A + \A\bSigma \
	&= \ \V
		\bigl( \diag(\bs{\lambda}) \V^\top \A \V + \V^\top\A\V \diag(\bs{\lambda}) \bigr)
		\V^\top \\
	&= \ \V \bigl( (\lambda_i + \lambda_j) (\V^\top\A\V)_{ij} \bigr)_{i,j=1}^d \V^\top
\end{align*}
is equal to
\[
	\B + \B^\top
	\ = \ \V \bigl( (\V^\top(\B + \B^\top)\V)_{ij} \bigr)_{i,j=1}^d \V^\top
\]
if and only if
\[
	\V^\top\A\V \ = \ \Bigl( \frac{(\V^\top(\B + \B^\top) \V)_{ij}}
		{\lambda_i + \lambda_j} \Bigr)_{i,j=1}^d ,
\]
and this is equivalent to the asserted formula for $\A = \T_{\bSigma}(\B)$.
\end{proof}

\begin{proof}[\bf Proof of \eqref{eq:HS_1} and \eqref{eq:HS_2}]
Note first that for fixed $\A \in \Rddsym$,
\begin{align*}
	\snhx^{-1} \hat{S}_{n,h}^{\mathtt{H}}(\b,\A,\x) \
	= \ &2^{-1} \lVert\b\rVert^2 + \b^\top \bigl( h \A\munhx + h^{-1} \bqnhx \bigr)
		+ \mathrm{const}_{n,h}^{}(\A,\x) \\
	= \ &2^{-1} \bigl\lVert \b + h\A\munhx + h^{-1} \bqnhx \bigr\rVert^2
		- 2^{-1} \bigl\lVert h\A\munhx + h^{-1} \bqnhx \bigr\rVert^2 \\
		&+ \ \mathrm{const}_{n,h}^{}(\A,\x) \\
	= \ &2^{-1} \bigl\lVert \b + h\A\munhx + h^{-1} \bqnhx \bigr\rVert^2 \\
		&- \ 2^{-1} h^2 \tr \bigl( \A^2 \munhx\munhx^\top \bigr)
		- \bqnhx^\top \A\munhx - 2^{-1} h^{-2} \lVert\bqnhx\rVert^2\\
		&+ \ \mathrm{const}_{n,h}^{}(\A,\x) ,
\end{align*}
and the unique minimizer of this, as a function of $\b$, equals
\[
	\b(\A) \ \defeq \ - h \A\munhx - h^{-1} \bqnhx .
\]
Now our task is to minimize
\begin{align*}
	\snhx^{-1} \hat{S}_{n,h}^{\mathtt{H}}(\b(\A),\A,\x) \
	= \ &2^{-1} h^2 \tr \bigl( \A^2 \Sigmanhx \bigr)
		- \tr(\A (\munhx\bqnhx^\top - \I_d - \bQnhx) \bigr) \\
		&+ \ \mathrm{const}_{n,h}^{}(\x) \\
	= \ &h^2 H(\A) + \mathrm{const}_{n,h}^{}(\x) ,
\end{align*}
where $H$ is defined as in Lemma~\ref{lem:quadratic.function.Rddsym} with $\bSigma = \Sigmanhx$ and $\B = h^{-2} (\munhx\bqnhx^\top - \I_d - \bQnhx)$. Consequently, the optimal $\A$ is given by
\[
	h^2 \A \ = \ \T_{\Sigmanhx}^{}(\munhx\bqnhx^\top - \I_d - \bQnhx)
	\ = \ h^2 \hat{D^2\ell}_{n,h}^{\mathtt{H}}(\x) ,
\]
and then the resulting optimal $\b(\A)$ is given by
\[
	h \b = \ - h^2 \A \munhx - \bqnhx
	\ = \ - h^2 \hat{D^2\ell}_{n,h}^{\mathtt{H}}(\x) \munhx - \bqnhx
	\ = \ h \hat{D\ell}_{n,h}^{\mathtt{H}}(\x) .
\]\\[-5ex]
\end{proof}

\subsection{Linear Expansions}
\label{app:LE}

\begin{proof}[\bf Proof of asymptotic normality of $(nh^d)^{1/2} \Yvecnhx$.]
We apply the multivariate Central Limit Theorem (see Theorem~\ref{thm:Lindeberg}) to
\[
	(nh^d)^{1/2} \Yvecnhx
	\ = \ \sum_{i=1}^n \bigl[ \Zvec_{n,h,i}(\x) - \Ex \Zvec_{n,h,1}(\x) \bigr]
	\quad\text{with} \
	\Zvec_{n,h,i}(\x) \defeq (nh^d)^{-1/2} \Gvec(h^{-1}(\X_i - \x)) .
\]
Since $\Zvec_{n,h,1}(\x), \ldots, \Zvec_{n,h,n}(\x)$ are identically distributed, it suffices to show that for any fixed $\Hvec \in \Hds$,
\begin{equation}
\label{eq:Lindeberg1}
	n \Var \bigl( \bigl\langle\Zvec_{n,h,1}(\x),\Hvec\bigr\rangle\bigr)
	\ \to \ f(\x_o) \bSigma(\Hvec)
\end{equation}
and
\begin{equation}
\label{eq:Lindeberg2}
	n \Ex \bigl( \lVert\Zvec_{n,h,1}(\x)\rVert^2
		\min \bigl\{ \lVert\Zvec_{n,h,1}(\x)\rVert, 1 \bigr\} \bigr)
	\ \to \ 0 .
\end{equation}
The left-hand side of \eqref{eq:Lindeberg1} equals
\begin{align*}
	h^{-d} &\int
			\bigl\langle\Gvec(h^{-1}(\y-\x)),\Hvec\bigr\rangle^2 f(\y) \, d\y
		- \Bigl( h^{-d/2} \int
			\bigl\langle\Gvec(h^{-1}(\y-\x)),\Hvec\bigr\rangle f(\y) \, d\y \Bigr)^2 \\
	&= \ \int \langle\Gvec(\z),\Hvec\rangle^2 f(\x + h\z) \, d\z
		- h^d \Bigl( \int \langle\Gvec(\z),\Hvec\rangle f(\x + h\z) \, d\z \Bigr)^2 .
\end{align*}
Note that the first integrand on the right-hand side conveges pointwise to $f(\x_o) \langle \Gvec(\z), \Hvec\rangle^2$ and is bounded by the integrable function $\|f\|_\infty \langle \Gvec(\z), \Hvec\rangle^2$. The second integrand converges pointwise to $f(\x_o) \langle \Gvec(\z), \Hvec\rangle$ and is bounded by the integrable function $\|f\|_\infty \bigl| \langle \Gvec(\z), \Hvec\rangle \bigr|$. Hence, \eqref{eq:Lindeberg1} follows from dominated convergence. Analogous arguments apply to \eqref{eq:Lindeberg2}: The left-hand side equals
\begin{align*}
	h^{-d} &\int
			\bigl\langle\Gvec(h^{-1}(\y-\x)),\Hvec\bigr\rangle^2
			\min \bigl\{ (nh^d)^{-1/2}
				\bigl|\bigl\langle\Gvec(h^{-1}(\y-\x)),\Hvec\bigr\rangle\bigr|,
					1 \bigr\} f(\y) \, d\y \\
	&= \ \int \langle\Gvec(\z),\Hvec\rangle^2
		\min \bigl\{ (nh^d)^{-1/2} \bigl| \langle\Gvec(\z),\Hvec\rangle \bigr|,
			1 \bigr\} f(\x + h\z) \, d\z .
\end{align*}
This converges to $0$, because the integrand converges pointwise to $0$ and is bounded by the integrable function $\|f\|_\infty \langle \Gvec(\z), \Hvec\rangle^2$.
\end{proof}

\begin{proof}[\bf Proof of Theorem~\ref{thm:from.f.to.ell}]
Note that $\bigl( \ell(\x), h D\ell(\x), h^2 D^2\ell(\x) \bigr) = \bs{F} \bigl( f(\x), h Df(\x), h^2 D^2f(\x) \bigr)$, where $\bs{F} : \Hds \to \Hds$ is given by
\[
	\bs{F}(c,\b,\A) \ := \ \bigl( \log c, c^{-1}\b, c^{-1} \A - c^{-2} \b\b^\top \bigr)
	\quad\text{if} \ c > 0 .
\]
An analogous representation holds true for the moment matching estimators. Note also that $\bs{F}$ is continuously differentiable with Jacobian operator given by
\begin{align}
\nonumber
	D\bs{F}(c,\b,\A) (\beta,\bbeta,\B) \
	\defeq \ &\lim_{t \to 0} \, t^{-1}
		\bigl( \bs{F}(c+t\beta, \b + t\bbeta, \A + t\B)
			- \bs{F}(c,\b,\A) \bigr) \\
\label{eq:DF}
	= \ & c_{}^{-1} \Bigl( \beta , \,
		\bbeta - \beta \b_c,
		\B - \beta \A_c
			- \b_c^{} \bbeta^\top + \bbeta \b_c^\top
			+ 2 \beta \b_c^{} \b_c^\top \Bigr)
\end{align}
with $\b_c \defeq c^{-1}\b$, $\A_c \defeq c^{-1} \A$. But
\[
	\bigl( \hat{f}(\x), h \hat{Df}(\x), h^2 \hat{D^2 f}(\x) \bigr)
	\ = \ \bigl( f(\x), h Df(\x), h^2 D^2 f(\x) \bigr)
		+ \Bvec_h^{}(\x) + \Yvecnhx ,
\]
where $\Bvec_h^{}(\x) = \bigl( O(h^{\gamma(0)}), O(h^{\gamma(1)}), O(h^4) \bigr)$ and $\Yvecnhx = O_p \bigl( (nh^d)^{-1/2} \bigr)$, so
\[
	\bigl( f(\x), h Df(\x), h^2 D^2 f(\x) \bigr) + \Bvec_h(\x)
	\ \to \ \bigl( f(\x_o), \bs{0}, \bs{0} \bigr) .
\]
Note also that
\begin{align*}
	D\bs{F}(f(\x_o),\bs{0},\bs{0}) (\beta,\bbeta,\B) \
	= \ & f(\x_o)^{-1} \bigl( \beta, \bbeta , \B \bigr) .
\end{align*}
Consequently, by a suitable version of the $\delta$-method, see Lemma~\ref{lem:delta-method}, this implies that
\begin{align*}
	\bigl( \hat{\ell}_{n,h}(\x),
		&h \hat{D\ell}_{n,h}(\x), h^2 \hat{D^2\ell}_{n,h}(\x) \bigr) \\
	= \ &\bs{F} \bigl( \bigl( f(\x), h Df(\x), h^2 D^2f(\x) \bigr)
			+ \Bvec_h^{}(\x) \bigr)
		+ f(\x_o)^{-1} \Yvecnhx + o_p \bigl( (nh^d)^{-1/2} \bigr) \\
	= \ &\bs{F} \bigl( \bigl( f(\x), h Df(\x), h^2 D^2f(\x) \bigr)
			+ \Bvec_h^{}(\x) \bigr) \\
		&+ \ \frac{1}{nh^d} \sum_{i=1}^n f(\x_o)^{-1}
			\bigl[ \Gvec(h^{-1}(\X_i - \x)) - \Ex \Gvec(h^{-1}(\X_1 - \x)) \bigr]
		+ o_p \bigl( (nh^d)^{-1/2} \bigr) .
\end{align*}
It remains to analyze the bias 
\begin{align}
\nonumber
	\Bvec_h^{\mathtt{new}}(\x) \
	&= \ \bs{F} \bigl( \bigl( f(\x), h Df(\x), h^2 D^2f(\x) \bigr)
			+ \Bvec_h^{}(\x) \bigr)
		- \bigl( \ell(\x), h D\ell(\x), h^2 D^2\ell(\x) \bigr) \\
\nonumber
	&= \ \bs{F} \bigl( \bigl( f(\x), h Df(\x), h^2 D^2f(\x) \bigr)
			+ \Bvec_h^{}(\x) \bigr)
		- \bs{F} \bigl( f(\x), h Df(\x), h^2 D^2f(\x) \bigr) \\
\label{eq:BiasF}
	&= \ \int_0^1 D\bs{F} \bigl( f(\x), h Df(\x), h^2 D^2f(\x) + t\Bvec_h(\x) \bigr)
		\Bvec_h(\x) \, dt .
\end{align}
Now we plug-in the explicit formula \eqref{eq:DF} with $(\beta,\bbeta,\B) = \Bvec_h^{}(\x)$, that means,
\begin{align*}
	\beta \  &= \ h^{\gamma(0)} \bigl( \beta(\x_o) + o(1) \bigr) , \\
	\bbeta \ &= \ h^{\gamma(1)} \bigl( \bs{\beta}(\x_o) + o(1) \bigr) , \\
	\B \     &= \ h^4 \bigl( \B(\x_o) + o(1) \bigr) ,
\end{align*}
and with $(c,\b,\A) = \bigl( f(\x), h Df(\x), h^2 D^2f(\x) + t\Bvec_h(\x) \bigr)$. Note that uniformly in $t \in [0,1]$,
\begin{align*}
	c \  &= \ f(\x_o) + o(1) , \\
	\b \ &= \ h \bigl( Df(\x_o) + o(1) \bigr) , \\
	\A \ &= \ h^2 \bigl( D^2f(\x_o) + o(1) \bigr) \B(\x_o) + o(1) \bigr) ,
\end{align*}
and thus
\[
	\b_c \ = \ h \bigl( D\ell(\x_o) + o(1) \bigr)
	\quad\text{and}\quad
	\A_c \ = \ h^2 \bigl( D^2\ell(\x_o) + D\ell(\x_o) D\ell(\x_o)^\top + o(1) \bigr) .
\]
Thus, elementary calculations show that the integrand of \eqref{eq:BiasF} equals
\begin{align*}
	f(\x_o)^{-1} \Bigl(
		&h^{\gamma(0)} \beta(\x_o)
			+ o(h^{\gamma(0)}) , \\
		&h^{\gamma(1)} \bs{\beta}(\x_o)
			- h^{\gamma(0)+1} \beta(\x_o) D\ell(\x_o)
			+ o(h^{\min\{\gamma(1),\gamma(0)+1\}}) , \\
		&h^4 \B(\x_o)
			- h^{\gamma(0)+2} \beta(\x_o)
				\bigl( D^2\ell(\x_o) - D\ell(\x_o) D\ell(\x_o)^\top \bigr) \\
		& \qquad - \ h^{\gamma(1) + 1}
				\bigl( \bs{\beta}(\x_o) D\ell(\x_o)^\top
					+ D\ell(\x_o)\bs{\beta}(\x_o)^\top \bigr)
			+ o(h^4)
		\Bigr)
\end{align*}
uniformly in $t \in [0,1]$. From this one can easily deduce the asserted representation of $\Bvec_h^{\mathtt{new}}(\x)$.
\end{proof}

\subsection{Different bandwiths}
\label{app:different.bandwidths}

Asymptotic normality of
\[
	\Bigl( (nh(0)^d)^{d/2} y_{n,h}^{}(\x),
		(nh(1)^d)^{d/2} \y_{n,h}^{}(\x),
		(nh(2)^d)^{d/2} \Y_{\!\!n,h}^{}(\x) \Bigr)
\]
follows from the multivariate CLT with similar arguments as in the previous Section~\ref{app:LE}. To verify that its three components are asymptotically stochastically independent, it suffices to show that for $0 \le j < k \le 2$,
\[
	\Cov \Bigl( h(j)^{-d/2} G_j \bigl( h(j)^{-1}(\X_1 - \x) \bigr),
		h(k)^{-d/2} G_k \bigl( h(k)^{-1}(\X_1 - \x) \bigr) \Bigr)
	\to 0 ,
\]
where
\[
	G_0(\z) \defeq g(\z), \quad
	G_1(\z) \defeq \b^\top \bs{g}(\z)
	\quad\text{and}\quad
	G_2(\z) \defeq \tr(\A \G(\z))
\]
for arbitrary fixed $\b \in \R^d$ and $\A \in \R^{d\times d}_{\mathtt{sym}}$.

On the one hand, for $i=0,1,2$,
\[
	h(i)^{-d/2} \Ex G_i \bigl( h(i)^{-1}(\X_1 - \x) \bigr)
	= h(i)^{d/2} \int G_i(\z) f(\x + h(i)\z) \, d\z
	\to 0 ,
\]
because $h(i) \to 0$, $G_i \in \LL^1(\R^d)$ and $\|f\|_\infty < \infty$. On the other hand,
\begin{align*}
	h(j)^{-d/2} &h(k)^{-d/2}
		\Ex \Bigl( G_j \bigl( h(j)^{-1}(\X_1 - \x) \bigr)
			G_k \bigl( h(k)^{-1}(\X_1 - \x) \bigr) \\
	&= \bigl( h(j)/h(k) \bigr)^{d/2}
		\int G_j(\z) G_k \bigl( (h(j)/h(k)) \bigr) f(\x + h(j)\z) \, d\z
		\ \to 0 ,
\end{align*}
because $G_k$ is bounded.

\subsection{Local Scoring Rules}
\label{app:LS}

\begin{proof}[\textnormal{\textbf{Proof of Theorem \ref{thm:Holzmann1}}}]
  Let $w \in \cW$ be a weight function. For
  $h, g \in \GG$ with $h = g$ on $\{w > 0\}$ either it holds that
  $h,g \in \GG_w$ with $h_w = g_w$ and therefore
  $\tilde{S}(h,\y, w) = \tilde{S}(g, \y, w)$ on $\R^d$, or
  $h,g \in \GG \, \backslash \, \GG_w$ and both scores
  are infinite by definition. Thus, $\tilde{S}$ is a localizing
  weighted scoring rule.

  Let be $p \in \cP$, such that there exists $g \in \GG$
  with $g = p$ on $\{w > 0\}$. For $h \in \GG_w$ we have
  \begin{multline} \label{eq:thm1}
    \tilde{S}(g,p,w) = \int w(\y) S(g_w, \y) p(\y) \, d\y
     = \int S(p_w, \y) p_w(\y) \, d\y \int w(\z) p(\z) \, d\z\\
     \leq \int S(h_w, \y) p_w(\y) \, d\y \int w(\z) p(\z) \, d\z
     = \int w(\y) S(h_w, \y) p(\y) \, d\y = \tilde{S}(h,p,w)
   \end{multline}
   by propriety of $S$ and $g_w = p_w$. For $h \in \GG \, \backslash \,
   \GG_w$ we have
   \begin{equation}\label{eq:thm1b}
     \tilde{S}(g,p,w) \leq \tilde{S}(h,p,w) = \infty
   \end{equation}
   by definition of $\tilde{S}$.  Equation \eqref{eq:thm1} and
   \eqref{eq:thm1b} show that $\tilde{S}$ is locally proper if $S$ is
   proper. Further, if $S$ is strictly proper and
   $g \in \GG$, $p \in \cP$ such that $g \propto p$ on
   $\{w > 0\}$, then is $g_w = p_w$. By Equation \eqref{eq:thm1}
   and \eqref{eq:thm1b} we obtain
   \begin{displaymath}
     \tilde{S}(g,p, w) \leq \tilde{S}(h,p,w) \quad \text{for all} \quad h
     \in \GG
   \end{displaymath}
   with equality if, and only if, $g_w = p_w
   = h_w$ or equivalently $g \propto p \propto h$ on $\{w
   > 0\}$. Therefore, $\tilde{S}$ is proportionally locally proper. The
   statement in the remark holds, because for each $w \in \cW$ we
   use the strict propriety with respect to $\tilde{\GG}_w
   \subset \cP$.
\end{proof}

\begin{proof}[\textnormal{\textbf{Proof of Theorem \ref{thm:Holzmann2}}}]
  The weighted scoring rule $S_Q$ depends on $h \in \GG$ only
  through $\int w(\z) h(\z) \, d\z$, whence $S_Q$ is localizing. To
see that $S_Q$ is proper, let $w \in \cW$ and $p \in
\cP$, such that there exists $g \in \GG$ with $g = p$ on $\{w
> 0\}$. 
For $h \in \GG_w$, we obtain
  \begin{align}
    \frac{S_Q(g, p, w)}{m_w}
    & =  Q\left( \frac{\int w(\y) g(\y) \, d\y}{m_w}, 1
      \right) \frac{\int w(\y) p(\y) \, d\y }{m_w}+ Q \left( \frac{\int w(\y)  g(\y) \,
      d\y}{m_w}, 0\right) \left( 1 - \frac{\int w(\y) p(\y) \, d\y}{m_w}
      \right)\nonumber \\
    &= Q\left( \frac{\int w(\y) g(\y) \, d\y}{m_w}, \frac{\int w(\y) g(\y) \, d\y}{m_w}
      \right)\nonumber \\
    &= Q\left( \frac{\int w(\y) p(\y) \, d\y}{m_w}, \frac{\int w(\y) p(\y) \, d\y}{m_w}
      \right)\nonumber \\
    &\leq Q \left( \frac{\int w(\y) h(\y) \, d\y}{m_w}, \frac{\int w(\y) p(\y) \, d\y}{m_w}
      \right) \label{eq:thm2} \\
    &= Q\left( \frac{\int w(\y) h(\y) \, d\y}{m_w}, 1
      \right) \frac{\int w(\y) p(\y) \, d\y}{m_w} + Q \left( \frac{\int w(\y)  h(\y) \,
      d\y}{m_w}, 0\right) \left( 1 - \frac{\int w(\y) p(\y) \, d\y}{m_w}
      \right) \nonumber \\
    & = \frac{S_Q(h,p,w)}{m_w}. \nonumber
  \end{align}
  For $h \in \GG \backslash \GG_w$ we have $S_Q(g,p,w) <
S_Q(h,q,w) = \infty$.

As a sum of two localizing weighted scoring rules is $\check{S}$ a
localizing weighted scoring rule, too. For each $w \in \cW$,
let be $p \in \cP$, such that there exists $g \in \GG$ with
$g = p$ on $\{w > 0\}$. By the propriety of $S_Q$ and $\tilde{S}$ we have
\begin{displaymath}
  \check{S}(g,p,w) = S_Q(g,p,w) + \tilde{S}(g,p,w) \leq S_Q(h,p, w) +
  \tilde{S}(h,p,w) = \check{S}(h,p,w) \quad \text{for all} \quad h \in \GG.
\end{displaymath}
To see that $\check{S}$ is strictly locally proper, suppose that the
above inequality is indeed an equality. The propriety of $S_Q$ and the
proportional propriety of $\tilde{S}$
imply, that $\tilde{S}(g,p,w) = \tilde{S}(h,p,w)$ and
$S_Q(g,p,w) = S_Q(h,p,w)$. The first identity implies
$h \propto g \propto p$ on $\{w > 0\}$, by the proportional propriety
of $\tilde{S}$. The second identity implies
\begin{displaymath}
  \int w(\z) g(\z) \, d\z = \int w(\z) p(\z) \, d\z = \int w(\z) h(\z) \, d\z,
\end{displaymath}
by the strict propriety of $Q$ and Equation \eqref{eq:thm2}. Both
statements together imply $g = p = h$ on $\{w > 0\}$.
\end{proof}

\subsection{Auxiliary Results}
\label{sec:auxiliary-results}

In this section we collect some results we refer to in the proofs. Most of them are well-known. Unless stated otherwise, asymptotic statements refer to $n \to \infty$.

\begin{thm}[Lindeberg's Multivariate Central Limit Theorem]
\label{thm:Lindeberg}
For $n \in \N$, let $\Y_{\!\!n1}, \Y_{\!\!n2}, \ldots, \Y_{\!\!nn} \in \R^d$ be independent random vectors such that $\Ex(\lVert\Y_{\!\!ni}\rVert^2) < \infty$ for $1 \le i \le n$. Suppose that
\begin{displaymath}
	\bSigma_n \defeq \sum_{i=1}^n \Var(\Y_{\!\!ni})
	\ \rightarrow \ \bSigma
\end{displaymath}
and
\begin{displaymath}
	\sum_{i=1}^n
		\Ex \bigl( \lVert \Y_{\!\!ni} \rVert^2 \min\{\lVert\Y_{\!\!ni}\rVert, 1\} \bigr)
	\ \rightarrow \ 0.
\end{displaymath}
Then, with $\bmu_{ni} := \Ex \Y_{\!\!ni}$,
\begin{displaymath}
	\sum_{i=1}^n (\Y_{\!\!ni} - \bmu_{ni})
	\ \rd \ \cN_d(\bs{0}, \bSigma) .
\end{displaymath}
Furthermore,
\begin{displaymath}
	\Ex \, \Bigl\lVert \sum_{i=1}^n
		(\Y_{\!\!ni} - \bmu_{ni})(\Y_{\!\!ni}-\bmu_{ni})^\top
		- \bSigma_n\Bigr\rVert_F
	\ \to \ 0
	\quad\text{and}\quad
	\Ex \Bigl( \max_{1 \le i \le n} \lVert \Y_{\!\!ni} - \bmu_{ni}\rVert^2 \Bigr)
	\ \to \ 0.
\end{displaymath}
\end{thm}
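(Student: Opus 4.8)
The plan is to reduce the distributional convergence to the classical one-dimensional Lindeberg--Feller theorem via the Cram\'er--Wold device, and to deal with the two moment statements by a truncation argument that ties the hypothesis $\sum_{i=1}^{n}\Ex\bigl(\lVert\Y_{\!\!ni}\rVert^{2}\min\{\lVert\Y_{\!\!ni}\rVert,1\}\bigr)\to0$ to the usual Lindeberg condition. First I would record two elementary consequences of this hypothesis. (a) $\max_{1\le i\le n}\lVert\bmu_{ni}\rVert\to0$: splitting $\Ex\lVert\Y_{\!\!ni}\rVert$ at levels $\delta\in(0,1)$ and $1$ and using, on $\{\lVert\Y_{\!\!ni}\rVert>\delta\}$, the elementary bound $\lVert\Y_{\!\!ni}\rVert\le\delta^{-2}\lVert\Y_{\!\!ni}\rVert^{2}\min\{\lVert\Y_{\!\!ni}\rVert,1\}$, one gets $\max_{i}\lVert\bmu_{ni}\rVert\le\delta+(1+\delta^{-2})\sum_{i}\Ex\bigl(\lVert\Y_{\!\!ni}\rVert^{2}\min\{\lVert\Y_{\!\!ni}\rVert,1\}\bigr)$, hence $\limsup_{n}\max_{i}\lVert\bmu_{ni}\rVert\le\delta$ and then $\delta\downarrow0$. (b) The Lindeberg condition for the centred array: for each $\eps>0$, $\sum_{i}\Ex\bigl(\lVert\Y_{\!\!ni}-\bmu_{ni}\rVert^{2}\one\{\lVert\Y_{\!\!ni}-\bmu_{ni}\rVert>\eps\}\bigr)\to0$. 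Indeed, once $\max_{i}\lVert\bmu_{ni}\rVert<\eps/2$ the event in the indicator is contained in $\{\lVert\Y_{\!\!ni}\rVert>\eps/2\}$, and combining $\lVert\Y_{\!\!ni}-\bmu_{ni}\rVert^{2}\le2\lVert\Y_{\!\!ni}\rVert^{2}+2\lVert\bmu_{ni}\rVert^{2}$ with the fact that on $\{\lVert\Y_{\!\!ni}\rVert>c\}$ both $1$ and $\lVert\Y_{\!\!ni}\rVert^{2}$ are bounded by a $c$-dependent multiple of $\lVert\Y_{\!\!ni}\rVert^{2}\min\{\lVert\Y_{\!\!ni}\rVert,1\}$ reduces the sum to the hypothesis together with (a).

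For the central limit theorem itself, I would fix $\bs{\theta}\in\R^{d}$ and apply the scalar Lindeberg--Feller theorem to the array $\{\bs{\theta}^{\top}\Y_{\!\!ni}\}_{i=1}^{n}$: the variances satisfy $\sum_{i}\Var(\bs{\theta}^{\top}\Y_{\!\!ni})=\bs{\theta}^{\top}\bSigma_{n}\bs{\theta}\to\bs{\theta}^{\top}\bSigma\bs{\theta}$, while the Lindeberg condition for this scalar array follows from (b) via $\lvert\bs{\theta}^{\top}(\Y_{\!\!ni}-\bmu_{ni})\rvert\le\lVert\bs{\theta}\rVert\,\lVert\Y_{\!\!ni}-\bmu_{ni}\rVert$ (apply (b) with $\eps/\lVert\bs{\theta}\rVert$ in place of $\eps$ when $\bs{\theta}\ne\bs{0}$, the case $\bs{\theta}=\bs{0}$ being trivial). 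Hence $\sum_{i}\bs{\theta}^{\top}(\Y_{\!\!ni}-\bmu_{ni})\rd\cN(0,\bs{\theta}^{\top}\bSigma\bs{\theta})$ for every $\bs{\theta}$, and the Cram\'er--Wold device yields $\sum_{i}(\Y_{\!\!ni}-\bmu_{ni})\rd\cN_{d}(\bs{0},\bSigma)$.

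For the second displayed conclusion, set $\bs{V}_{ni}:=\Y_{\!\!ni}-\bmu_{ni}$, so $\bSigma_{n}=\sum_{i}\Ex\bigl(\bs{V}_{ni}\bs{V}_{ni}^{\top}\bigr)$, and truncate at a small level $\eps>0$: with $\tilde{\bs{V}}_{ni}:=\bs{V}_{ni}\one\{\lVert\bs{V}_{ni}\rVert\le\eps\}$, the expected Frobenius norm of the difference between $\sum_{i}\bs{V}_{ni}\bs{V}_{ni}^{\top}-\bSigma_{n}$ and $\sum_{i}\bigl(\tilde{\bs{V}}_{ni}\tilde{\bs{V}}_{ni}^{\top}-\Ex\tilde{\bs{V}}_{ni}\tilde{\bs{V}}_{ni}^{\top}\bigr)$ is at most $2\sum_{i}\Ex\bigl(\lVert\bs{V}_{ni}\rVert^{2}\one\{\lVert\bs{V}_{ni}\rVert>\eps\}\bigr)$, which tends to $0$ by (b), whereas the truncated part consists of independent centred matrices with $\Ex\lVert\tilde{\bs{V}}_{ni}\tilde{\bs{V}}_{ni}^{\top}-\Ex\tilde{\bs{V}}_{ni}\tilde{\bs{V}}_{ni}^{\top}\rVert_{F}^{2}\le\Ex\lVert\tilde{\bs{V}}_{ni}\rVert^{4}\le\eps^{2}\Ex\lVert\bs{V}_{ni}\rVert^{2}$, so by independence and Cauchy--Schwarz its expected Frobenius norm is at most $\eps\,(\tr\bSigma_{n})^{1/2}$, which stays bounded. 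Taking $\limsup_{n}$ and then $\eps\downarrow0$ gives $\Ex\lVert\sum_{i}\bs{V}_{ni}\bs{V}_{ni}^{\top}-\bSigma_{n}\rVert_{F}\to0$. Finally, from $\max_{i}\lVert\bs{V}_{ni}\rVert^{2}\le\eps^{2}+\sum_{i}\lVert\bs{V}_{ni}\rVert^{2}\one\{\lVert\bs{V}_{ni}\rVert>\eps\}$ one obtains, after taking expectations, invoking (b), and letting $\eps\downarrow0$, that $\Ex\bigl(\max_{i}\lVert\bs{V}_{ni}\rVert^{2}\bigr)\to0$. The step I expect to be the main obstacle is the covariance statement: because only second moments of the $\Y_{\!\!ni}$ are assumed, the Frobenius norm of $\sum_{i}\bs{V}_{ni}\bs{V}_{ni}^{\top}$ may have infinite variance, so a direct Chebyshev bound is unavailable and one must truncate at a level $\eps$ and let it vanish only afterwards --- this, together with carrying (a) and (b) through cleanly, is the one part of the argument that is not entirely routine.
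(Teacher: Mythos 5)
Your proof is correct, but it takes a genuinely different route from the paper. The paper's argument is a one-step reduction: it simply notes that the theorem is known when $\bmu_{ni}=\bs{0}$ (citing an external reference for all three conclusions) and then invokes Lemma~\ref{lem:Lindeberg.condition} to transfer the modified Lindeberg-type hypothesis $\sum_i\Ex\bigl(\lVert\Y_{\!\!ni}\rVert^2\min\{\lVert\Y_{\!\!ni}\rVert,1\}\bigr)\to 0$ from the raw array to the centred array $\Y_{\!\!ni}-\bmu_{ni}$, by showing $\Ex\bigl(\lVert\Y-\bmu\rVert^2\min\{\lVert\Y-\bmu\rVert,1\}\bigr)\le 16\Ex\bigl(\lVert\Y\rVert^2\min\{\lVert\Y\rVert,1\}\bigr)$ via convexity of $2\lVert\y\rVert^3/(1+\lVert\y\rVert)$ and a Jensen/symmetrization trick with an independent copy. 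You instead prove the theorem essentially from scratch: you deduce $\max_i\lVert\bmu_{ni}\rVert\to0$, verify the classical Lindeberg condition for the centred array, apply Cram\'er--Wold with the scalar Lindeberg--Feller theorem for the distributional limit, and handle the two moment conclusions by a truncation-at-level-$\eps$ argument (with the independence/Cauchy--Schwarz bound for the truncated second-moment array, and the crude $\max\le\eps^2+\sum$ bound for the maximum). Both arguments are valid: the paper's is shorter and cleaner because it pushes all the substance into a citation plus one centering lemma, whereas yours is self-contained for the covariance and maximum statements and makes explicit the (standard, equivalent) relationship between the $\min\{\cdot,1\}$-type and indicator-type Lindeberg conditions. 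One small remark: your step (a) correctly controls $\max_i\lVert\bmu_{ni}\rVert$ via a $\delta$-split, and the $\delta^{-2}$-factor is what you need on $\{\delta<\lVert\Y_{\!\!ni}\rVert\le1\}$ and on $\{\lVert\Y_{\!\!ni}\rVert>1\}$; that reasoning, together with the $c$-dependent constants in (b), is sound even though the constants degrade as $\eps\downarrow 0$, since for fixed $\eps$ the quantities tend to zero as $n\to\infty$.
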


\begin{proof}
The theorem is well-known in case of $\bmu_{ni} = \bs{0}$ for $1 \le i \le n$; see for instance \citet[Appendix]{DuembgenLinMod2019}. Thus it suffices to show that the conditions stated here imply that
\[
	\sum_{i=1}^n \Ex\bigl( \lVert \Y_{\!\!ni} - \bmu_{ni} \rVert^2
		\min\{\lVert\Y_{ni} - \bmu_{ni}\rVert, 1\} \bigr)
	\rightarrow 0.	
\]
But this is a direct consequence of the next lemma.
\end{proof}

\begin{lem}
\label{lem:Lindeberg.condition}
Let $\Y,\Z \in \R^d$ be random vectors such that $\Ex(\lVert\Y\rVert^2), \Ex(\lVert\Z\rVert^2) < \infty$. Then,
\[
	\Ex \bigl( \lVert\Y+\Z\rVert^2 \min\{\lVert\Y+\Z\rVert,1\} \bigr)
	\ \le \ 8 \Ex \bigl( \lVert\Y\rVert^2 \min\{\lVert\Y\rVert,1\} \bigr)
		+ 8 \Ex \bigl( \lVert\Z\rVert^2 \min\{\lVert\Z\rVert,1\} \bigr)
\]
and, with $\bmu := \Ex\Y$,
\[
	\Ex \bigl( \lVert\Y-\bmu\rVert^2 \min\{\lVert\Y-\bmu\rVert,1\} \bigr)
	\ \le \ 16 \Ex \bigl( \lVert\Y\rVert^2 \min\{\lVert\Y\rVert,1\} \bigr) .
\]
\end{lem}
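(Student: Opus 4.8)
Throughout write $\psi(\z)\defeq\|\z\|^2\min\{\|\z\|,1\}=\min\{\|\z\|^2,\|\z\|^3\}$, so that both inequalities ask for bounds on $\Ex\psi(\cdot)$. The plan for each is to first establish a deterministic pointwise estimate on $\R^d$ and then integrate; note that $\psi(\z)\le\|\z\|^2$ makes all the expectations finite.

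For the first inequality, fix $\y,\z\in\R^d$ and assume, without loss of generality (the right-hand side being symmetric in the two arguments), that $\|\z\|\le\|\y\|$, so $\|\y+\z\|\le\|\y\|+\|\z\|\le2\|\y\|$. Hence $\|\y+\z\|^2\le4\|\y\|^2$ and, using $\min\{2t,1\}\le2\min\{t,1\}$ for $t\ge0$, also $\min\{\|\y+\z\|,1\}\le\min\{2\|\y\|,1\}\le2\min\{\|\y\|,1\}$. Multiplying gives $\psi(\y+\z)\le8\,\psi(\y)\le8\,\psi(\y)+8\,\psi(\z)$; applying this with $\y=\Y$, $\z=\Z$ and taking expectations yields the claim.

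For the second inequality the natural first step is to apply the first one with the deterministic $\Z\defeq-\bmu$, which gives $\Ex\psi(\Y-\bmu)\le8\,\Ex\psi(\Y)+8\,\psi(\bmu)$; everything then reduces to the estimate $\psi(\bmu)\le\Ex\psi(\Y)$. This is the heart of the matter, and it is genuinely nontrivial because $\z\mapsto\psi(\z)$ is \emph{not} convex — its radial profile $t\mapsto\min\{t^2,t^3\}$ has a downward jump in its derivative at $t=1$ — so one cannot simply invoke Jensen's inequality at $\bmu=\Ex\Y$. I would split according to the size of $\sigma^2\defeq\Ex\|\Y\|^2$. If $\sigma^2\ge\tfrac12$, I avoid the first inequality altogether: from $\psi(\z)\le\|\z\|^2$ one gets $\Ex\psi(\Y-\bmu)\le\Ex\|\Y-\bmu\|^2=\sigma^2-\|\bmu\|^2\le\sigma^2$, while the elementary bound $\psi(\z)\ge\tfrac12\|\z\|^2$ for $\|\z\|\ge\tfrac12$ gives $\Ex\psi(\Y)\ge\tfrac12\,\Ex\bigl[\|\Y\|^2\one\{\|\Y\|\ge\tfrac12\}\bigr]\ge\tfrac12\bigl(\sigma^2-\tfrac14\bigr)\ge\tfrac14\sigma^2$, and the two combine. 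If $\sigma^2<\tfrac12$, then $\|\bmu\|\le\sigma<1$, so $\psi(\bmu)=\|\bmu\|^3$; bounding $\|\bmu\|\le\Ex\|\Y\|$, decomposing $\|\Y\|=\|\Y\|\one\{\|\Y\|\le1\}+\|\Y\|\one\{\|\Y\|>1\}$, applying Jensen for the convex map $t\mapsto t^3$ to the truncated part (which equals the $\{\|\Y\|\le1\}$-part of $\Ex\psi(\Y)$), and using $\|\Y\|\le\|\Y\|^2$ on $\{\|\Y\|>1\}$ (where $\psi(\Y)=\|\Y\|^2$) controls $\|\bmu\|^3$ by a multiple of $\Ex\psi(\Y)$, which together with the term $8\,\Ex\psi(\Y)$ completes the argument.

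The main obstacle is exactly this bound on $\psi(\bmu)$ in the concentrated regime where $\sigma^2$ is small: Jensen is unavailable, and the point is that truncating at level one turns $\psi$ into a genuinely convex function on each of the two pieces, so that the contribution of the ``bulk'' of $\Y$ is handled by Jensen for $t\mapsto t^3$ and the contribution of the ``tail'' by the crude bound $\|\Y\|\le\|\Y\|^2$. Keeping the multiplicative constants under control across the two regimes so as to land at the stated factor is the only delicate point; for the application in Theorem~\ref{thm:Lindeberg} merely the finiteness of the constant is used.
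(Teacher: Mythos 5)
Your proof of the first inequality is correct, and in fact via a nicer route than the paper's: you establish the \emph{pointwise} bound $\psi(\y+\z)\le 8\psi(\y)+8\psi(\z)$ (taking $\|\z\|\le\|\y\|$ w.l.o.g., so that $\|\y+\z\|\le 2\|\y\|$ yields $\|\y+\z\|^2\le 4\|\y\|^2$ and $\min\{\|\y+\z\|,1\}\le 2\min\{\|\y\|,1\}$), and then integrate. The paper instead replaces $\psi$ by the \emph{convex} majorant $\tilde\psi(\z):=2\|\z\|^3/(1+\|\z\|)$, uses $\tilde\psi(\y+\z)\le 2^{-1}\tilde\psi(2\y)+2^{-1}\tilde\psi(2\z)$, and finally $\tilde\psi(2\z)\le 16\psi(\z)$. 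That argument yields only the expectation bound, not a pointwise one — but the point of doing it this way is that it leaves the convex function $\tilde\psi$ available for reuse.

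For the second inequality there is a genuine gap, which you yourself flag. Applying the first part with the deterministic $\Z=-\bmu$ reduces the problem to bounding $\psi(\bmu)$ by $\Ex\psi(\Y)$; you correctly note that Jensen fails because $\psi$ is nonconvex, but your substitute in the regime $\sigma^2<2^{-1}$ only delivers, with $a:=\Ex[\|\Y\|\one\{\|\Y\|\le1\}]$ and $b:=\Ex[\|\Y\|\one\{\|\Y\|>1\}]$, the estimate $\psi(\bmu)=\|\bmu\|^3\le(a+b)^3\le 4a^3+4b^3\le 8\Ex\psi(\Y)$ (using $a^3\le\Ex[\|\Y\|^3\one\{\|\Y\|\le1\}]$ by conditional Jensen on $\{\|\Y\|\le 1\}$, and $b^3\le b\le\Ex[\|\Y\|^2\one\{\|\Y\|>1\}]$ since $b\le\sigma^2<1$). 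This gives $\Ex\psi(\Y-\bmu)\le 8\Ex\psi(\Y)+8\cdot 8\,\Ex\psi(\Y)=72\,\Ex\psi(\Y)$, not $16\,\Ex\psi(\Y)$. You concede that ``keeping the multiplicative constants under control \dots\ is the only delicate point'', but since the lemma asserts the constant $16$, that delicate point \emph{is} the second claim, and your argument does not prove it.

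The paper's method is worth internalizing because it sidesteps the issue entirely via symmetrization. With $\Y'$ an independent copy of $\Y$, write $\Y-\bmu=\Ex(\Y-\Y'\mid\Y)$; convexity of $\tilde\psi$ and conditional Jensen give
\[
\Ex\psi(\Y-\bmu)\ \le\ \Ex\tilde\psi(\Y-\bmu)\ \le\ \Ex\tilde\psi(\Y-\Y') ,
\]
and the first inequality in its $\tilde\psi$-form, $\Ex\tilde\psi(\Y+\Z)\le 8\Ex\psi(\Y)+8\Ex\psi(\Z)$, bounds the right-hand side by $16\Ex\psi(\Y)$. Your pointwise proof of part one, while cleaner in isolation, throws away the convex majorant that makes this symmetrization step possible — which is exactly why you got stuck at part two.
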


\begin{proof}
On can easily verify that $h(\y) := \lVert\y\rVert^2 \min\{\lVert\y\rVert,1\} \le \tilde{h}(\y) := 2 \lVert\y\rVert^3/(1 + \lVert\y\rVert)$ for arbitrary $\y \in \R^d$. Since $2x^2/(1 + x)$ is increasing and convex in $x \ge 0$, and since $\y \mapsto \lVert\y\rVert$ ist convex, the function $\tilde{h}$ is convex. Consequently,
\begin{align*}
	\Ex h(\Y + \Z) \
	&\le \ \Ex \tilde{h}(\Y + \Z) \\
	&= \ \Ex \tilde{h}(2^{-1}(2\Y + 2\Z)) \\
	&\le \ 2^{-1} \Ex \tilde{h}(2\Y) + 2^{-1} \Ex \tilde{h}(2\Z) \\
	&= \ 8 \Ex \Bigl( \frac{\lVert\Y\rVert^3}{1 + 2 \lVert\Y\rVert} \Bigr)
		+ 8 \Ex \Bigl( \frac{\lVert\Z\rVert^3}{1 + 2 \lVert\Z\rVert} \Bigr) \\
	&\le \ 8 \Ex \bigl( \lVert\Y\rVert^2 \min\{\lVert\Y\rVert,1\} \bigr)
		+ 8 \Ex \bigl( \lVert\Z\rVert^2 \min\{\lVert\Z\rVert,1\} \bigr) ,
\end{align*}
because $x^3/(1 + 2x) \le x^2\min\{x,1/2\} \le x^2 \min\{x,1\}$ for $x \ge 0$.

In the special case of $\Z = -\Y'$ with an independent copy $\Y'$ of $\Y$, it follows from Jensen's inequality that
\[
	\Ex h(\Y - \bmu)
	\ \le \ \Ex \tilde{h}(\Y - \bmu)
	\ = \ \Ex \tilde{h}(\Ex(\Y - \Y' \,|\, \Y))
	\ \le \ \Ex \Ex \bigl( \tilde{h}(\Y - \Y') \,|\, \Y)
	\ = \ \Ex \tilde{h}(\Y - \Y') ,
\]
and the right hand side is not larger than $16 \Ex \bigl( \lVert\Y\rVert^2 \min\{\lVert\Y\rVert,1\} \bigr)$.
\end{proof}

In connection with linear expansions of our estimatiors we need a variant of the $\delta$-method, see \citet[Theorem 3.8]{Vaart1998}.

\begin{lem}
\label{lem:delta-method}
Let $\bs{F} : \R^p \to \R^q$ such that $\F \in \cC^1(U,\R^q)$ for some open neighborhood $U$ of a point $\x_o \in \R^p$. Consider fixed points $\x_n \in \R^d$ such that $\x_n \to \x_o$ and random vectors $\Y_{\!\!n} \in \R^p$ such that $\Y_{\!\!n} = O_p(r_n)$, where $0 < r_n \to 0$. Then
\[
	\bs{F}(\x_n + \Y_{\!\!n}) \ = \ \bs{F}(\x_n) + D\bs{F}(\x_o)\Y_{\!\!n} + o_p(r_n) ,
\]
where $D\bs{F}(\x_o)$ denotes the Jacobian matrix of $\bs{F}$ at $\x_o$. 
\end{lem}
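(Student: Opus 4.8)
The statement to prove (Lemma~\ref{lem:delta-method}, the $\delta$-method variant): for $\bs{F} \in \cC^1(U,\R^q)$ near $\x_o$, deterministic $\x_n \to \x_o$, and random $\Y_{\!\!n} = O_p(r_n)$ with $0 < r_n \to 0$, we have $\bs{F}(\x_n + \Y_{\!\!n}) = \bs{F}(\x_n) + D\bs{F}(\x_o)\Y_{\!\!n} + o_p(r_n)$.

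The plan is to reduce everything to the first-order Taylor expansion of $\bs{F}$ with an explicit remainder, controlled uniformly on a small ball. First I would fix a closed ball $\bar B(\x_o,\rho) \subset U$ on which $\bs{F}$ is $\cC^1$, so that $D\bs{F}$ is continuous, hence uniformly continuous, on $\bar B(\x_o,\rho)$. For any two points $\u,\v$ in this ball, the fundamental theorem of calculus along the segment $[\u,\v]$ (which stays in the ball by convexity) gives
\[
	\bs{F}(\v) - \bs{F}(\u)
	\ = \ \int_0^1 D\bs{F}\bigl(\u + t(\v-\u)\bigr)(\v - \u) \, dt ,
\]
and subtracting $D\bs{F}(\x_o)(\v-\u)$ yields a remainder bounded by $\|\v - \u\| \cdot \omega(\|\u - \x_o\| + \|\v - \u\|)$, where $\omega(\delta) \defeq \sup\{\|D\bs{F}(\y) - D\bs{F}(\x_o)\| : \y \in \bar B(\x_o,\delta)\}$ is the modulus of continuity of $D\bs{F}$ at $\x_o$, which tends to $0$ as $\delta \to 0$. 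Applying this with $\u = \x_n$ and $\v = \x_n + \Y_{\!\!n}$ gives, on the event $\{\x_n + \Y_{\!\!n} \in \bar B(\x_o,\rho)\} \cap \{\|\x_n - \x_o\| \le \rho\}$,
\[
	\bigl\| \bs{F}(\x_n + \Y_{\!\!n}) - \bs{F}(\x_n) - D\bs{F}(\x_o)\Y_{\!\!n} \bigr\|
	\ \le \ \|\Y_{\!\!n}\| \cdot \omega\bigl( \|\x_n - \x_o\| + \|\Y_{\!\!n}\| \bigr) .
\]

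The second step is to turn this deterministic bound into the stochastic $o_p(r_n)$ claim. Since $\Y_{\!\!n} = O_p(r_n)$ and $r_n \to 0$, we have $\Y_{\!\!n} = o_p(1)$; combined with $\x_n \to \x_o$, the argument $\|\x_n - \x_o\| + \|\Y_{\!\!n}\|$ of $\omega$ converges to $0$ in probability, so $\omega(\|\x_n - \x_o\| + \|\Y_{\!\!n}\|) = o_p(1)$ by continuity of $\omega$ at $0$ (and $\omega(0)=0$). Also $\Y_{\!\!n} = o_p(1)$ and $\x_n \to \x_o$ force $\Prob(\x_n + \Y_{\!\!n} \in \bar B(\x_o,\rho), \|\x_n-\x_o\|\le\rho) \to 1$, so the deterministic bound holds with probability tending to one. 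On that event the remainder is at most $\|\Y_{\!\!n}\| \cdot o_p(1) = O_p(r_n) \cdot o_p(1) = o_p(r_n)$, using the standard fact that $O_p(r_n) \cdot o_p(1) = o_p(r_n)$. Off that event, which has vanishing probability, the remainder is whatever it is, and this does not affect the $o_p(r_n)$ conclusion. Assembling the pieces gives the claim.

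There is no real obstacle here; the only point requiring a little care is the bookkeeping between the deterministic inequality, which is valid only on a high-probability event (the one on which the relevant points lie in the fixed ball), and the final $o_p$ statement, which must hold unconditionally — the standard device is to note that an estimate holding with probability $\to 1$ together with a term that is $o_p(r_n)$ on that event yields an unconditional $o_p(r_n)$. The continuity and uniform continuity of $D\bs{F}$ on the compact ball, and the elementary algebra $O_p(r_n)\,o_p(1) = o_p(r_n)$, are the only analytic inputs.
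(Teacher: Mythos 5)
Your proof is correct and takes essentially the same route as the paper's: write the difference $\bs{F}(\x_n+\Y_{\!\!n}) - \bs{F}(\x_n) - D\bs{F}(\x_o)\Y_{\!\!n}$ as the integral $\int_0^1 \bigl(D\bs{F}(\x_n+t\Y_{\!\!n}) - D\bs{F}(\x_o)\bigr)\Y_{\!\!n}\,dt$, bound it by $\|\Y_{\!\!n}\|$ times the oscillation of $D\bs{F}$ on a small ball around $\x_o$, and handle the low-probability event where the segment leaves the domain separately. The only cosmetic difference is in how the oscillation factor is shown to vanish: you keep the radius $\|\x_n-\x_o\|+\|\Y_{\!\!n}\|$ random and invoke the continuous mapping theorem to get $\omega(\cdot)=o_p(1)$, whereas the paper replaces $\|\Y_{\!\!n}\|$ by the deterministic cutoff $r_n^{1/2}$ (valid with probability $\to 1$ since $\Y_{\!\!n}=O_p(r_n)$) so that the oscillation bound $\gamma_n$ is a deterministic null sequence; both bookkeeping choices yield the same $O_p(r_n)\,o_p(1)=o_p(r_n)$ conclusion.
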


\begin{proof}
Let $U_n \defeq \{\y \in \R^p : \|\y - \x_o\| < \eps_n\}$ with $\eps_n \defeq \|\x_n - \x_o\| + r_n^{1/2}$. Since $\eps_n \to 0$, there exists an $n_o \in \N$ such that $U_n \subset U$ for all $n \ge n_o$. In case of $n \ge n_o$ and $\|\Y_{\!\!n}\| < r_n^{1/2}$,
\begin{align*}
	\bs{F}(\x_n + \Y_{\!\!n}) \
	&= \ \bs{F}(\x_n) + \int_0^1 D\bs{F}(\x_n + t\Y_{\!\!n})\Y_{\!\!n} \, dt \\
	&= \ \bs{F}(\x_n) + D\bs{F}(\x_o) \Y_{\!\!n} + \bs{R}_n
\end{align*}
with $\bs{R}_n = \int_0^1 \bigl( D\bs{F}(\x_n + t\Y_{\!\!n}) - D\bs{F}(\x_o) \bigr) \Y_{\!\!n} \, dt$. Hence
\[
	\|\bs{R}_n\|
	\ \le \ \int_0^1 \bigl\| D\bs{F}(\x_n + t\Y_{\!\!n}) - D\bs{F}(\x_o) \bigr\|_F \, dt \,
		\|\Y_{\!\!n}\|
	\ \le \ \gamma_n \|\Y_{\!\!n}\|
\]
where
\[
	\gamma_n
	\ \defeq \ \sup_{\y \in U_n} \, \bigl\| D\bs{F}(\y) - D\bs{F}(\x_o) \bigr\|_F
	\ \to \ 0 .
\]
Since $\Pr(\|\Y_{\!\!n}\| \ge r_n^{1/2}) \to 0$, this implies the assertion of the lemma.
\end{proof}

Finally, we recall some useful facts about moment-generating functions.

\begin{lem}
\label{lem:mgf.etc}
Let $\Theta$ be a convex, open subset of $\R^p$, let $\mu$ be a measure on $\R^d$, and let $T:\R^d \to \R^p$ be a measurable mapping such that
\[
	M(\btheta) \ \defeq \ \int \exp(\btheta^\top T(\z)) \, \mu(d\z) \ < \ \infty
	\quad\text{for all} \ \btheta \in \Theta .
\]
Then $M$ is infinitely often differentiable on $\Theta$ with gradient and Hessian matrix given by
\[
	DM(\btheta) \ = \ \int T(\z) \exp(\btheta^\top T(\z)) \, \mu(d\z)
	\quad\text{and}\quad
	D^2M(\btheta) \ = \ \int T(\z)T(\z)^\top \exp(\btheta^\top T(\z)) \, \mu(d\z) ,
\]
respectively. If $\mu(\{\z \in \R^p : \b^\top T(\z) \ne 0\}) > 0$ for any $\b \in \R^p \setminus \{\bs{0}\}$, then the mapping $\F \defeq DM : \Theta \to \R^p$ is injective, the image $\F(\Theta)$ is an open set, too, and the inverse mapping $\G \defeq \F^{-1} : \F(\Theta) \to \Theta$ is twice continuously differentiable with Jacobian matrix
\[
	D\bs{G}(\bs{\eta})
	\ = \ D\bs{F}(\G(\bs{\eta}))^{-1}
	\ = \ D^2M(\G(\bs{\eta}))^{-1} .
\] 
\end{lem}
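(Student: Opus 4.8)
The plan is to treat the two halves of the lemma separately: first the $\cC^\infty$-smoothness of $M$ together with the integral formulas for $DM$ and $D^2M$, then the injectivity of $\F = DM$, the openness of $\F(\Theta)$, and the formula for $D\G$.

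For the first half I would argue by differentiation under the integral sign. Fix $\btheta_0 \in \Theta$. Since $\Theta$ is open I can choose $\rho > 0$ with $\btheta_0 \pm \rho\,\e_j \in \Theta$ for $j = 1,\dots,p$ and a ball $B(\btheta_0,\delta) \subset \Theta$ with $\delta$ so small that $\delta\sqrt p < \rho$. The two elementary bounds needed are: (i) for each $k \in \N_0$ there is a constant $C_k$ with $x^k \le C_k e^{(\rho - \delta\sqrt p)x}$ for all $x \ge 0$; and (ii) $e^{a\|T(\z)\|} \le \sum_{j=1}^p \bigl( e^{a\sqrt p\, T_j(\z)} + e^{-a\sqrt p\, T_j(\z)} \bigr)$ for $a \ge 0$, using $\|T\| \le \sqrt p\,\|T\|_\infty$ and $e^{a|t|} \le e^{at}+e^{-at}$. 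Combining these, for $\btheta \in B(\btheta_0,\delta)$ and any partial derivative of order $k$ the integrand $\partial^{\balpha}_{\btheta}\exp(\btheta^\top T(\z))$ has modulus at most $\|T(\z)\|^k \exp(\btheta^\top T(\z))$, which in turn is bounded — uniformly in $\btheta \in B(\btheta_0,\delta)$ — by a finite linear combination of the functions $\z \mapsto \exp\bigl( (\btheta_0 \pm \rho\,\e_j)^\top T(\z) \bigr)$, each $\mu$-integrable since $M$ is finite on all of $\Theta$. The classical theorem on differentiation under the integral then yields $M \in \cC^\infty(B(\btheta_0,\delta))$ with $DM(\btheta) = \int T(\z)\exp(\btheta^\top T(\z))\,\mu(d\z)$ and $D^2M(\btheta) = \int T(\z)T(\z)^\top\exp(\btheta^\top T(\z))\,\mu(d\z)$ (and analogous multilinear formulas of every order); as $\btheta_0 \in \Theta$ was arbitrary, the first assertion follows. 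Alternatively, one may simply invoke standard results on Laplace transforms / exponential families here.

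For the second half, observe that $D^2M(\btheta)$ is symmetric positive semidefinite, and in fact positive definite under the stated hypothesis: for $\b \ne \bs{0}$,
\[
	\b^\top D^2M(\btheta)\b \ = \ \int \bigl( \b^\top T(\z) \bigr)^2 \exp(\btheta^\top T(\z)) \, \mu(d\z) \ > \ 0 ,
\]
since the integrand is strictly positive on $\{\z : \b^\top T(\z) \ne 0\}$, a set of positive $\mu$-measure. Hence $M$ is strictly convex on the convex set $\Theta$, and $\F = DM$ is injective, because by convexity and the fundamental theorem of calculus $(\btheta_2-\btheta_1)^\top\bigl( \F(\btheta_2)-\F(\btheta_1) \bigr) = \int_0^1 (\btheta_2-\btheta_1)^\top D^2M\bigl( \btheta_1 + t(\btheta_2-\btheta_1) \bigr)(\btheta_2-\btheta_1)\,dt > 0$ whenever $\btheta_1 \ne \btheta_2$. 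Since $D\F(\btheta) = D^2M(\btheta)$ is invertible for every $\btheta \in \Theta$, the inverse function theorem shows $\F$ is a local $\cC^\infty$-diffeomorphism; in particular $\F$ is an open map, so $\F(\Theta)$ is open and $\F : \Theta \to \F(\Theta)$ is a bijection whose inverse $\G = \F^{-1}$ is infinitely differentiable, in particular twice continuously differentiable. Differentiating the identity $\F(\G(\bs{\eta})) = \bs{\eta}$ and applying the chain rule gives $D\F(\G(\bs{\eta}))\,D\G(\bs{\eta}) = \I_p$, i.e.\ $D\G(\bs{\eta}) = D\F(\G(\bs{\eta}))^{-1} = D^2M(\G(\bs{\eta}))^{-1}$.

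The step I expect to be the main obstacle is the domination argument behind the first half: one must exhibit a single $\mu$-integrable majorant valid uniformly on a whole neighborhood of $\btheta_0$ and for the order of differentiation at hand, and this is precisely where openness and convexity of $\Theta$ together with finiteness of $M$ throughout $\Theta$ enter. Everything afterwards — positive definiteness of $D^2M$, strict convexity, injectivity, openness via the inverse function theorem, and the chain-rule identity for $D\G$ — is routine.
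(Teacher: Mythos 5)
Your proof takes essentially the same route as the paper's for the substantive second half: positive definiteness of $D^2M(\btheta)$ via the quadratic-form integral, injectivity of $\F = DM$ by integrating the Hessian along the segment joining two points with equal image, openness via the inverse function theorem, and the chain-rule identity $D\G = (D\F\circ\G)^{-1}$. Two minor remarks. First, the paper simply cites real-analyticity of $M$ as well-known, whereas you sketch a domination argument; that is a reasonable elaboration, but the constants you write down do not quite close: combining bound (i) with exponent $\rho - \delta\sqrt{p}$, the extra factor $\exp(\delta\lVert T(\z)\rVert)$ coming from $\exp((\btheta-\btheta_0)^\top T(\z))$, and bound (ii) would require $(\rho - \delta\sqrt{p} + \delta)\sqrt{p} \le \rho$, which fails for $p>1$ under the hypothesis $\delta\sqrt{p} < \rho$. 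The fix is only bookkeeping: choose $\delta$ small enough that $(\delta + \epsilon)\sqrt{p} < \rho$ for some $\epsilon>0$, use $x^k \le C_k e^{\epsilon x}$, and proceed as you intended. Second, for twice continuous differentiability of $\G$ you invoke the $\cC^\infty$ version of the inverse function theorem (so $\F$ is a local $\cC^\infty$-diffeomorphism), whereas the paper applies the $\cC^1$ inverse function theorem and then separately shows $D\G = (D\F\circ\G)^{-1}$ is $\cC^1$ by proving that $\A \mapsto \A^{-1}$ is continuously differentiable via the von Neumann series; both are correct, yours being more direct if one grants the stronger theorem and theirs being more self-contained.
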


\begin{proof}
The fact that $M$ is real-analytic and the explicit formulae for its gradient and Hessian matrix are well-known. Maybe less familiar is injectivity of $\F = DM$ in case of $\mu \circ T^{-1}$ not being concentrated on a proper linear subspace of $\R^p$. Indeed, note first that the Jacobian matrix of $\F$ is given by $D\F(\btheta) = D^2M(\btheta)$. This is symmetric and positive definite, because
\[
	\b^\top D\F(\btheta)\b
	\ = \ \int (\b^\top T(\z))^2 \exp(\btheta^\top T(\z) \, \mu(d\z)
	\ > \ 0
\]
for $\b \ne \bs{0}$. Now suppose that $\F(\btheta_0) = \F(\btheta_1)$ for $\btheta_0, \btheta_1 \in \Theta$. Then, with $\btheta_t := (1 - t)\btheta_0 + t\btheta_1 \in \Theta$ for $t \in [0,1]$,
\[
	0 \ = \ (\btheta_1 - \btheta_0)^\top \bigl( \F(\btheta_1) - \F(\btheta_0) \bigr)
	\ = \ (\btheta_1 - \btheta_0)^\top
		\int_0^1 \frac{d}{dt} \F(\btheta_t) \, dt
	\ = \ \int_0^1 (\btheta_1 - \btheta_0)^\top
		D\F(\btheta_t) (\btheta_1 - \btheta_0) \, dt ,
\]
whence $\btheta_1 - \btheta_0 = \bs{0}$.

That $\F(\Theta)$ is open, and that $\G = \F^{-1}$ is also continuously differentiable on $\F(\Theta)$ with $D\G = (D\F \circ \G)^{-1}$ is a direct consequence of the classical inverse function theorem from differential calculus. Since $D\F$ and $\G$ are continuously differentiable, and since the mapping $\A \mapsto \A^{-1}$ is continuously differentiable on the space of nonsingular matrices in $\R^{p\times p}$, the Jacobian $D\G = (\D\F \circ \G)^{-1}$ is continuously differentiable, so $\G$ is twice continuously differentiable.

Continuous differentiability of $\A \mapsto \A^{-1}$ follows essentially from von Neumann's series expansion. For fixed nonsingular $\A \in \R^{p\times p}$,
\begin{equation}
\label{eq:vonNeumann}
	(\A + \Delta)^{-1}
	\ = \ (\I_p + \A^{-1}\Delta)^{-1} \A^{-1}
	\ = \ \sum_{k=0}^\infty (-1)^k (\A^{-1}\Delta)^k \A^{-1}
	\ = \ \A^{-1} - \A^{-1}\Delta\A^{-1} + O(\|\Delta\|_F^2)
\end{equation}
as $\R^{p\times p} \ni \Delta \to \bs{0}$. This expansion shows that $\A \mapsto \A^{-1}$ is continuously differentiable with Jacobian operator $\Delta \mapsto - \A^{-1}\Delta\A^{-1}$ at $\A$.
\end{proof}

\subsection{On the tails of integrable smooth functions}
\label{app:tails.of.smooth.f}

In this section we prove a curious result about the tail behaviour of integrable and smooth functions.

\begin{thm}
\label{thm:smooth.tails}
For an integer $m \ge 0$, let $f \in \LL^1(\R^d) \cap \cC^m(\R^d)$ such that each partial derivative $f^{(\balpha)}$ with $|\balpha| = m$ is uniformly continuous on $\R^d$. Then for any $\bgamma \in \N_0^d$ with $|\bgamma| \le m$,
\[
	|f^{(\bgamma)}(\x)| \to 0
	\quad\text{as} \ \lVert\x\rVert \to \infty .
\]
\end{thm}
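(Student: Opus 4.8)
The plan is to argue by induction on $m$. The case $m = 0$ is the genuinely new statement; the higher-order cases then follow by applying it to the top-order derivatives. So first I would prove: if $f \in \LL^1(\R^d)$ is uniformly continuous, then $f(\x) \to 0$ as $\lVert\x\rVert \to \infty$. Suppose not. Then there is an $\eps > 0$ and a sequence $(\x_k)$ with $\lVert\x_k\rVert \to \infty$ and $|f(\x_k)| \ge \eps$. By uniform continuity, choose $\delta > 0$ so that $|f(\y) - f(\x)| < \eps/2$ whenever $\lVert\y - \x\rVert \le \delta$; hence $|f| \ge \eps/2$ on each ball $B(\x_k,\delta)$. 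Passing to a subsequence, I may assume the balls $B(\x_k,\delta/2)$ are pairwise disjoint (possible because $\lVert\x_k\rVert \to \infty$), and then
\[
	\int_{\R^d} |f| \ \ge \ \sum_{k} \int_{B(\x_k,\delta/2)} |f|
	\ \ge \ \sum_k \frac{\eps}{2} \, \omega_d (\delta/2)^d \ = \ \infty ,
\]
where $\omega_d$ is the volume of the unit ball, contradicting $f \in \LL^1(\R^d)$. This settles $m = 0$.

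For the inductive step, suppose the claim holds for $m - 1$ (for \emph{all} such functions), and let $f \in \LL^1(\R^d) \cap \cC^m(\R^d)$ with every $f^{(\balpha)}$, $|\balpha| = m$, uniformly continuous. The key observation is that for each coordinate $j$, the partial derivative $\partial_j f \in \cC^{m-1}(\R^d)$ and all of \emph{its} partial derivatives of order $m-1$ are among the $f^{(\balpha)}$ with $|\balpha| = m$, hence uniformly continuous. So to invoke the inductive hypothesis on $\partial_j f$ I only need $\partial_j f \in \LL^1(\R^d)$, and this is the point where I expect the real work to be. I would establish it via the $m = 0$ case applied to $f$ itself: since $f$ and its first-order derivatives are (uniformly) continuous — the first-order derivatives being of order $\le m$, hence either uniformly continuous by hypothesis or, if $m \ge 2$, continuous with uniformly continuous derivatives and thus eligible for an earlier stage of the induction — one shows $f$ and $\partial_j f$ vanish at infinity; combined with a one-dimensional argument along lines parallel to the axes (integrating $\partial_j f$ and using that $f$ has limit $0$ in both directions), one deduces $\partial_j f$ is absolutely integrable. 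Concretely, I would argue $\int_{\R^d} |\partial_j f(\x)| \, d\x < \infty$ by showing $x_j \mapsto f(\x)$ has bounded variation on each line with a variation bound integrable over the remaining coordinates, which follows once $f \to 0$ at infinity and $\partial_j f$ is eventually of one sign is \emph{not} automatic — so instead I would use the cleaner route below.

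The cleanest route, and the one I would actually carry out, avoids proving integrability of derivatives altogether: induct differently. Prove by induction on $|\bgamma|$ that $f^{(\bgamma)} \to 0$ at infinity, for $|\bgamma| \le m$. The base case $\bgamma = \bs 0$ is handled above once we know $f$ is uniformly continuous — which it is, because $f \in \cC^1_b$ is not given, but $f \in \cC^m$ with bounded top derivatives and $f \in \LL^1$; by the $m=0$ argument applied inductively to lower derivatives, all $f^{(\bgamma)}$ with $|\bgamma| < m$ are bounded (they vanish at infinity, hence are bounded on $\R^d$ by continuity), so $f$ is Lipschitz, in particular uniformly continuous. For the inductive step from $|\bgamma| = \ell$ to $|\bgamma| = \ell + 1 \le m$, write $g := f^{(\bgamma)}$ with $|\bgamma| = \ell$; then $g \to 0$ at infinity, $g \in \cC^{m-\ell}$, and $\partial_j g = f^{(\bgamma + \e_j)}$ together with its own derivatives up to order $m - \ell - 1$ are all bounded (being lower-order derivatives of $f$, which vanish at infinity). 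The statement I need is the following one-dimensional fact: if $g : \R \to \R$ is $\cC^1$ with $g \to 0$ at $\pm\infty$ and $g'$ uniformly continuous (equivalently here, $g''$ bounded, or just $g'$ bounded and uniformly continuous), then $g' \to 0$ at $\pm\infty$. This is classical — apply it coordinatewise: fix all coordinates but the $j$-th, let $t \mapsto \phi(t) := g(\x + t\e_j)$; then $\phi \to 0$ at $\pm\infty$ and $\phi'$ is uniformly continuous, so $\phi'(0) = \partial_j g(\x) \to 0$; with care the convergence is uniform in the frozen coordinates along any sequence escaping to infinity, giving $\partial_j g(\x) \to 0$ as $\lVert\x\rVert\to\infty$. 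The main obstacle is exactly this last uniformity: one must run the one-dimensional mean-value / Landau-type inequality $|\phi'(0)|^2 \le 4 \lVert\phi\rVert_\infty \lVert\phi''\rVert_\infty$ on a short interval around the escaping point, using the \emph{global} bound on $\lVert\phi''\rVert_\infty = \lVert f^{(\bgamma+2\e_j)}\rVert_\infty$ (finite since $|\bgamma + 2\e_j|$ may exceed $m$ — here one instead uses uniform continuity of $\phi'$ directly rather than a bound on $\phi''$), and the fact that $\lVert\phi\rVert_{\infty}$ restricted to a neighborhood of the escaping point tends to $0$. That handles all $|\bgamma| \le m$ and completes the proof.
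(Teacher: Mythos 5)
Your $m=0$ case matches the paper's argument, but the higher-order part of your proposal has a genuine gap. The bottom-up induction on $|\bgamma|$ (from $0$ up to $m$) is circular already at the base case $\bgamma = \bs{0}$: to apply the $m=0$ argument you need $f$ itself to be uniformly continuous, and you obtain this by asserting that all $f^{(\bgamma)}$ with $|\bgamma| < m$ ``vanish at infinity, hence are bounded'' --- but that is exactly the conclusion you are trying to establish. The hypotheses only give uniform continuity of the order-$m$ derivatives, and a uniformly continuous function can be unbounded (e.g.\ $\z \mapsto z_1$), so nothing bounds the lower-order derivatives a priori. The inductive step has the same defect: the one-dimensional Landau-type argument you invoke for $g = f^{(\bgamma)}$ needs either a global bound on $f^{(\bgamma + 2\e_j)}$ or uniform continuity of $f^{(\bgamma + \e_j)}$, and when $|\bgamma| + 1 < m$ neither is given by hypothesis nor covered by the inductive hypothesis, which only controls orders $\le |\bgamma|$. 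Your first sketch (showing $\partial_j f \in \LL^1$ and applying the theorem inductively) runs into the same wall, as you yourself note before abandoning it.

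The paper evades all of this by running the induction in the opposite direction and putting the hard work at the \emph{top} order. Suppose some directional derivative $D^m f(\cdot;\v)$ fails to vanish along a sequence of points escaping to infinity. Uniform continuity of the order-$m$ derivatives gives a box around each such point on which $|D^m f(\cdot;\v)| \ge \varepsilon$. One then integrates downward: on each line segment in direction $\v$, since $D^m f$ has definite sign there, the antiderivative $D^{m-1}f$ increases by at least $\varepsilon\delta$ across the interval, so on one of the two halves of the interval $|D^{m-1}f| \ge \varepsilon\delta/2$; iterating this descent $m$ times produces sub-rectangles of fixed positive volume on which $|f| \ge \varepsilon_0 > 0$, contradicting $f \in \LL^1(\R^d)$. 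Crucially this integration-descent needs no a priori bound on the intermediate derivatives. Once the order-$m$ derivatives are known to vanish (and hence be bounded), the order-$(m-1)$ derivatives are Lipschitz, so the hypotheses hold with $m-1$ in place of $m$ and one bootstraps downward. That integration-descent is the missing idea in your proposal.
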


Before proving this theorem, we collect a few basic facts about partial and directional derivatives. For an integer $m \ge 1$, the $m$-th directional derivative of a function $f \in \cC^m(\R^d)$ at a point $\x \in \R^d$ in direction $\v \in \R^d$ equals
\[
	D^m f(\x; \v) \defeq \left. \frac{d^m}{dt^m} \right\vert_{t=0}
		f(\x + t\v) .
\]
This may be generalized as follows: For $\v_1,\ldots,\v_m \in \R^d$,
\begin{displaymath}
  D^m f(\x; \v_1, \ldots, \v_m)
  \defeq \left. \frac{\partial^m}{\partial t_1 \cdots \partial
      t_m} \right\vert_{t_1 = \cdots =t_m = 0} f(\x + t_1 \v_1 + \cdots t_m \v_m) ,
\end{displaymath}
so
\[
	D^m f(\x; \v) = D^m f(\x; \underbrace{\v,\ldots,\v}_{m \ \text{times}}) .	
\]
For fixed $\x$, $D^m f(\x; \cdots)$ is a symmetric $m$-linear form on $\R^d$. Indeed, by induction on $m$ one can easily show that for $\v_j = (v_{s,j})_{s=1}^d$,
\[
	D^m f(\x; \v_1, \ldots, \v_m)
	\ = \ \sum_{s(1),\ldots,s(m)=1}^d
		f_{}^{(\bgamma[s(1),\ldots,s(m)])}(\x)
		\prod_{j=1}^m v_{s(j),j}
\]
with $\bgamma[s(1),\ldots,s(m)] = \e_{s(1)} + \cdots + \e_{s(m)}$. This proves $m$-linearity, and symmetry is a consequence of $\bgamma[\cdots]$ being symmetric in its $m$ arguments. For the simple $m$-th directional derivative we have the simplified formula
\begin{equation}
	D^m(\x; \v) \ = \ m! \sum_{|\bgamma| = m}
		\frac{f_{}^{(\bgamma)}(\x)}{\bgamma!} \v_{}^\bgamma .
\end{equation}
Consequently, for any $\v \in \R^d$, one may express $D^m f(\x; \v)$ as a linear combination of the partial derivatives $f^{(\bgamma)}(\x)$ with coefficients depending only on $\v$.

Interestingly, one can also represent any partial derivative $f^{(\bgamma)}(\cdot)$ of order $m$ as a linear combination of finitely many directional derivatives $D^m f(\cdot; \v)$. This follows from the so-called polarization identity for symmetric multilinear forms, see \citet{Mazur1934}. In our setting, for arbitrary $\v_1,\ldots,\v_m \in \R^d$,
\begin{equation}\label{eq:polarization}
	D^mf(\x; \v_1, \ldots, \v_m)
	\ = \ \frac{1}{m!} \sum_{J \subset \{1, \ldots, m\}}
		(-1)^{m - \#J} D^m f \Bigl( \x; \sum_{j \in J} \v_j \Bigr) .
\end{equation}

These considerations have the following consequences:

\begin{lem}
\label{lem:partial.directional}
Let a function $f \in \cC^m(\R^d)$, $m \ge 1$.
\smallskip

\noindent
\textbf{(a)} The following two properties are equivalent:\\[0.5ex]
(a.1) \ For any $\bgamma \in \N_0^d$ with $|\bgamma| = m$, the partial derivative $f^{(\bgamma)}$ is uniformly continuous.\\
(a.2) \ For any fixed $\v \in \R^d$, the $m$-th directional derivative $D^m f(\cdot; \v)$ is uniformly continuous.

\smallskip
\noindent
\textbf{(b)} Moreover, the following two properties are equivalent:\\[0.5ex]
(b.1) \ For any $\bgamma \in \N_0^d$ with $|\bgamma| = m$,
\[
	f^{(\bgamma)}(\x) \ \to \ 0 \quad\text{as} \ \|\x\| \to \infty .
\]
(b.2) \ For any fixed $\v \in \R^d$,
\[
	D^m f(\x; \v) \ \to \ 0
	\quad\text{as} \ \|\x\| \to \infty .
\]
\end{lem}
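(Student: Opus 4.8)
The plan is to derive both equivalences from the two algebraic representations already recorded before the lemma, combined with the elementary stability facts that a finite linear combination with constant coefficients of uniformly continuous functions is uniformly continuous, and that such a combination of functions vanishing at infinity again vanishes at infinity. Thus the whole argument reduces to exhibiting each relevant function as such a finite linear combination of the others, and there is no need to treat parts (a) and (b) separately: the same two identities handle both, one invoking the closure property for uniform continuity, the other the closure property for vanishing at infinity.

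For the implications (a.1)$\Rightarrow$(a.2) and (b.1)$\Rightarrow$(b.2), I would fix $\v \in \R^d$ and simply invoke the identity $D^m f(\x;\v) = m! \sum_{|\bgamma| = m} (\v^\bgamma/\bgamma!)\, f^{(\bgamma)}(\x)$. The right-hand side is a finite sum, with the $\v$-dependent but $\x$-independent coefficients $m!\,\v^\bgamma/\bgamma!$, of the partial derivatives $f^{(\bgamma)}$ with $|\bgamma| = m$. Hence the property assumed in (a.1), respectively (b.1), for all such $f^{(\bgamma)}$ is inherited by $D^m f(\cdot;\v)$.

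For the converse implications (a.2)$\Rightarrow$(a.1) and (b.2)$\Rightarrow$(b.1), I would fix $\bgamma$ with $|\bgamma| = m$, write $\bgamma = \e_{s(1)} + \cdots + \e_{s(m)}$ by listing each coordinate $i$ with multiplicity $\gamma_i$, so that $f^{(\bgamma)}(\x) = D^m f(\x;\e_{s(1)},\ldots,\e_{s(m)})$, and then apply the polarization identity \eqref{eq:polarization} to rewrite this as $\tfrac{1}{m!} \sum_{J \subset \{1,\ldots,m\}} (-1)^{m-\#J} D^m f\bigl(\x;\sum_{j\in J}\e_{s(j)}\bigr)$, noting that the term for $J = \emptyset$ equals $D^m f(\x;\bs{0}) = 0$. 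This displays $f^{(\bgamma)}$ as a finite linear combination, with constant coefficients, of simple $m$-th directional derivatives in the fixed directions $\sum_{j\in J}\e_{s(j)}$, so the desired property transfers from (a.2), respectively (b.2), back to $f^{(\bgamma)}$.

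The argument involves no serious obstacle; it is essentially a translation exercise built on the two identities. The one step needing a little care is the bookkeeping around polarization — checking that every partial derivative of order $m$ is of the form $D^m f(\cdot;\e_{s(1)},\ldots,\e_{s(m)})$ for a suitable index list, that the polarization sum is genuinely finite (it has $2^m$ terms), and that the empty-set term drops out harmlessly. Since these considerations and the two closure facts for finite linear combinations hold for every $\v$ and every $\bgamma$ with $|\bgamma| = m$, the two equivalences as stated follow.
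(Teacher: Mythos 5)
Your proof is correct and follows exactly the route the paper intends: the paper states Lemma~\ref{lem:partial.directional} as a direct consequence of the two identities preceding it (the expansion of $D^m f(\x;\v)$ as a finite linear combination of the $f^{(\bgamma)}$ with coefficients $m!\,\v^\bgamma/\bgamma!$, and the polarization identity \eqref{eq:polarization} combined with the observation $f^{(\bgamma)} = D^m f(\cdot;\e_{s(1)},\ldots,\e_{s(m)})$), and your write-up makes these two reductions explicit, together with the closure of uniform continuity and of vanishing at infinity under finite linear combinations with constant coefficients. The bookkeeping you flag — the empty-set term in the polarization sum drops out because $D^m f(\x;\bs{0})=0$ for $m\ge 1$, and the sum is finite — is handled correctly.
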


\begin{proof}[\bf Proof of Theorem~\ref{thm:smooth.tails}]
In case of $m = 0$, we have to show that $f(\x) \to 0$ as $\|\x\| \to \infty$. Suppose that this is wrong. Then there exists a sequence $(\x_n)_{n\in\N}$ in $\R^d$ and a number $\varepsilon > 0$ such that $\lim_{n \to \infty} \|\x_n\| = \infty$ and $|f(\x_n)| \ge 2\eps$ for all $n \in \N$. By uniform continuity of $f$, there exists a number $\delta$ such that $|f(\x)| \ge \varepsilon$ for all $n \in \N$ and any $\x$ in the rectangle $B_n \defeq \x_n + C^d$, where $C \defeq [-\delta,\delta]$. Without loss of generality, let the rectangles $B_n$, $n \in \N$, be pairwise disjoint. Then
\[
	\int |f(\x)| \, d\x
	\ \ge \ \sum_{n\in\N} \int_{B_n} |f(\x)| \, d\x
	\ \ge \ \sum_{n\in\N} \delta^d \varepsilon
	\ = \ \infty ,
\]
a contradiction to $f \in \LL^1(\R^d)$.

Now we consider an integer $m \ge 1$. We first show that
\begin{equation}
\label{eq:vanishing-partial-derivatives}
	f^{(\bgamma)}(\x) \ \to \ 0
	\quad\text{as} \ \|\x\| \to \infty
	\quad\text{whenever} \ |\bgamma| = m .
\end{equation}
According to Lemma~\ref{lem:partial.directional}, this is equivalent to
saying that for any fixed $\v \in \R^d$,
\begin{equation}
\label{eq:vanishing-directional-derivatives}
	D^m f(\x; \v) \ \to \ 0
	\quad\text{as} \ \|\x\| \to \infty .
\end{equation}

\smallskip
\noindent
\textbf{Step 0.} Assume that \eqref{eq:vanishing-directional-derivatives} is
wrong. Then there exist $\v \in \R^d$, $\varepsilon > 0$ and a
series $(\x_n)_{n \in \N}$ in $\R^d$ such that
\begin{displaymath}
  \lVert \x_n \rVert \rightarrow \infty \quad \text{and} \quad \lvert
  D^kf(\x_n; \v) \rvert \geq 2 \varepsilon \quad \text{for all} \ n.
\end{displaymath}
After a suitable linear transformation of the coordinate system we may
assume w.l.o.g. that $\v = \e_1$. By uniform continuity of $D^mf(\x;\e_1)$,
there exists a $\delta > 0$ such that
\begin{displaymath}
	\lvert D^m(\x; \e_1) \rvert \geq \varepsilon
	\quad\text{for all} \ n \in \N \ \text{and} \ \x \in B_n ,
\end{displaymath}
where $B_n = \x_n + C^d$ with $C = [-\delta,\delta]$ as before. Again, we may assume w.l.o.g.\ that these sets $B_n$, $n \in \N$ are pariwise disjoint.

\smallskip
\noindent
\textbf{Step 1 (induction step).} Suppose that for some $k \in \{1,\ldots,m\}$ the following claim is true:
For given numbers $\varepsilon_k > 0$ and $\delta_k \in (0,\delta]$, for all $n \in \N$ and $\y \in C^{d-1}$, there exist $\xi_{n,k}(\y) \in \{-1,1\}$ and an interval $C_{n,k}(\y) \subset C$ of length $2\delta_k$ such that
\begin{displaymath}
	\xi_{n,k}(\y) D^k f \bigl( \x_n + (t,\y); \e_1 \bigr)
	\ \ge \ \varepsilon_k
	\quad\text{for all} \ t \in C_{n,k}(\y) .
\end{displaymath}
Then the same claim is true with $k-1$ in place of $k$, where $\varepsilon_{k-1} = \varepsilon_k \delta_k/2$ and $\delta_{k-1} = \delta_k/4$.

To verify this, note that for $n \in \N$, $\y \in C^{d-1}$ and $u,v \in C_{n,k}(\y)$ with $u < v$,
\begin{align*}
	\xi_{n,k}(\y) &\bigl(
		D^{k-1}f(\x_n + (v, \y), \e_1) - D^{k-1}f(\x_n + (u,\y), \e_1) \bigr) \\
	&= \xi_{n,k}(\y)
		\int_u^v D^kf\bigl( \x_n + (t, \y), \e_1 \bigr) \, dt
	\geq \varepsilon_k (v-u) .
\end{align*}
In particular, let $s$ be the midpoint of $C_{n,k}(\y)$, i.e.\ $C_{n,k}(\y) = [s \pm \delta_k]$. Then, if $\xi_{n,k}(\y) D^{k-1}f(\x_n + (s,\y), \e_1) \geq 0$, we have
\begin{displaymath}
	\xi_{n,k}(\y) D^{k-1}f\bigl( \x_n + (t, \y), \e_1 \bigr)
	\geq \varepsilon_k \delta_k/2
		\quad\text{for} \ t \in [s + \delta_k/2, s + \delta_k] .
\end{displaymath}
And if $\xi_{n,k}(\y) D^{k-1}f(\x_n + (s,\y), \e_1) \leq 0$, then
\begin{displaymath}
	- \xi_{n,k}(\y) D^{k-1}f\bigl( \x_n + (t, \y), \e_1 \bigr)
	\geq \varepsilon_k \delta_k/2
		\quad\text{for} \ t \in [s - \delta_k, s - \delta_k/2] .
\end{displaymath}

\smallskip
\noindent
\textbf{Step~2.}
As shown in Step~0, the claim in Step~1 is true for $k = m$. Then we may repeat Step~2 $m$ times to find that the same claim is true for $k = 0$ with certain numbers $\varepsilon_0 > 0$ and $\delta_0 \in (0,\delta]$, where $D^0f(\x; \e_1) \defeq f(\x)$. But then we may apply Fubini's theorem and conclude that
\[
	\int |f(\x)| \, d\x
	\ \ge \ \sum_{n \in \N}
		\int_{C^{d-1}} \int_{C_{n,0}} \bigl| f(\x_n + (t,\y)) \bigr| \, dt \, d\y
	\ \ge \ \sum_{n \in \N} \delta^{d-1} 2 \delta_0 \varepsilon_0
	\ = \ \infty .
\]
This contradicts our assumption that $f$ is integrable, whence \eqref{eq:vanishing-directional-derivatives} is true.

Having verified \eqref{eq:vanishing-partial-derivatives}, we know that $f \in \LL^1(\R^d) \cap \cC_b^m(\R^d)$. In particular, all partial derivatives $f^{(\bgamma)}$ of order $|\bgamma| = m-1$ are Lipschitz-continuous. Consequently, the previous arguments may be repeated with $m-1$ in place of $m$, and inductively we arrive at the assertion for $m = 0$, which has been treated already.
\end{proof}

\end{document}